\newtheorem{defi}{Definition}[section] 
\newtheorem{rmq}{Remark}[section]  
\newtheorem{theo}{Theorem}[section]  
\newtheorem{lem}{Lemma}[section]
\newtheorem{cor}{Corollary}[section]
\newcommand{\limite}[2]{\xrightarrow[#1]{#2}}
\newcommand{\Rlogo}{\protect\includegraphics[height=1.8ex,keepaspectratio]{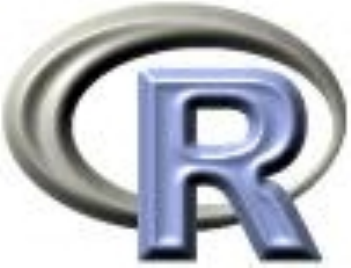}}
\def\bkN{\mathbb{N}}
\def\pa#1{\left( #1 \right)}
\newcommand\1{\leavevmode\hbox{\rm \small1\kern-0.35em\normalsize1}}
\newcommand\ind[1]{\1_{\{#1\}}}
\def\pa#1{\left( #1 \right)}
\def\build#1_#2^#3{\mathrel{
\mathop{\kern 0pt#1}\limits_{#2}^{#3}}}
\def\tend_#1^#2{\mathrel{
\mathop{\kern 0pt\longrightarrow}\limits_{#1}^{#2}}}
\def\bkR{\mathbb{R}}
\begin{document}
\title{An efficient  Averaged Stochastic Gauss-Newton algorithm for estimating parameters of non linear regressions models}

\author{Peggy \textsc{C\'enac}$^{(*)}$, Antoine \textsc{Godichon-Baggioni}$^{(**)}$, Bruno \textsc{Portier}$^{(***)}$ \\ $^{(*)}$Institut de Math\'ematiques de Bourgogne, Universit\'e de Bourgogne, \\
$^{(**)}$Laboratoire de Probabilités, Statistique et Modélisation, Sorbonne Université \\
$^{(***)}$ Laboratoire de Mathématiques de l'INSA, INSA Rouen-Normandie 
} 
\date{}
\maketitle
\begin{abstract}
Non linear regression models are a standard tool for modeling real phenomena, with several applications in machine learning, ecology, econometry... Estimating the parameters of the model has garnered a lot of attention during many years. We focus here on a recursive method for estimating parameters of non linear regressions. Indeed, these kinds of methods, whose most famous are probably the stochastic gradient algorithm and its averaged version, enable to deal efficiently with massive data arriving sequentially. Nevertheless, they can be, in practice, very sensitive to the case where the eigenvalues of the Hessian of the functional we would like to minimize are at different scales. To avoid this problem, we first introduce an online  Stochastic Gauss-Newton algorithm. In order to improve the estimates behavior in case of bad initialization, we also introduce a new Averaged Stochastic Gauss-Newton algorithm and prove its asymptotic efficiency.  
\end{abstract}

\section{Introduction}
We consider in this paper  nonlinear regression model of the form
\begin{equation}
\label{model}Y_n = f(X_n,\theta) + \varepsilon_n, \ \ n\in\bkN,
\end{equation}
where the observations $(X_n,Y_n)_n$ are independent random vectors in $\mathbb{R}^p\times \mathbb{R}$, $(\varepsilon_n)_n$ are independent, identically distributed zero-mean non observable random variables. Moreover, $f: \mathbb{R}^p\times \mathbb{R}^q \longrightarrow \mathbb{R}$ 
and $\theta \in \mathbb{R}^q$ is the unknown 
parameter to estimate.
\vspace{1ex}

Nonlinear regression models are a standard tool for modeling real phenomena 
with a complex dynamic. 
It has a wide range of applications including maching learning (\cite{bach2014adaptivity}), ecology (\cite{komori2016asymmetric}), econometry (\cite{varian2014big}), pharmacokinetics (\cite{Bates88}), epidemiology (\cite{suarez2017}) or biology (\cite{Giurcuaneanu05}).
Most of the time, the parameter  $\theta$ is estimated using
the least squares method, thus it is estimated by
\[ \widehat\theta_n = \arg\min_{h \in \bkR^q} \sum_{j=1}^n (Y_j - f(X_j, h))^2 \]
Many authors have studied the asymptotic behavior of the least squares estimator,
under various assumptions with different methods. 
For example, \cite{Jennrich69} consider the case when $\theta$ belongs to a compact set, 
$f$ is a continuous function with a unique extremum. 
\cite{Wu81} consider similar local assumptions. 
\cite{lai94, Skouras, Geer90, Geer96, Yao00} generalized the consistency results. 
Under a stronger assumption about the errors $\varepsilon$, 
that is, considering that the errors are uniformly subgaussian,
\cite{Geer90} obtained sharper stochastic bounds using empirical process methods. Under second moment assumptions on the errors and regularity assumptions (lipschitz conditions) on $f$, \cite{Pollard06} established weak consistency and a central
limit theorem. 
\cite{Yang} consider a compact set for $\theta$, 
under strong regularity assumptions and the algorithm allows to reach a stationary point without certainty that it is the one we are looking for.
They built hypothesis tests and confidence intervals.
Finally, let also mention \cite{Yao00}  
which considers the case of stable nonlinear autoregressive models
and establishes the strong consistency of the least squares estimator.
\vspace{1ex}

However, in practice, the calculation of the least square estimator is not explicit in most cases and therefore requires the implementation of a deterministic approximation algorithm. The second order Gauss-Newton algorithm is generally used  (or sometimes Gauss-Marquard algorithm to avoid inversion problems 
(see for example \cite{Bates88}).

 These algorithms are  therefore not adapted to the case where the data are acquired sequentially and at high frequencies. 
 In such a situation, stochastic algorithms
offer an interesting alternative. 
One example is the stochastic gradient algorithm,
defined by 
\begin{equation}
\theta_n = \theta_{n-1} - \gamma_n \nabla_\theta f(X_n, \theta_{n-1}) \pa{Y_n - f(X_n, \theta_{n-1})},
\end{equation}
where  $(\gamma_n)$ is a sequence of  positive real numbers decreasing towards 0. Thanks to its recursive nature, 
this algorithm does not require to store all the data and can be updated automatically when the data sequentially arrive. 

It is thus adapted to very large datasets of data arriving at high speed. 
We refer to \cite{robbins1951} and to its averaged version \cite{PolyakJud92} 
and \cite{ruppert1988efficient}. 
However, even if it is often used in practice as in the case of neural networks,
the use of stochastic gradient algorithms can lead to non satistying results. Indeed, for such models, it amounts to take the same step sequence for each coordinates. Nevertheless, as explained in \cite{BGBP2019}, if the Hessian of the function we would like to minimize has eigenvalues at different scales, 
this kind of "uniform" step sequences is not adapted.
\vspace{1ex}

In this paper, we propose an alternative strategy to stochastic gradient algorithms, 
in the spirit of the Gauss-Newton algorithm.
It is defined by :
\begin{eqnarray}
\phi_{n} & = & \nabla_\theta f(X_{n}, \theta_n)
\label{GNun} \\
S_{n}^{-1} &= & S_{n-1}^{-1} - (1 + \phi_{n}^T  S_n^{-1} \phi_{n})^{-1}
S_{n-1}^{-1} \phi_{n} \phi_{n}^T S_{n-1}^{-1} 
\label{GNdeux} \\
\theta_{n} &= &\theta_{n-1} + S_{n}^{-1}\phi_{n}\pa{Y_{n} -f(X_{n}, \theta_{n-1})}
\label{GNtrois}
\end{eqnarray}
where the initial value $\theta_0$ can be arbitrarily chosen and $S_0$ is a positive definite deterministic matrix,
typically $S_0 = I_q$ where $I_q$ denotes the identity matrix of order $q$.
Remark that thanks to Riccati's formula (see \cite[p. 96]{Duf97}, also called Sherman Morrison's formula (\cite{Sherman50}), $S_n^{-1}$ is the inverse of the matrix $S_{n}$ defined by
\[S_n = S_0 + \sum_{j=1}^{n}\phi_j \phi_j^T.\]

When the function $f$ is linear of the form $f(x,\theta) = \theta^T x$,
 algorithm (\ref{GNun})-(\ref{GNtrois})  rewrites as the standard recursive least-squares estimators
(see \cite{Duf97}) defined by:
\begin{eqnarray*}
S_n^{-1} &= & S_{n-1}^{-1} - (1 + X_n^T  S_{n-1}^{-1} X_n)^{-1}
S_{n-1}^{-1} X_nX_n^T S_{n-1}^{-1} \\
\theta_n &= &\theta_{n-1} + S_{n}^{-1}X_n\pa{Y_n - \theta_{n-1}^T X_n}  
\end{eqnarray*}
This algorithm can be considered as a Newton stochastic algorithm since the matrix $n^{-1} S_n$ is an estimate of the Hessian matrix of the least squares criterion.
\vspace{1ex}

To the best of our knowledge and apart from the least squares estimate mentioned above, 
second order stochastic algorithms are hardly ever used and studied since they often require 
the inversion of a matrix at each step, which can be very expensive in term of time calculation. 
To overcome this problem some authors (see for instance \cite{mokhtari2014res,lucchi2015variance,byrd2016stochastic})
use the BFGS (for \emph{Broyden-Fletcher-Goldfarb-Shanno}) algorithm
which is based on the recursive estimation of a matrix 
whose behavior is closed to the one of the inverse of the Hessian matrix. 
Nevertheless, this last estimate need a regularization of  the objective function,
leading to unsatisfactory estimation of the unknown parameter.
In a recent paper  dedicated to estimation of parameters in logistic regression models  \citep{BGBP2019}, 
the authors propose a truncated Stochastic Newton algorithm. This truncation opens the way for online stochastic Newton algorithm without necessity to penalize the objective function. 

In the same spirit of this work, and to relax assumption on the function $f$,
we  consider in fact a modified version  of the
 stochastic Gauss-Newton algorithm defined by \eqref{GNtrois}, 
that enables us to obtain the asymptotic efficiency of the estimates
in a larger  area of assumptions. 
In addition, we introduce the following new \emph{Averaged Stochastic Gauss-Newton} algorithm (ASN for short)
defined by
\begin{eqnarray}
\phi_n & = & \nabla_h f(X_n, \overline{\theta}_{n-1})
\label{AGNun} \\
S_n^{-1} &= & S_{n-1}^{-1} - (1 + \phi_n^T  S_{n-1}^{-1} \phi_n)^{-1}
S_{n-1}^{-1} \phi_n \phi_n^T S_{n-1}^{-1} 
\label{AGNdeux} \\
\theta_n &= &\theta_{n-1} + n^\beta S_n^{-1}\phi_n\pa{Y_n -f(X_n, \theta_{n-1})}
\label{AGNtrois} \\
\overline{\theta}_n &=& \overline{\theta}_{n-1}
\label{AGNquatre}  + \dfrac{1}{n}\,\pa{\theta_n - \overline{\theta}_{n-1}}
\end{eqnarray}
where $\beta\in (0, 1/2)$, $\overline{\theta}_0 = 0$.

The introduction of the term $n^\beta$ before the term $S_n^{-1}$  in (\ref{AGNtrois}) allows the algorithm to move quickly which enables to reduce the sensibility to a bad initialization.  
The averaging step allows to maintain an optimal asymptotic behavior. 
Indeed, under assumptions, we first give the rate of convergence of the estimates, before proving their asymptotic efficiency.
\vspace{1ex}

The paper is organized as follows.
Framework and algorithms are introduced in Section~\ref{sec:framework}.
In Section~\ref{sectiontheo}, we give the almost sure rates of convergence of the estimates and establish their asymptotic normality. A simulation study illustrating the interest of averaging is presented in Section~\ref{sectionsimu}. Proofs are postponed in Section~\ref{sectionproof} while some general results used in the proofs on almost sure rates of convergence for martingales  are given in Section~\ref{sectionmartingales}.


\section{Framework}
\label{sec:framework}
\subsection{The model}
Let us consider the non linear regression model of the form 
\[
Y_{n} = f \left( X_{n} , \theta \right) + \epsilon_{n}
\]
where $( X_n, Y_n,\epsilon_n)_{n\geq 1}$ is a sequence of independent and identically distributed random vectors in $\mathbb{R}^{p} \times \mathbb{R} \times \mathbb{R}$. 
Furthermore, for all $n$, $\epsilon_n$ is independent from $X_n$ 
and is a zero-mean random variable. 
In addition, the function $f$ is assumed to be almost surely twice  differentiable with respect to the second variable.
Under certain assumptions, $\theta$ is a local minimizer of the functional $G: \mathbb{R}^{q} \longrightarrow \mathbb{R}_{+}$ defined for all $h \in \mathbb{R}^{q}$ by
\[
G(h) = \frac{1}{2}\mathbb{E}\left[ \left( Y - f \left( X,h \right) \right)^{2} \right] =: \frac{1}{2} \mathbb{E}\left[ g(X,Y,h) \right] ,
\]
where $(X, Y, \epsilon)$ has the same distribution as $(X_1, Y_1, \epsilon_1)$. 
Suppose from now that the following assumptions are fulfilled:
\begin{itemize}
	\item[\textbf{(H1a)}] There is a positive constant $C$ such that for all $h \in \mathbb{R}^{q}$, 
	\[
	\mathbb{E}\left[ \left\| \nabla_h g\left( X,Y,h \right) \right\|^{2}\right] \leq C,
	\]
	\item[\textbf{(H1b)}] There is a positive constant $C''$ such that for all $h \in \mathbb{R}^{q}$,
	\[
	\mathbb{E}\left[ \left\| \nabla_{h} f \left( X, h \right) \right\|^{4} \right] \leq C''
	\]
\item[\textbf{(H1c)}] The matrix $L(h)$ defined for all $h \in \mathbb{R}^{q}$ by
\[
L(h) = \mathbb{E}\left[ \nabla_{h}f (X,h)\nabla_{h}f(X,h)^{T} \right] 
\]
is positive at $\theta$.
\end{itemize}

Assumption \textbf{(H1a)} ensures ifrst that the functional $G$ is Frechet differentiable for all $h \in \mathbb{R}^{q}$. Moreover, since $\epsilon$ is independent from $X$ and zero-mean,
\[
\nabla G (h) = \mathbb{E} \left[ \left( f(X,h) - f(X,\theta) \right) \nabla_{h} f \left( X,h \right) \right] .
\]
Then, $\nabla G(\theta) = 0$. 
Assumption \textbf{(H1b)} will be crucial to control the possible divergence of estimates of $L(\theta)$ as well as to give their rate of convergence. Finally, remark that thanks to assumption \textbf{(H1c)},  $L(\theta)$ is invertible.

\subsection{Construction of the Stochastic Gauss-Newton algorithm}
In order to estimate $\theta$, we propose in this work a new approach. 
Instead of using straight a Stochastic Newton algorithm based on the estimate of the Hessian of the functionnal $G$, we will substitute this estimate by an estimate of $L(\theta)$, imitating thereby the Gauss-Newton algorithm. This leads to an algorithm of the form
\begin{equation}
\label{SGNalgo}
\theta_{n+1} =\theta_{n} + \frac{1}{n+1}L_n^{-1}\left(Y_{n+1} - f\left( X_{n+1},\theta_{n}\right)\right)\nabla_{h}f\left( X_{n+1} , \theta_{n} \right),
\end{equation}
where 
\[L_{n}:=\frac{1}{n}\sum_{i=1}^{n} \nabla_{h}f \left( X_{i} , \theta_{i-1} \right) \nabla_{h}f \left( X_{i} , \theta_{i-1} \right)^{T} \]
is a natural recursive estimate of $L(\theta)$. Remark that supposing the functional $G$ is twice differentiable leads to 
\[
\nabla^{2}G(h) = \mathbb{E}\left[ \nabla_{h} f \left( X,h \right) \nabla_{h} f\left( X,h \right)^{T} \right] - \mathbb{E}\left[ \left( f(X,\theta) - f(X,h) \right) \nabla_{h}^{2}f(X,h) \right],
\]
and in particular, $ H = \nabla^{2}G(\theta) =L(\theta)$. Then, $L_{n}$ is also an estimate of $H$. The proposed algorithm can so be considered as a Stochastic Newton algorithm, but does not require an explicit formula for the Hessian of $G$. Furthermore, the interest of considering $L_{n}$  as an estimate of $H$ is that we can update recursively the inverse of $L_{n}$ thanks to the Riccati's formula. To obtain the convergence of such an algorithm, it should be possible to state the following assumption:
\begin{itemize}
	\item[\textbf{(H*)}] There is a positive constant $c$ such that for all $h \in \mathbb{R}^{q}$,
	\[
	\lambda_{\min} \left( L (h) \right) \geq c.
	\]
\end{itemize}
Nevertheless, this assumption consequently limits the family of functions $f$ we can consider.

\subsection{The algorithms}
In order to free ourselves from the restrictive previous assumption \textbf{(H*)}, 
we propose to estimate $\theta$ with the following Gauss-Newton algorithm.
\begin{defi}[Stochastic Gauss-Newton algorithm]
Let $(Z_n)_{n \geq 1}$ be a sequence of random vectors, independent and for all $n\geq 1$, $Z_n \sim \mathcal{N}(0,I_q)$. 
The Gauss-Newton algorithm is defined recursively for all $n \geq 0$ by
\begin{align}
\notag & \widetilde{\Phi}_{n+1} = \nabla_{h} f \left( X_{n+1},\widetilde{\theta}_{n} \right) \\
\label{defsgn}& \widetilde{\theta}_{n+1} = \widetilde{\theta}_{n} + \widetilde{H}_{n}^{-1}\widetilde{\Phi}_{n+1} \left( Y_{n+1} - f \left( X_{n+1},\widetilde{\theta}_{n} \right)\right) \\
\notag &  \widetilde{H}_{n+ \frac{1}{2}}^{-1} = \widetilde{H}_{n}^{-1} - \left( 1+ \frac{c_{\widetilde{\beta}}}{(n+1)^{\widetilde{\beta}}}Z_{n+1}^{T}\widetilde{H}_{n}^{-1}Z_{n+1} \right)^{-1} \frac{c_{\widetilde{\beta}}}{(n+1)^{\widetilde{\beta}}} \widetilde{H}_{n}^{-1} Z_{n+1}Z_{n+1}^{T}\widetilde{H}_{n}^{-1} \\
\notag & \widetilde{H}_{n+1}^{-1} = \widetilde{H}_{n+\frac{1}{2}}^{-1} - \left( 1+ \widetilde{\Phi}_{n+1}^{T} \widetilde{H}_{n+ \frac{1}{2}}^{-1} \hat{\Phi}_{n+1} \right)^{-1} \widetilde{H}_{n+ \frac{1}{2}}^{-1} \widetilde{\Phi}_{n+1}\widetilde{\Phi}_{n+1}^{T} \widetilde{H}_{n+ \frac{1}{2}}^{-1} ,
\end{align}
with $\widetilde{\theta}_{0}$ bounded, $\widetilde{H}_{0}^{-1}$ symmetric and positive, $c_{\widetilde{\beta}} \geq 0$ and $\widetilde{\beta} \in (0 , 1/2)$.
\end{defi}
Note that compared with the algorithm (\ref{SGNalgo}), 
matrix $(n+1)L_n$ has been replaced by matrix $\widetilde{H}_n$ defined by:
\[
\widetilde{H}_n = \widetilde{H}_0 + \sum_{i=1}^{n} \widetilde{\Phi}_i\widetilde{\Phi}_i^T 
+ \sum_{i=1}^{n}\frac{c_{\widetilde{\beta}}}{i^{\widetilde{\beta}}}Z_iZ_i^T.
\]
Matrix $\widetilde{H}_n^{-1}$ is iteratively computed thanks to
Riccati's inversion formula (see \cite[p.96]{Duf97}) which is applied twice: first  recursively inverse matrix $\widetilde{H}_{n+\frac{1}{2}} = \widetilde{H}_{n} +\frac{c_{\widetilde{\beta}}}{(n+1)^{\widetilde{\beta}}}Z_{n+1}Z_{n+1}^T$, 
then  matrix $\widetilde{H}_{n+1} = \widetilde{H}_{n+\frac{1}{2}} + \widetilde{\Phi}_{n+1} \widetilde{\Phi}_{n+1}^T$.
\vspace{1ex}
In fact, introducing this additional term enables to ensure, taking $c_{\beta} > 0$ that (see the proof of Theorem~\ref{consistency})
\[
\lambda_{\max} \left( \widetilde{H}_{n}^{-1} \right) = \mathcal{O}\left( n^{1-\widetilde{\beta}} \right) \quad \mbox{a.s.}
\]
Therefore, it enables to control the possible divergence of the estimates of the inverse of the Hessian and to obtain convergence results without assuming  \textbf{(H*)}.
Anyway, if assumption \textbf{(H*)} is verified, one can take $c_{\widetilde{\beta}} = 0$ and Theorem \ref{theothetatilde} remains true. Remark that considering the Gauss-Newton algorithm amounts to take a step sequence of the form $\frac{1}{n}$. Nevertheless, in the case of stochastic gradient descents, it is well known in practice that this can lead to non sufficient results with a bad initialization. In order to overcome this problem, we propose an Averaged Stochastic Gauss-Newton algorithm, which consists in modifying equation \eqref{defsgn} by introducing a term $n^{1-\alpha}$ (with $\alpha \in (1/2,1)$), leading step sequence of the form $\frac{1}{n^{\alpha}}$. Finally, in order to ensure the asymptotic efficiency, we add an averaging step.
\begin{defi}[Averaged Stochastic Gauss-Newton algorithm]
The Averaged Stochastic Gauss Newton algorithm is recursively defined for all $n \geq 0$ by
\begin{align}
\notag \overline{\Phi}_{n+1} & = \nabla_{h} f \left( X_{n+1} , \overline{\theta}_{n} \right)  \\
\label{defsgnalpha}\theta_{n+1} & = \theta_{n} + \gamma_{n+1}\overline{S}_{n}^{-1}\left( Y_{n+1} - f \left( X_{n+1},\theta_{n} \right) \right)\nabla_{h} f \left( X_{n+1} , \theta_{n} \right)  \\
\label{asgn}\overline{\theta}_{n+1} & = \frac{n+1}{n+2}\overline{\theta}_{n} + \frac{1}{n+2}\theta_{n+1} \\
\notag S_{n+\frac{1}{2}}^{-1} & = S_{n}^{-1} - \left( 1+ \frac{c_{\beta}}{(n+1)^{\beta}} Z_{n+1}^{T}S_{n}^{-1}Z_{n+1} \right)^{-1} \frac{c_{\beta}}{(n+1)^{\beta}} S_{n}^{-1}Z_{n+1}Z_{n+1}^{T}S_{n}^{-1} \\
\notag S_{n+1}^{-1} & = S_{n}^{-1} - \left( 1+ \overline{\Phi}_{n+1}^{T}S_{n}^{-1}\overline{\Phi}_{n+1} \right)^{-1} S_{n}^{-1}\overline{\Phi}_{n+1}\overline{\Phi}_{n+1}^{T}S_{n}^{-1} .
\end{align}
where $\overline{\theta}_{0}=\theta_{0}$ is bounded, $S_{0}$ is symmetric and positive, $\gamma_{n} = c_{\alpha}n^{-\alpha}$ with $c_{\alpha}> 0,c_{\beta} \geq 0$, $\alpha \in (1/2,1)$, $\beta \in ( 0 , \alpha - 1/2)$ and $\overline{S}_{n}^{-1} = (n+1)S_{n}^{-1}$.
\end{defi}
Let us note that despite the modification of the algorithm, Riccati's formula always hold. Finally, remark that if assumption \textbf{(H*)} is satisfied, one can take $c_{\beta}=0$ and Theorem~\ref{consistency} in Section~\ref{sectiontheo} remains true. 

\subsection{Additional assumptions}
 We now suppose that these additional assumptions are fulfilled:
\begin{itemize}
	\item[\textbf{(H2)}] The functional $G$ is twice Frechet differentiable in $\mathbb{R}^{q}$ and 
there is a positive constant $C'$ such that for all $h\in \mathbb{R}^{q}$,
	\[
	\left\| \nabla^{2}G(h) \right\| \leq C' .
	\]
	
\end{itemize}
Note that this is an usual assumption in stochastic convex optimization (see for instance \citep{KY03}), and especially for studying the convergence of stochastic algorithms \citep{bach2014adaptivity,godichon2016,gadat2017optimal}. 
\begin{itemize}
	\item[\textbf{(H3)}] The function $L$ is continuous at $\theta$. 
	
\end{itemize}
Assumption \textbf{(H3)} is used for the consistency of the estimates of $L(\theta)$, which enables to give the almost sure rates of convergence of stochastic Gauss-Newton estimates and their averaged version. 

Let us now make some additional assumptions on the Hessian of the function we would like to minimize:
\begin{itemize}
	\item[\textbf{(H4a)}] The functional $h \longmapsto \nabla^{2}G(h)$ is continuous on a neighborhood of $\theta$.
	\item[\textbf{(H4b)}] The functional $h \longmapsto \nabla^{2}G(h)$ is $C_{G}$-Lipschitz on a neighborhood of $\theta$.
\end{itemize}
Assumption \textbf{(H4a)} is useful for establishing the rate of convergence of stochastic Gauss-Newton algorithms given by \eqref{defsgn} and \eqref{defsgnalpha} while assumption \textbf{(H4b)} enables to give the rate of convergence of the averaged estimates. Clearly \textbf{(H4b)} implies \textbf{(H4a)}. Note that as in the case of assumption \textbf{(H2)}, these last ones are crucial to obtain almost sure rates of convergence of stochastic gradient estimates (see for instance \citep{pelletier1998almost,godichon2016}).

\section{Convergence results}\label{sectiontheo}
We focus here on the convergence of the Averaged Stochastic Gauss-Newton algorithms since the proofs are more unusual than the ones for the non averaged version. Indeed, these last ones are quite closed to the proofs in \cite{BGBP2019}. 
\subsection{Convergence results on the averaged estimates}
The first theorem deals with the almost sure convergence of a subsequence of $\nabla G \left( \theta_{n} \right) $.  
\begin{theo}\label{consistency}
Under assumptions \textbf{(H1)} and \textbf{(H2)} with $c_{\beta}>0$, $G \left( \theta_{n} \right)$ converges almost surely to a finite random variable and there is a subsequence $(\theta_{\varphi_{n}})$ such that
\[
\left\| \nabla G \left( \theta_{\varphi_{n}} \right) \right\| \xrightarrow[n\to + \infty]{a.s} 0 
\]
\end{theo}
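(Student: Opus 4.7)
The plan is to apply the Robbins--Siegmund lemma to the non-negative sequence $G(\theta_n)$, using assumption \textbf{(H2)} (bounded Hessian of $G$) for a Taylor descent inequality, and assumptions \textbf{(H1a)}--\textbf{(H1b)} together with the isotropic Gaussian perturbation $c_\beta i^{-\beta} Z_i Z_i^T$ for the spectral control of $\overline{S}_n^{-1}$ that is needed both to make the increments small and to extract information on $\nabla G$.

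\textbf{Step 1: spectrum of $\overline{S}_n^{-1}$.} Iterating Riccati's formula, one has $S_n = S_0 + \sum_{i=1}^{n} \overline{\Phi}_i \overline{\Phi}_i^T + \sum_{i=1}^{n} c_\beta i^{-\beta} Z_i Z_i^T$. For the \emph{upper} bound on $\overline{S}_n^{-1}$, I would drop the data-dependent positive semidefinite term $\sum \overline{\Phi}_i \overline{\Phi}_i^T$ and apply an entrywise strong law of large numbers to the i.i.d.\ matrices $Z_i Z_i^T$ (of mean $I_q$) weighted by $i^{-\beta}$: this yields $\left(\sum_{i=1}^n i^{-\beta}\right)^{-1}\sum_{i=1}^n i^{-\beta} Z_i Z_i^T \to I_q$ almost surely, and since $\sum_{i=1}^n i^{-\beta} \sim n^{1-\beta}/(1-\beta)$, one obtains $\lambda_{\min}(S_n) \geq c_1 n^{1-\beta}$ a.s., whence $\lambda_{\max}(\overline{S}_n^{-1}) = (n+1)\lambda_{\max}(S_n^{-1}) = O(n^\beta)$ a.s. For the \emph{lower} bound on $\overline{S}_n^{-1}$, assumption \textbf{(H1b)} gives $\mathbb{E}[\|\overline{\Phi}_i\|^2\mid \mathcal{F}_{i-1}] \leq \sqrt{C''}$ with uniformly bounded conditional variance, so a martingale SLLN gives $\sum_{i=1}^n \|\overline{\Phi}_i\|^2 = O(n)$ a.s.; combined with $\sum_{i=1}^n i^{-\beta}\|Z_i\|^2 = O(n^{1-\beta})$ a.s., this implies $\operatorname{tr}(S_n) = O(n)$ a.s., hence $\lambda_{\min}(\overline{S}_n^{-1}) = (n+1)/\lambda_{\max}(S_n) \geq c_2 > 0$ a.s.

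\textbf{Step 2: descent inequality and Robbins--Siegmund.} Since $\overline{S}_n^{-1}$ is $\mathcal{F}_n$-measurable and $\varepsilon_{n+1}$ is centred and independent of $\mathcal{F}_n$, a direct computation gives $\mathbb{E}[\theta_{n+1}-\theta_n \mid \mathcal{F}_n] = -\gamma_{n+1}\overline{S}_n^{-1}\nabla G(\theta_n)$. By \textbf{(H2)}, $G$ is $C'$-smooth, so
\[
G(\theta_{n+1}) \leq G(\theta_n) + \langle \nabla G(\theta_n), \theta_{n+1}-\theta_n\rangle + \tfrac{C'}{2}\|\theta_{n+1}-\theta_n\|^2,
\]
and by \textbf{(H1a)} together with Step 1, $\mathbb{E}[\|\theta_{n+1}-\theta_n\|^2 \mid \mathcal{F}_n] \leq C \gamma_{n+1}^2 \lambda_{\max}(\overline{S}_n^{-1})^2 = O(n^{-2(\alpha-\beta)})$, which is summable because $\alpha-\beta > 1/2$. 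Taking conditional expectation in the Taylor bound and applying Robbins--Siegmund to $G(\theta_n) \geq 0$ gives simultaneously the almost sure convergence of $G(\theta_n)$ to a finite limit and $\sum_n \gamma_{n+1} \nabla G(\theta_n)^T \overline{S}_n^{-1}\nabla G(\theta_n) < \infty$ a.s. Using $\lambda_{\min}(\overline{S}_n^{-1}) \geq c_2$ and $\sum \gamma_{n+1} = \infty$ (since $\alpha<1$), this forces $\liminf \|\nabla G(\theta_n)\| = 0$ a.s., and any subsequence realising this $\liminf$ yields the announced $(\theta_{\varphi_n})$.

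\textbf{Main obstacle.} The delicate point is the spectral control in Step 1, i.e.\ turning the isotropic Gaussian perturbation $\sum c_\beta i^{-\beta} Z_i Z_i^T$ into an almost sure lower bound of order $n^{1-\beta}$ on $\lambda_{\min}(S_n)$, \emph{uniformly} over directions. Because the ambient dimension $q$ is fixed, this reduces to an entrywise SLLN plus continuity of eigenvalues as a function of matrix entries, and the random term $\sum \overline{\Phi}_i \overline{\Phi}_i^T$ only helps since it is positive semidefinite, so the eigenvalue lower bound is not spoiled. Once these spectral bounds are secured, the rest is classical stochastic approximation bookkeeping.
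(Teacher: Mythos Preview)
Your proof is correct and follows essentially the same route as the paper: Step~1 reproduces the content of the paper's Lemma~\ref{superlem} (the paper also drops the $\sum\overline{\Phi}_i\overline{\Phi}_i^T$ term and applies a law of large numbers to $\sum i^{-\beta}Z_iZ_i^T$ for the upper bound on $\overline{S}_n^{-1}$, and uses \textbf{(H1b)} and a martingale argument for the $O(n)$ bound on $\lambda_{\max}(S_n)$), and Step~2 is exactly the paper's argument --- Taylor expansion via \textbf{(H2)}, conditional expectation via \textbf{(H1a)}, summability because $\beta<\alpha-1/2$, and Robbins--Siegmund followed by the divergence $\sum\gamma_{n+1}\lambda_{\min}(\overline{S}_n^{-1})=\infty$ to extract the subsequence. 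The only cosmetic difference is that you bound $\operatorname{tr}(S_n)$ whereas the paper bounds $\|S_n\|_F$, which are equivalent in this finite-dimensional setting.
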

Remark that if the functional $G$ is convex or if we project the algorithm on a convex subspace  where $G$ is convex, previous theorem leads to the almost sure convergence of the estimates. In order to stay as general as possible, let us now introduce the event
\[
\Gamma_{\theta} = \left\lbrace \omega \in \Omega, \quad  \theta_{n}(\omega) \xrightarrow[n\to + \infty]{} \theta \right\rbrace .
\]
It is not unusual to introduce this kind of events for studying convergence of stochastic algorithms without loss of generality: see for instance \citep{pelletier1998almost, Pel00}.
Many criteria can ensure that $\mathbb{P}\left[ \Gamma_{\theta} \right] =1$, i.e that $\theta_{n}$ converges almost surely to $\theta$ (see \cite{Duf97}, \cite{KY03} for stochastic gradient descents and \cite{BGBP2019} for an example of stochastic Newton algorithm). The following corollary gives the almost sure convergence of the estimates of $L(\theta)$.
\begin{cor}\label{corconvsn}
Under assumptions \textbf{(H1)} to \textbf{(H3)}, on $\Gamma_{\theta}$, the following almost sure convergences hold:
\[
\overline{S}_{n} \xrightarrow[n\to + \infty]{a.s} H \quad \quad \text{and} \quad \quad \overline{S}_{n}^{-1} \xrightarrow[n\to + \infty]{a.s} H^{-1} .
\]
\end{cor}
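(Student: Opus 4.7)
The strategy is to pass from $\overline{S}_n^{-1}$ back to $\overline{S}_n$ by applying Riccati's identity, as in the remark following the definition of the ASN algorithm. Applied twice at each step, it yields
\[
S_n = S_0 + \sum_{i=1}^n \overline{\Phi}_i \overline{\Phi}_i^T + c_\beta \sum_{i=1}^n \frac{1}{i^\beta} Z_i Z_i^T,
\]
and hence
\[
\overline{S}_n = \frac{S_n}{n+1} = \frac{S_0}{n+1} + \frac{1}{n+1}\sum_{i=1}^n \overline{\Phi}_i \overline{\Phi}_i^T + \frac{c_\beta}{n+1}\sum_{i=1}^n \frac{1}{i^\beta} Z_i Z_i^T.
\]
The first summand is trivially $o(1)$. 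It remains to show that the second converges almost surely to $H$ on $\Gamma_\theta$ and that the third tends almost surely to $0$.

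For the main (second) summand, I would first observe that iterating the averaging recursion gives $\overline{\theta}_n = \frac{1}{n+1}\sum_{k=0}^n \theta_k$, so Cesaro's lemma ensures $\overline{\theta}_n \to \theta$ almost surely on $\Gamma_\theta$. Letting $(\mathcal{F}_n)$ denote the natural filtration, $X_{i+1}$ is independent of $\mathcal{F}_i$ while $\overline{\theta}_i$ is $\mathcal{F}_i$-measurable, so $\mathbb{E}\!\left[\overline{\Phi}_{i+1}\overline{\Phi}_{i+1}^T \mid \mathcal{F}_i\right] = L(\overline{\theta}_i)$. I would then decompose $\overline{\Phi}_i \overline{\Phi}_i^T = L(\overline{\theta}_{i-1}) + \Delta_i$, where $(\Delta_i)$ is a matrix-valued martingale difference sequence satisfying $\mathbb{E}\!\left[\|\Delta_i\|^2 \mid \mathcal{F}_{i-1}\right] \lesssim \mathbb{E}\!\left[\|\overline{\Phi}_i\|^4 \mid \mathcal{F}_{i-1}\right] \leq C''$ by \textbf{(H1b)}. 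A martingale SLLN (Section~\ref{sectionmartingales}) then yields $\frac{1}{n+1}\sum_{i=1}^n \Delta_i \to 0$ almost surely, while the continuity of $L$ at $\theta$ (assumption \textbf{(H3)}), together with $\overline{\theta}_{i-1}\to\theta$ on $\Gamma_\theta$, gives $L(\overline{\theta}_{i-1})\to L(\theta)=H$; Cesaro's lemma then concludes that $\frac{1}{n+1}\sum_{i=1}^n L(\overline{\theta}_{i-1}) \to H$ on $\Gamma_\theta$.

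For the third summand, $(Z_i Z_i^T)_i$ is i.i.d.\ with mean $I_q$ and bounded second moments, and independent of the remaining randomness. Centering, the martingale $\sum_{i=1}^n i^{-\beta}(Z_i Z_i^T - I_q)$ has quadratic variation of order $\sum_{i=1}^n i^{-2\beta}$, which is $O(n)$ for $\beta>0$, so dividing by $n+1$ produces an almost sure $o(1)$ term; the deterministic part $\frac{c_\beta}{n+1}\sum_{i=1}^n i^{-\beta} I_q$ is of order $n^{-\beta}\to 0$. Summing the three contributions gives $\overline{S}_n \to H$ almost surely on $\Gamma_\theta$, and since $H = L(\theta)$ is invertible by \textbf{(H1c)}, continuity of matrix inversion at $H$ yields $\overline{S}_n^{-1} \to H^{-1}$ almost surely on $\Gamma_\theta$.

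The main obstacle is the martingale step for $\sum \Delta_i$: the pathwise conditional second-moment bound coming from \textbf{(H1b)} does not directly deliver almost sure convergence of $\frac{1}{n+1}\sum_i \Delta_i$, so one must invoke a precise SLLN for $L^2$-bounded martingale differences, precisely of the type collected in Section~\ref{sectionmartingales}; once this is in hand, everything else reduces to Cesaro's lemma and continuity of $L$ and of inversion.
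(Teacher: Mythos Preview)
Your proposal is correct and follows essentially the same route as the paper: the same additive decomposition of $S_n$ into $S_0$, the $\overline{\Phi}_i\overline{\Phi}_i^T$ part (split into its conditional expectation $L(\overline{\theta}_{i-1})$ and a bounded-$L^2$ martingale increment handled via \textbf{(H1b)} and the martingale results of Section~\ref{sectionmartingales}), and the $c_\beta i^{-\beta}Z_iZ_i^T$ part of order $n^{1-\beta}$; then \textbf{(H3)} plus Toeplitz/Ces\`aro for the drift term and continuity of inversion at the invertible limit $H$. The only places where you are slightly more explicit than the paper are the Ces\`aro argument showing $\overline{\theta}_n\to\theta$ on $\Gamma_\theta$ and the final appeal to continuity of matrix inversion, both of which the paper leaves implicit.
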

In order to get the rates of convergence of the estimates $(\theta_{n})$, let us first introduce a new assumption:
\begin{itemize}
\item[\textbf{(H5)}]  There are positive constants $\eta , C_{\eta}$ such that $\eta > \frac{1}{\alpha} -1$ and for all $h \in \mathbb{R}^{q}$,
\begin{equation}
\label{momentdemerde} \mathbb{E}\left[ \left\| \nabla_{h} g \left( X, Y ,h \right) \right\|^{2 \eta +2} \right] \leq C_{\eta} .
\end{equation}
\end{itemize}
\begin{theo}\label{ratetheta}
Assume assumptions \textbf{(H1)} to \textbf{(H4a)} and \textbf{(H5)} hold. Then, on $\Gamma_{\theta}$,
\begin{equation}
\label{ratethetaas}\left\| \theta_{n} - \theta \right\|^{2} = \mathcal{O}\left( \frac{\ln n}{n^{\alpha}}\right) \quad a.s.
\end{equation}
Besides, adding assumption \textbf{(H4b)}, assuming that the function $L$ is $C_{f}$-Lipschitz on a neighborhood of $\theta$, and that the function $h \longmapsto \mathbb{E}\left[ \nabla_{h}g\left( X,Y,h \right) \nabla_{h} g \left( X,Y ,h \right)^{T} \right]$ is $C_{g}$-Lipschitz on a neighborhood of $\theta$, then on $\Gamma_{\theta}$,
\begin{equation}
\label{tlc}\sqrt{\frac{n^{\alpha}}{c_{\alpha}}} \left( \theta_{n} - \theta \right) \xrightarrow[n\to + \infty]{\mathcal{L}} \mathcal{N}\left( 0,\frac{\sigma^{2}}{2} L(\theta)^{-1} \right) 
\end{equation}
\end{theo}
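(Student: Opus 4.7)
The strategy is to rewrite \eqref{defsgnalpha} as a second-order stochastic approximation and to exploit Corollary~\ref{corconvsn} to replace the random preconditioner $\overline{S}_n^{-1}$ by its deterministic limit $L(\theta)^{-1}$. Since $g(X,Y,h)=(Y-f(X,h))^2$ gives the identity $(Y_{n+1}-f(X_{n+1},\theta_n))\nabla_h f(X_{n+1},\theta_n)=-\tfrac12\nabla_h g(X_{n+1},Y_{n+1},\theta_n)$, the update reads
\[
\theta_{n+1}-\theta = \theta_n-\theta - \gamma_{n+1}\overline{S}_n^{-1}\nabla G(\theta_n) - \tfrac{\gamma_{n+1}}{2}\overline{S}_n^{-1}\xi_{n+1},
\]
where $\xi_{n+1}=\nabla_h g(X_{n+1},Y_{n+1},\theta_n)-2\nabla G(\theta_n)$ is an $\mathcal{F}_n$-martingale increment with $\mathcal{F}_n=\sigma(X_i,Y_i,Z_i:i\le n)$. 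All the reasoning below is on the event $\Gamma_\theta$, on which Corollary~\ref{corconvsn} yields $\overline{S}_n^{-1}\to L(\theta)^{-1}$ a.s.; set $H=L(\theta)$.

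For the almost sure rate \eqref{ratethetaas}, let $V_n=\|\theta_n-\theta\|^2$. Expanding $V_{n+1}$, taking conditional expectation, and using \textbf{(H2)} and \textbf{(H4a)} together with $\nabla G(\theta)=0$ to write $\nabla G(\theta_n)=H(\theta_n-\theta)+o(\|\theta_n-\theta\|)$, along with $\overline{S}_n^{-1}H=I+o(1)$, one would reach
\[
\mathbb{E}[V_{n+1}\mid\mathcal{F}_n]\le \bigl(1-2\gamma_{n+1}(1-\delta_n)\bigr)V_n + C\gamma_{n+1}^2,
\]
with $\delta_n\to 0$ a.s.\ and the quadratic remainder controlled via \textbf{(H1a)}. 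A Robbins--Siegmund argument then gives $V_n=O(\gamma_n)$ in $L^1$. Upgrading this to an almost sure bound is done by plugging the $L^2$ rate back in and estimating the tail martingale $M_n=\sum_k\gamma_k\overline{S}_{k-1}^{-1}\xi_k$ via the sharp martingale inequalities collected in Section~\ref{sectionmartingales}; the moment condition \textbf{(H5)} with $\eta>1/\alpha-1$ is precisely what ensures a $\ln n$-size loss, producing $V_n=O(\ln n/n^\alpha)$.

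For the CLT \eqref{tlc}, the refined Taylor expansion given by \textbf{(H4b)} is $\nabla G(\theta_n)=H(\theta_n-\theta)+O(\|\theta_n-\theta\|^2)$, so writing $\overline{S}_n^{-1}H=I+\Delta_n$ with $\Delta_n\to 0$,
\[
\theta_{n+1}-\theta = \bigl(I-\gamma_{n+1}(I+\Delta_n)\bigr)(\theta_n-\theta) - \tfrac{\gamma_{n+1}}{2}\overline{S}_n^{-1}\xi_{n+1} + r_{n+1},
\]
with $\|r_{n+1}\|=O(\gamma_{n+1}V_n)$; by \eqref{ratethetaas} the cumulative contribution of $(r_k)$ to $\theta_n-\theta$ is $o_P(\sqrt{\gamma_n})$. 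Iterating and using the Lipschitz continuity of $L$ to replace $\overline{S}_{k-1}^{-1}$ by $H^{-1}$ at controlled cost, the dominant term becomes $-\tfrac12\sum_{k=1}^n\gamma_k\Pi_{n,k}H^{-1}\xi_k$, where $\Pi_{n,k}=\prod_{j=k+1}^n(I-\gamma_j(I+\Delta_{j-1}))$; the initial error $\Pi_{n,0}(\theta_0-\theta)$ decays faster than any polynomial because $\sum_k\gamma_k=\infty$. Applying a Lindeberg--Feller martingale CLT to this sum, the Lipschitz hypothesis on $h\mapsto\mathbb{E}[\nabla_h g(X,Y,h)\nabla_h g(X,Y,h)^T]$ combined with \eqref{ratethetaas} ensures that the conditional covariance of $\xi_k$ converges to $\mathbb{E}[\nabla_h g(X,Y,\theta)\nabla_h g(X,Y,\theta)^T]=4\sigma^2 L(\theta)$ (using $\nabla_h g(X,Y,\theta)=-2\varepsilon\nabla_h f(X,\theta)$). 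The classical scaling for slow-decreasing steps $\gamma_n=c_\alpha n^{-\alpha}$ then yields the announced Gaussian limit with covariance $\tfrac{\sigma^2}{2}L(\theta)^{-1}$ after renormalisation by $\sqrt{n^\alpha/c_\alpha}$.

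The main technical obstacle is \eqref{ratethetaas}: converting the $L^2$ rate $O(\gamma_n)$ into the almost sure rate $O(\ln n/n^\alpha)$ requires the martingale tools from Section~\ref{sectionmartingales}, and it is this step that forces the moment condition $\eta>1/\alpha-1$ in \textbf{(H5)}. A secondary difficulty in the CLT is that $\overline{S}_n^{-1}$ enters both the drift and the noise, so a quantitative control of $\|\overline{S}_n^{-1}-H^{-1}\|$ through the Lipschitz continuity of $L$ and through the a.s.\ rate from the first part is essential to avoid a bias in the limiting covariance.
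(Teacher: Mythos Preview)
Your basic decomposition and the outline of the CLT are close to the paper's in spirit, but your route to the almost sure rate \eqref{ratethetaas} is genuinely different from the paper's and, as written, has a gap.

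The paper does \emph{not} pass through an $L^{1}$ or $L^{2}$ rate. It iterates the linearised recursion directly to obtain
\[
\theta_{n}-\theta=\beta_{n,0}(\theta_{0}-\theta)+\sum_{k=0}^{n-1}\beta_{n,k+1}\gamma_{k+1}\bigl(H^{-1}-\overline{S}_{k}^{-1}\bigr)H(\theta_{k}-\theta)-\sum_{k=0}^{n-1}\beta_{n,k+1}\gamma_{k+1}\overline{S}_{k}^{-1}\delta_{k}+\sum_{k=0}^{n-1}\beta_{n,k+1}\gamma_{k+1}\overline{S}_{k}^{-1}\xi_{k+1}
\]
with \emph{scalar} weights $\beta_{n,k}=\prod_{j=k+1}^{n}(1-\gamma_{j})$. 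Theorem~\ref{theomartbeta} is applied straight to the last sum to get $O(\ln n/n^{\alpha})$ almost surely (this is where \textbf{(H5)} enters). The two middle sums, collected as $\Delta_{n}$, satisfy $\Delta_{n+1}=(1-\gamma_{n+1})\Delta_{n}+o(\gamma_{n+1}\|\theta_{n}-\theta\|)$; since $\theta_{n}-\theta$ is $\Delta_{n}$ plus already-controlled pieces, a stabilisation lemma closes the loop and yields $\|\Delta_{n}\|=O(\sqrt{\ln n/n^{\alpha}})$.

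Your argument breaks at two points. First, Robbins--Siegmund applied to $\mathbb{E}[V_{n+1}\mid\mathcal{F}_{n}]\le(1-2\gamma_{n+1}(1-\delta_{n}))V_{n}+C\gamma_{n+1}^{2}$ gives almost sure convergence of $V_{n}$ and $\sum\gamma_{n}V_{n}<\infty$, not the $L^{1}$ rate $V_{n}=O(\gamma_{n})$; moreover $\delta_{n}$ is random and $\Gamma_{\theta}$ need not have full measure, so you cannot pass to a deterministic recursion by taking expectations. Second, the object you call $M_{n}=\sum_{k}\gamma_{k}\overline{S}_{k-1}^{-1}\xi_{k}$ is the wrong martingale: without the weights $\beta_{n,k}$, this sum is almost surely convergent (because $\sum\gamma_{k}^{2}<\infty$) and says nothing about the rate of $\theta_{n}-\theta$. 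The product structure $\beta_{n,k}$ is precisely what Theorem~\ref{theomartbeta} is tailored to, and it is that weighted martingale, not yours, that delivers the $\ln n/n^{\alpha}$ rate.

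For the CLT \eqref{tlc} your sketch is broadly compatible with the paper's. The paper keeps the \emph{same} scalar $\beta_{n,k}$ decomposition, splits the martingale term as $\sum\beta_{n,k+1}\gamma_{k+1}(\overline{S}_{k}^{-1}-H^{-1})\xi_{k+1}+\sum\beta_{n,k+1}\gamma_{k+1}H^{-1}\xi_{k+1}$, kills the first piece via Theorem~\ref{theomartbeta} together with a quantitative rate on $\overline{S}_{n}^{-1}-H^{-1}$ (Remark~\ref{rempourletrucpourri}, which needs the Lipschitz assumption on $L$), and applies Jakubowski's martingale CLT to the second. Your random products $\Pi_{n,k}$ are a legitimate alternative packaging, but note that to control them you would again need the weighted-martingale machinery of Section~\ref{sectionmartingales}, so the saving is only notational.
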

These are quite usual results for a step sequence of order $n^{-\alpha}$. Indeed, as expected, we have a "loss" on the rate of convergence of $(\theta_{n})$, but the averaged step enables to get an asymptotically optimal behaviour, which is given by the following theorem.
\begin{theo}\label{ratethetabar}
Assuming \textbf{(H1)} to \textbf{(H4a)} together with \textbf{(H5)}, on $\Gamma_{\theta}$,
\[
\left\| \overline{\theta}_{n} - \theta \right\|^{2} = \mathcal{O}\left( \frac{\ln n}{n}\right) \quad a.s .
\]
Moreover, suppose that the function $h \longmapsto \mathbb{E}\left[ \nabla_{h}g\left( X,Y,h \right) \nabla_{h} g \left( X,Y ,h \right)^{T} \right]$ is continuous at $\theta$, then on $\Gamma_{\theta}$,
\[
\sqrt{n} \left( \overline{\theta}_{n} - \theta \right) \xrightarrow[n\to + \infty]{\mathcal{L}} \mathcal{N}\left( 0 , \sigma^{2}L(\theta)^{-1} \right) .
\]
\end{theo}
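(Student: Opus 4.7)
The plan is to follow the Polyak--Ruppert averaging strategy, adapted to the Gauss--Newton update. Write the recursion \eqref{defsgnalpha} in the form
\begin{equation*}
\theta_{n+1} - \theta_n \;=\; -\gamma_{n+1}\overline{S}_n^{-1}\nabla G(\theta_n) + \gamma_{n+1}\overline{S}_n^{-1}\xi_{n+1},
\end{equation*}
where $\xi_{n+1} := (Y_{n+1}-f(X_{n+1},\theta_n))\nabla_h f(X_{n+1},\theta_n) + \nabla G(\theta_n)$. Because $\epsilon$ is centred and independent of $X$, the sequence $(\xi_n)$ is a martingale difference with respect to the natural filtration $(\mathcal{F}_n)$. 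A Taylor expansion of $\nabla G$ at $\theta$, using $\nabla G(\theta) = 0$ and \textbf{(H4b)}, yields $\nabla G(\theta_n) = H(\theta_n - \theta) + \delta_n$ with $\|\delta_n\| \leq C_G \|\theta_n - \theta\|^2$ on $\Gamma_\theta$. Solving the linearised recursion for $\theta_n - \theta$ produces the key identity
\begin{equation*}
\theta_n - \theta \;=\; H^{-1}\xi_{n+1} \;-\; H^{-1}\delta_n \;-\; \gamma_{n+1}^{-1}\, H^{-1}\overline{S}_n(\theta_{n+1}-\theta_n).
\end{equation*}

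Summing from $k=0$ to $n$ and dividing by $n+1$ reconstructs $\overline{\theta}_n - \theta$ (up to an $O(1/n)$ boundary term) and produces three pieces. The main piece is the martingale average $M_n := \frac{1}{n+1}\sum_{k=0}^n H^{-1}\xi_{k+1}$. Assumption \textbf{(H1a)} supplies a uniform bound on $\mathbb{E}[\|\xi_{k+1}\|^2\mid\mathcal{F}_k]$, so the strong law of large numbers for martingales (Section \ref{sectionmartingales}) gives $\|M_n\|^2 = \mathcal{O}(\ln n / n)$ almost surely, which is the announced a.s. rate. For the CLT, I would apply the martingale central limit theorem to $\sqrt{n}\,M_n$: Lindeberg's conditional condition is checked through the $(2+2\eta)$-moment bound of \textbf{(H5)}, and on $\Gamma_\theta$ the conditional covariances $\mathbb{E}[\xi_{k+1}\xi_{k+1}^T\mid\mathcal{F}_k]$ converge almost surely to $\mathbb{E}[\epsilon^2]\,L(\theta) = \sigma^2 L(\theta) = \sigma^2 H$ thanks to the assumed continuity at $\theta$ of $h \mapsto \mathbb{E}[\nabla_h g(X,Y,h)\nabla_h g(X,Y,h)^T]$ together with Theorem \ref{ratetheta}. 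This gives $\sqrt{n}\,M_n \xrightarrow[n \to \infty]{\mathcal{L}} \mathcal{N}(0,\sigma^2 L(\theta)^{-1})$.

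The two residual terms must be $o_{a.s.}(n^{-1/2})$ (for the CLT) and $O(\sqrt{\ln n/n})$ (for the a.s. rate). The quadratic-remainder term $\frac{1}{n+1}\sum_{k=0}^n H^{-1}\delta_k$ is bounded directly using Theorem \ref{ratetheta}: since $\|\delta_k\| = \mathcal{O}(\ln k/k^\alpha)$ a.s., this piece is $\mathcal{O}(\ln n / n^\alpha) = o(n^{-1/2})$ because $\alpha > 1/2$. The telescoping-type term $\frac{1}{n+1}\sum_{k=0}^n \gamma_{k+1}^{-1}H^{-1}\overline{S}_k(\theta_{k+1}-\theta_k)$ is handled by Abel summation: use Corollary \ref{corconvsn} to replace $\overline{S}_k$ by $H$ up to a vanishing error, giving a telescoping contribution plus a sum involving the increments $\gamma_{k+1}^{-1}-\gamma_k^{-1} = \mathcal{O}(k^{\alpha-1})$ multiplied by $\theta_k - \theta$. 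Plugging in $\|\theta_k - \theta\| = \mathcal{O}(\sqrt{\ln k/k^\alpha})$ from Theorem \ref{ratetheta}, the boundary term behaves like $\gamma_n^{-1}\|\theta_n-\theta\|/n = \mathcal{O}(\sqrt{\ln n/n^{2-\alpha}}) = o(n^{-1/2})$ since $\alpha < 1$, and the remaining sum is controlled similarly.

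The main obstacle is the Abel-summation term: one must simultaneously exploit the rate of convergence of $\overline{S}_n$ to $H$ (Corollary \ref{corconvsn}) and of $\theta_n$ to $\theta$ (Theorem \ref{ratetheta}), and interlock them with the behaviour of the step sequence $\gamma_n = c_\alpha n^{-\alpha}$. The constraint $\alpha \in (1/2, 1)$ appears naturally here: $\alpha > 1/2$ is needed so that the averaged quadratic remainder is $o(n^{-1/2})$, and $\alpha < 1$ is required so that the boundary term coming from Abel summation is also $o(n^{-1/2})$; the restriction $\beta \in (0,\alpha-1/2)$ ensures that the regularisation matrix $\frac{c_\beta}{k^\beta}Z_kZ_k^T$ does not perturb the limiting Hessian in the CLT.
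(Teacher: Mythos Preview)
Your overall strategy matches the paper's proof exactly: derive the key identity \eqref{equationcle}, sum it to obtain the three-term decomposition \eqref{decmoy}, treat the martingale average via the LLN/CLT for martingales, bound the quadratic remainder using \textbf{(H4b)} and Theorem~\ref{ratetheta}, and attack the third term by Abel summation. Your treatment of the first two pieces is correct and coincides with the paper.

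The gap is in your handling of the Abel-summation term. You propose to ``replace $\overline{S}_k$ by $H$ up to a vanishing error'' using Corollary~\ref{corconvsn}. But Corollary~\ref{corconvsn} only gives $\overline{S}_k\to H$ with no rate (the rate is Corollary~\ref{ratesn}, which you cannot invoke here since its proof \emph{uses} Theorem~\ref{ratethetabar}). After Abel summation the relevant quantity is
\[
\frac{1}{n+1}\sum_{k=1}^{n+1}\bigl(\gamma_k^{-1}\overline{S}_{k-1}-\gamma_{k+1}^{-1}\overline{S}_k\bigr)(\theta_k-\theta),
\]
and controlling it requires information on the \emph{increments} $\overline{S}_{k-1}-\overline{S}_k$, not merely their convergence. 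These increments contain the random rank-one term $\frac{1}{k}\bigl(\overline{\Phi}_k\overline{\Phi}_k^{T}+\frac{c_\beta}{k^\beta}Z_kZ_k^{T}\bigr)$, and $\|\overline{\Phi}_k\|^{2}$ is not bounded almost surely---only its fourth moment is controlled by \textbf{(H1b)}. A naive substitution $\overline{S}_k=H+o(1)$ therefore yields at best an $o(1)$ bound, far from the $o(n^{-1/2})$ needed for the CLT (and even for the a.s.\ rate).

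The paper resolves this by expanding the increment explicitly via
\[
\overline{S}_k=\overline{S}_{k-1}-\tfrac{1}{k}\overline{S}_{k-1}+\tfrac{1}{k}\bigl(\overline{\Phi}_k\overline{\Phi}_k^{T}+\tfrac{c_\beta}{k^\beta}Z_kZ_k^{T}\bigr),
\]
which splits the Abel-summation sum into three parts. Two of them involve $\overline{S}_{k-1}$ and are handled exactly as you suggest. The remaining term $(**)$, containing the unbounded $\overline{\Phi}_k\overline{\Phi}_k^{T}$, is the real work: the paper introduces the truncation events $\Omega_k=\{\|\theta_k-\theta\|\le (\ln k)^{1/2+\delta}k^{-\alpha/2}\}$, shows $\mathbf{1}_{\Omega_k^c}$ is eventually zero a.s., and on $\Omega_k$ centers $\|\overline{\Phi}_k\overline{\Phi}_k^{T}+\frac{c_\beta}{k^\beta}Z_kZ_k^{T}\|_{op}$ to obtain a martingale-difference sequence $(\Xi_k)$ whose partial sums are controlled by Theorem~\ref{theomartmoy}. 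Without this truncation-plus-martingale step your argument does not close.
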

\begin{cor}\label{ratesn}
Assuming \textbf{(H1)} to \textbf{(H5)} and assuming  that the functional $L$ is $C_{f}$-Lipschitz on a neighborhood of $\theta$, then on $\Gamma_{\theta}$, for all $\delta > 0$,
\begin{align*}
\left\| \overline{S}_{n} - H \right\|_{F}^{2}& =\mathcal{O}\left( \max \left\lbrace \frac{c_{\beta}^{2}}{n^{2\beta}} , \frac{(\ln n)^{1+\delta}}{n} \right\rbrace \right)  a.s,  \\ \left\| \overline{S}_{n}^{-1} - H^{-1}\right\|_{F}^{2} & = \mathcal{O}\left( \max \left\lbrace \frac{c_{\beta}^{2}}{n^{2\beta}} , \frac{(\ln n)^{1+\delta}}{n} \right\rbrace \right)  a.s.
\end{align*}
\end{cor}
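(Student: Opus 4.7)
The plan is to decompose $\overline{S}_n - H$ into three contributions and bound each separately, then transfer the bound to the inverse via a standard resolvent identity. Starting from
\[
\overline{S}_n - H = \frac{S_0 - H}{n+1} + \frac{1}{n+1}\sum_{i=1}^n \bigl(\overline{\Phi}_i \overline{\Phi}_i^T - H\bigr) + \frac{c_\beta}{n+1}\sum_{i=1}^n \frac{Z_i Z_i^T}{i^\beta},
\]
the first term is $O(1/n^2)$ in squared Frobenius norm and absorbed by the $(\ln n)^{1+\delta}/n$ term. The Gaussian term is handled by noting that $Z_i Z_i^T - I_q$ is a martingale increment with bounded fourth moment; by the strong law (or a direct martingale rate argument as in Section~\ref{sectionmartingales}) one has $\frac{1}{n+1}\sum_{i=1}^n \frac{Z_i Z_i^T - I_q}{i^\beta}$ of order $O(n^{-1/2-\beta}(\ln n)^{1/2})$ a.s., while $\frac{c_\beta}{n+1}\sum_{i=1}^n \frac{I_q}{i^\beta} = \frac{c_\beta}{(1-\beta)n^\beta} I_q + O(n^{-1})$, yielding a squared norm contribution of order $c_\beta^2/n^{2\beta}$.

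For the main sum, I would introduce the decomposition
\[
\overline{\Phi}_i \overline{\Phi}_i^T - H = \bigl[\overline{\Phi}_i \overline{\Phi}_i^T - L(\overline{\theta}_{i-1})\bigr] + \bigl[L(\overline{\theta}_{i-1}) - L(\theta)\bigr].
\]
The first bracket is a martingale difference with respect to the filtration $\mathcal{F}_{i-1} = \sigma(X_1,Y_1,Z_1,\dots,X_{i-1},Y_{i-1},Z_{i-1})$, since $\overline{\theta}_{i-1}$ is $\mathcal{F}_{i-1}$-measurable and $X_i$ is independent of $\mathcal{F}_{i-1}$. Its conditional second moment is bounded thanks to \textbf{(H1b)}. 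On $\Gamma_\theta$ the martingale rate results of Section~\ref{sectionmartingales} then yield
\[
\left\| \frac{1}{n+1}\sum_{i=1}^n \bigl(\overline{\Phi}_i \overline{\Phi}_i^T - L(\overline{\theta}_{i-1})\bigr)\right\|_F^2 = O\!\left(\frac{(\ln n)^{1+\delta}}{n}\right) \quad \text{a.s.}
\]
For the second bracket, I use the Lipschitz assumption on $L$: $\|L(\overline{\theta}_{i-1}) - L(\theta)\|_F \leq C_f \|\overline{\theta}_{i-1} - \theta\|$. Injecting the rate from Theorem~\ref{ratethetabar}, i.e.\ $\|\overline{\theta}_{i-1} - \theta\|^2 = O(\ln i / i)$, a Kronecker-lemma-style argument gives
\[
\frac{1}{n+1}\sum_{i=1}^n \|\overline{\theta}_{i-1} - \theta\| = O\!\left(\sqrt{\frac{\ln n}{n}}\right) \quad \text{a.s.},
\]
whose square is again $O(\ln n / n)$, absorbed into $(\ln n)^{1+\delta}/n$.

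Combining the three contributions yields the announced rate for $\overline{S}_n - H$. For the inverse, I apply the identity
\[
\overline{S}_n^{-1} - H^{-1} = H^{-1}\bigl(H - \overline{S}_n\bigr)\overline{S}_n^{-1}.
\]
By Corollary~\ref{corconvsn}, on $\Gamma_\theta$ the matrix $\overline{S}_n$ converges a.s.\ to the invertible $H$, so $\|\overline{S}_n^{-1}\|$ is a.s.\ bounded for large $n$. Consequently $\|\overline{S}_n^{-1} - H^{-1}\|_F$ inherits the same rate as $\|\overline{S}_n - H\|_F$, concluding the proof.

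The main obstacle is the almost sure control of the martingale term $\sum_i (\overline{\Phi}_i \overline{\Phi}_i^T - L(\overline{\theta}_{i-1}))$: one has only conditional second-moment bounds from \textbf{(H1b)}, so to obtain a rate of $\sqrt{n(\ln n)^{1+\delta}}$ (rather than just $o(n)$ from the SLLN) one must invoke the sharper martingale convergence result of Section~\ref{sectionmartingales}, which is precisely what forces the arbitrary $\delta > 0$ in the logarithmic factor.
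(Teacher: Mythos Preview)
Your proof is correct and follows essentially the same route as the paper: you use the same decomposition of $\overline{S}_n - H$ into the initial term, the $\overline{\Phi}_i\overline{\Phi}_i^T$ sum (split into martingale part $\overline{\Phi}_i\overline{\Phi}_i^T - L(\overline{\theta}_{i-1})$ and drift part $L(\overline{\theta}_{i-1}) - L(\theta)$), and the Gaussian regularization term, bounding each piece exactly as in the paper via equation~(\ref{vitXi}), the Lipschitz property of $L$ combined with the rate of Theorem~\ref{ratethetabar}, and equation~(\ref{vitzi}) respectively. Your explicit treatment of the inverse through the resolvent identity and Corollary~\ref{corconvsn} is a standard step that the paper leaves implicit.
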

Recall here that if \textbf{(H*)} holds, then one can take $c_{\beta} = 0$ leading to a rate of convergence of order $\frac{1}{n}$ (up to a log term). 
\subsection{Convergence results on the Stochastic Gauss-Newton algorithm}
\begin{theo}\label{theothetatilde}
Assume assumptions \textbf{(H1)} to \textbf{(H5)} hold. Then, on $\widetilde{\Gamma}_{\theta}:= \left\lbrace \omega \in \Omega, \quad \widetilde{\theta}_{n}(\omega) \xrightarrow[n\to + \infty]{a.s} \theta \right\rbrace$,
\[
\left\| \tilde{\theta}_{n} - \theta \right\|^{2} = \mathcal{O} \left( \frac{\ln n}{n}\right) \quad a.s .
\]
Moreover, suppose that the functional $h \longmapsto \mathbb{E}\left[ \nabla_{h}g\left( X,Y,h \right) \nabla_{h} g \left( X,Y ,h \right)^{T} \right]$ is continuous at $\theta$, then on $\widetilde{\Gamma}_{\theta}$,
\[
\sqrt{n} \left( \widetilde{\theta}_{n} - \theta \right) \xrightarrow[n\to + \infty]{\mathcal{L}} \mathcal{N}\left( 0 , \sigma^{2}L(\theta)^{-1} \right) .
\]
\end{theo}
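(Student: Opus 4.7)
The plan is to linearize the recursion around $\theta$ and separate an almost sure rate estimate (via a Robbins-Siegmund type argument) from a martingale central limit argument, following the same strategy as the Newton-type analysis in \cite{BGBP2019}. Set $U_n = \widetilde{\theta}_n - \theta$ and $\mathcal{F}_n = \sigma(\widetilde{\theta}_0, X_1, Y_1, Z_1, \ldots, X_n, Y_n, Z_n)$. Writing $Y_{n+1} - f(X_{n+1},\widetilde{\theta}_n) = \varepsilon_{n+1} + f(X_{n+1},\theta) - f(X_{n+1},\widetilde{\theta}_n)$ and using that $\varepsilon_{n+1}$ is centered and independent of $X_{n+1}$, one checks that $\E[\widetilde{\Phi}_{n+1}(Y_{n+1}-f(X_{n+1},\widetilde{\theta}_n)) \mid \mathcal{F}_n] = -\nabla G(\widetilde{\theta}_n)$, which yields the decomposition
$$U_{n+1} = U_n - \widetilde{H}_n^{-1} \nabla G(\widetilde{\theta}_n) + \widetilde{H}_n^{-1} M_{n+1},$$
with $(M_n)$ a martingale increment sequence. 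Using \textbf{(H4b)} together with $\nabla G(\theta)=0$ and $\nabla^{2}G(\theta)=H$, Taylor's formula gives $\nabla G(\widetilde{\theta}_n) = H\,U_n + R_n$ with $\|R_n\| \le C\|U_n\|^{2}$ near $\theta$, so that on $\widetilde{\Gamma}_\theta$,
$$U_{n+1} = (I - \widetilde{H}_n^{-1} H) U_n - \widetilde{H}_n^{-1} R_n + \widetilde{H}_n^{-1} M_{n+1}.$$

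The first key step is to prove the analogue of Corollary~\ref{corconvsn} for $\widetilde{H}_n$, namely that on $\widetilde{\Gamma}_\theta$, $n^{-1}\widetilde{H}_n \to H$ almost surely, and hence $n\,\widetilde{H}_n^{-1}\to H^{-1}$. This follows from a strong law for adapted sums applied to $\sum_i \widetilde{\Phi}_i\widetilde{\Phi}_i^T$ (bounded in $L^2$ by \textbf{(H1b)}, with conditional expectations converging to $L(\theta)$ thanks to \textbf{(H3)} and $\widetilde{\theta}_{i-1}\to\theta$), together with the bound $\|\sum_{i\le n} c_{\widetilde{\beta}}\,i^{-\widetilde{\beta}} Z_iZ_i^T\| = O(n^{1-\widetilde{\beta}}) = o(n)$. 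Then, squaring the displayed recursion and taking conditional expectation yields
$$\E[\|U_{n+1}\|^{2}\mid\mathcal{F}_n] \le \Bigl(1 - \tfrac{2}{n}(1+o(1))\Bigr)\|U_n\|^{2} + O\!\Bigl(\tfrac{1}{n^{2}}\Bigr) + O\!\Bigl(\tfrac{\|U_n\|^{3}}{n}\Bigr),$$
since $\lambda_{\max}(\widetilde{H}_n^{-1}) = O(1/n)$ on $\widetilde{\Gamma}_\theta$ and $\E[\|M_{n+1}\|^{2}\mid\mathcal{F}_n]$ is bounded by \textbf{(H1a)}. Comparison with the deterministic recursion $v_{n+1} = v_n(1-2/n) + C/n^{2}$, combined with the martingale rate tools of Section~\ref{sectionmartingales}, yields $\|U_n\|^{2} = O(\ln n / n)$ almost surely.

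For the central limit theorem, iterate the linearized recursion to obtain
$$U_n = B_{n,0} U_0 + \sum_{k=0}^{n-1} B_{n,k+1}\,\widetilde{H}_k^{-1} M_{k+1} - \sum_{k=0}^{n-1} B_{n,k+1}\,\widetilde{H}_k^{-1} R_k, \qquad B_{n,k} := \prod_{j=k}^{n-1}(I - \widetilde{H}_j^{-1} H).$$
Since $\widetilde{H}_j^{-1} H = I/j + o(1/j)$ a.s. on $\widetilde{\Gamma}_\theta$, an operator-norm computation gives $B_{n,k} = (k/n)\,I + o(k/n)$. The transient $B_{n,0}U_0$ is $o(1/\sqrt{n})$, and the Taylor residual $\sum B_{n,k+1}\widetilde{H}_k^{-1} R_k$ is also $o(1/\sqrt{n})$ thanks to $\|R_k\| = O(\ln k/k)$ from the previous step. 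The leading term reduces to $\frac{1}{n}\sum_{k=0}^{n-1} H^{-1} M_{k+1}$, and the martingale central limit theorem (e.g.\ Hall and Heyde) applied to $n^{-1/2}\sum_{k=1}^{n} H^{-1} M_k$ gives convergence in law to $\mathcal{N}(0,\, H^{-1}\Sigma H^{-1})$ with $\Sigma = \sigma^{2} L(\theta)$ obtained from the continuity assumption on $h\mapsto\E[\nabla_h g\,\nabla_h g^T]$ at $\theta$, so that $H^{-1}\Sigma H^{-1} = \sigma^{2} L(\theta)^{-1}$. The Lindeberg condition is supplied by the $(2+2\eta)$-moment bound of \textbf{(H5)}.

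The delicate step is the handling of the matrix product $B_{n,k}$. Since the matrices $I - \widetilde{H}_j^{-1} H$ do not commute, one cannot reduce the analysis to a scalar exponential; instead one has to use the a.s. approximation $\widetilde{H}_j^{-1} H = I/j + \varepsilon_j$ with $\|\varepsilon_j\| = o(1/j)$ to show in operator norm that $B_{n,k} - (k/n)I$ is uniformly small, and then to control all the cross terms it generates in both the remainder estimate and the martingale variance computation. This is where the assumption $c_{\widetilde{\beta}} > 0$, which ensures that $\widetilde{H}_n^{-1}$ does not diverge outside $\widetilde{\Gamma}_\theta$, enters to justify a priori bounds needed before localizing on $\widetilde{\Gamma}_\theta$.
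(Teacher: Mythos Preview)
Your overall strategy---linearize around $\theta$, isolate the martingale increment, prove convergence of $n^{-1}\widetilde{H}_n$ to $H$, then obtain the rate and the CLT separately---matches the paper's. The difference lies in how the contraction factor is handled, and this is precisely the point you flag as ``delicate''.

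The paper does \emph{not} work with the non-commuting product $B_{n,k}=\prod_{j}(I-\widetilde{H}_j^{-1}H)$ at all. Instead it uses the algebraic identity $\overline{H}_n^{-1}H = I - (H^{-1}-\overline{H}_n^{-1})H$ (where $\overline{H}_n^{-1}=(n+1)\widetilde{H}_n^{-1}$) to rewrite the recursion as
\[
\widetilde{\theta}_{n+1}-\theta \;=\; \frac{n}{n+1}\,(\widetilde{\theta}_n-\theta)
\;+\;\frac{1}{n+1}\,(H^{-1}-\overline{H}_n^{-1})H(\widetilde{\theta}_n-\theta)
\;+\;\frac{1}{n+1}\,\overline{H}_n^{-1}\widetilde{\xi}_{n+1}
\;-\;\frac{1}{n+1}\,\overline{H}_n^{-1}\widetilde{\delta}_n .
\]
The leading factor $n/(n+1)$ is a \emph{scalar}, so iterating gives the exact telescoping product $\prod_{j=k+1}^{n}\frac{j}{j+1}=\frac{k+1}{n+1}$, and one lands directly on
\[
\widetilde{\theta}_n-\theta \;=\; \frac{1}{n}(\widetilde{\theta}_0-\theta)
\;+\;\frac{1}{n}\sum_{k=0}^{n-1}(H^{-1}-\overline{H}_k^{-1})H(\widetilde{\theta}_k-\theta)
\;-\;\frac{1}{n}\sum_{k=0}^{n-1}\overline{H}_k^{-1}\widetilde{\delta}_k
\;+\;\frac{1}{n}\sum_{k=0}^{n-1}\overline{H}_k^{-1}\widetilde{\xi}_{k+1}.
\]
The matrix correction $(H^{-1}-\overline{H}_n^{-1})H$, which is $o(1)$ on $\widetilde{\Gamma}_\theta$, is shoved into the remainder and then controlled together with $\widetilde{\delta}_n$ by a stabilization lemma (as in \cite{BGBP2019}). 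This completely sidesteps the operator-norm control of $B_{n,k}$ that you identify as the crux; no estimate of the type $B_{n,k}=(k/n)I+o(k/n)$ is ever needed.

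For the rate, the paper does not use a Robbins--Siegmund squaring of $\|U_n\|^2$ either. The $O(\ln n/n)$ bound comes instead from the decomposition above: the martingale term is handled by a law of large numbers for martingales (giving the $\ln n/n$ directly), and the two remainder sums, gathered into $\widetilde{\Delta}_n$, satisfy $\|\widetilde{\Delta}_{n+1}\|\le\frac{n}{n+1}\|\widetilde{\Delta}_n\|+o\!\left(\frac{1}{n}\|\widetilde{\theta}_n-\theta\|\right)$, from which a stabilization argument yields $\|\widetilde{\Delta}_n\|^2=O(\ln n/n)$. Your squared-norm route with ``comparison with the deterministic recursion $v_{n+1}=v_n(1-2/n)+C/n^2$'' would give $O(1/n)$ in expectation but does not by itself produce the almost-sure $\ln n/n$; you would still need to isolate the martingale contribution explicitly, which is effectively what the paper's decomposition does from the start.

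In short: your plan is sound, but the paper's scalar-extraction trick is the cleaner route and makes the step you call ``delicate'' unnecessary.
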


\subsection{Estimating the variance $\sigma^{2}$}
We now focus on the estimation of the variance $\sigma^{2}$ of the errors. First, consider the sequence of predictors $\left( \hat{Y}_{n} \right)_{n\geq 1}$  of $Y$ defined for all $n \geq 1$ by
\[
\hat{Y}_{n} = f \left( X_{n} , \overline{\theta}_{n-1}\right).
\]
Then, one can estimate $\sigma^{2}$ by the recursive estimate $\hat{\sigma}_{n}$ defined for all $n \geq 1$ by
\[
\hat{\sigma}_{n} = \frac{1}{n} \sum_{k=1}^{n} \left( \hat{Y}_{k} - Y_{k} \right)^{2}. 
\]
\begin{cor}\label{estsigma}
Assume assumptions \textbf{(H1)} to \textbf{(H5)} hold and that $\epsilon$ admits a $4$-th order moment. Then, on $\Gamma_{\theta}$,
\[
\left| \hat{\sigma}_{n}^{2} - \sigma^{2} \right| = \mathcal{O} \left( \sqrt{\frac{\ln n}{n}} \right) \quad a.s., 
\]
and 
\[
\sqrt{n} \left( \hat{\sigma}_{n}^{2} - \sigma^{2} \right) \xrightarrow[n\to + \infty]{\mathcal{L}} \mathcal{N}\left( 0 , \mathbb{V}\left[ \epsilon^{2} \right] \right).
\]
\end{cor}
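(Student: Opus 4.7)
The plan is to decompose $\hat{\sigma}_n^2 - \sigma^2$ into an i.i.d.\ part driving both conclusions, plus two remainder terms controlled via the rate of Theorem~\ref{ratethetabar}. Setting $r_k := f(X_k,\overline{\theta}_{k-1}) - f(X_k,\theta)$, we have $\hat{Y}_k - Y_k = r_k - \epsilon_k$, so
\[
\hat{\sigma}_n^2 - \sigma^2 \;=\; \Bigl(\frac{1}{n}\sum_{k=1}^n\epsilon_k^2 - \sigma^2\Bigr) \;-\; \frac{2}{n}\sum_{k=1}^n r_k\epsilon_k \;+\; \frac{1}{n}\sum_{k=1}^n r_k^2.
\]
Since $\epsilon$ has a finite fourth moment, the variables $\epsilon_k^2$ are i.i.d.\ with mean $\sigma^2$ and finite variance $\mathbb{V}[\epsilon^2]$. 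The law of the iterated logarithm gives $\bigl|\frac{1}{n}\sum \epsilon_k^2 - \sigma^2\bigr| = \mathcal{O}(\sqrt{\ln\ln n/n})$ a.s., and the classical CLT yields $\sqrt{n}\bigl(\frac{1}{n}\sum \epsilon_k^2 - \sigma^2\bigr)\xrightarrow[n\to+\infty]{\mathcal{L}}\mathcal{N}(0,\mathbb{V}[\epsilon^2])$. It therefore suffices to show that the two remainder terms are $o(\sqrt{\ln n/n})$ a.s.\ and $o_\mathbb{P}(1/\sqrt{n})$.

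The key identity for handling the $r_k$'s is
\[
\mathbb{E}\bigl[r_k^2 \mid \mathcal{F}_{k-1}\bigr] \;=\; 2\bigl(G(\overline{\theta}_{k-1}) - G(\theta)\bigr),
\]
obtained from the independence of $\epsilon_k$ and $X_k$ together with $\mathbb{E}[\epsilon_k]=0$. On $\Gamma_\theta$, $\overline{\theta}_{k-1}\to\theta$, so assumption \textbf{(H4a)} combined with $\nabla G(\theta)=0$ gives $G(h)-G(\theta)\leq C\|h-\theta\|^2$ locally; plugging in the bound $\|\overline{\theta}_{k-1}-\theta\|^2 = \mathcal{O}(\ln k/k)$ provided by Theorem~\ref{ratethetabar} yields $\sum_{k=1}^n \mathbb{E}[r_k^2\mid\mathcal{F}_{k-1}] = \mathcal{O}((\ln n)^2)$ a.s. Using the mean-value form $r_k = \nabla_h f(X_k,\xi_k)^\top(\overline{\theta}_{k-1}-\theta)$ together with \textbf{(H1b)}, we get $\mathbb{E}[r_k^4\mid\mathcal{F}_{k-1}] \leq C''\|\overline{\theta}_{k-1}-\theta\|^4$, so applying a strong law of large numbers for martingale differences (such as those collected in Section~\ref{sectionmartingales}) to $r_k^2-\mathbb{E}[r_k^2\mid\mathcal{F}_{k-1}]$ transfers the estimate to $\sum_{k=1}^n r_k^2 = \mathcal{O}((\ln n)^2)$ a.s. Hence $\tfrac{1}{n}\sum r_k^2 = \mathcal{O}((\ln n)^2/n) = o(\sqrt{\ln n/n})$ and $\tfrac{1}{\sqrt{n}}\sum r_k^2 = o(1)$.

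For the cross term, set $M_n := \sum_{k=1}^n r_k\epsilon_k$ and $\mathcal{G}_k := \sigma(\mathcal{F}_{k-1},X_k)$: since $r_k$ is $\mathcal{G}_k$-measurable and $\mathbb{E}[\epsilon_k\mid\mathcal{G}_k]=0$, $(M_n)$ is a $(\mathcal{G}_k)$-martingale with predictable quadratic variation $\langle M\rangle_n = \sigma^2\sum_k r_k^2 = \mathcal{O}((\ln n)^2)$ a.s. The standard almost-sure upper bound for square-integrable martingales gives $|M_n| = \mathcal{O}\bigl(\sqrt{\langle M\rangle_n\ln\langle M\rangle_n}\bigr) = \mathcal{O}(\ln n\sqrt{\ln\ln n})$ a.s., so $\tfrac{2}{n}|M_n| = o(\sqrt{\ln n/n})$ and $\tfrac{2}{\sqrt n}|M_n| = o(1)$. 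Combining the three estimates delivers the almost-sure rate, while Slutsky's theorem combined with the CLT on the $\epsilon_k^2$ term delivers the Gaussian limit.

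The main obstacle will be the passage from the conditional-expectation bound to the almost-sure bound on $\sum_{k=1}^n r_k^2$: \textbf{(H1b)} provides only an $L^4$ bound on $\nabla_h f$ rather than a uniform pointwise constant, so one must carefully verify the fourth-moment hypothesis of the martingale strong law applied to $r_k^2 - \mathbb{E}[r_k^2\mid\mathcal{F}_{k-1}]$ (here the local behaviour of $\xi_k$ needs to be controlled via the localization $\overline{\theta}_{k-1}\to\theta$ on $\Gamma_\theta$). Once this step is in place, the rest of the argument reduces to routine manipulations and Slutsky's lemma.
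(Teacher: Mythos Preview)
Your proof is correct and follows essentially the same three-term decomposition and strategy as the paper: control $\frac{1}{n}\sum r_k^2$ via Taylor, \textbf{(H1b)} and the rate of Theorem~\ref{ratethetabar}, control the cross term as a martingale, and apply a CLT/SLLN to the $\epsilon_k^2-\sigma^2$ part. The only differences are cosmetic (you use the i.i.d.\ CLT/LIL and the identity $\mathbb{E}[r_k^2\mid\mathcal{F}_{k-1}]=2(G(\overline\theta_{k-1})-G(\theta))$ where the paper uses the martingale CLT/SLLN and a direct Taylor bound via a shifted filtration $\mathcal{F}_n^{(1)}$), and the technical obstacle you flag about the mean-value point $\xi_k$ depending on $X_k$ is equally present---and equally glossed over---in the paper's own proof.
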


\section{Simulation study}\label{sectionsimu}
In this section, we present a short simulation study to illustrate convergence results 
given in Section \ref{sectiontheo}.
Simulations were carried out using the statistical software \Rlogo.

\subsection{The model}
Consider the following model 
\[
Y = \theta_1 \left( 1- \exp (- \theta_2 X)\right) +\epsilon, 
\]
with $\theta = (\theta_1, \theta_2)^T = (21,12)^T$, $X \sim \mathcal{U}([0,1])$, and $\epsilon \sim \mathcal{N}( 0, 1)$. 
For sure, this model is very simple. 
However, it allows us to explore 
the behaviour of the different algorithms presented in this paper by comparing our methods with the stochastic gradient algorithm and its averaged version and by looking at the influence of the step sequence on the estimates. 
In that special case, matrix $L(\theta)=\nabla^2 G(\theta)$ is equal to:
\[
L ( \theta) = \mathbb{E}\left[ \begin{pmatrix}
\left( 1- \exp \left( - \theta_{2}X \right) \right)^{2} & \theta_{1}X \left( 1- \exp \left( - \theta_{2}X \right) \right) \exp \left( - \theta_{2}X \right) \\
\theta_{1}X \left( 1- \exp \left( - \theta_{2}X \right) \right) \exp \left( - \theta_{2}X \right) & \theta_{1}^{2}X^{2} \exp \left( - 2 \theta_{2}X\right) 
\end{pmatrix} \right] .
\]
Then, taking $\theta = ( 21 , 12 )^T$, we obtain
\[
L ( 21 , 12) \simeq \begin{pmatrix}
0.875 & 0.109 \\
0.109 & 0.063
\end{pmatrix}
\]
and in particular, the eigenvalues are about equal (in decreasing order) to $0.889$ and $0.049$. 

\subsection{Comparison with stochastic gradient algorithm and influence of the step-sequence}
To avoid  some computational problems,
we consider projected versions of the different algorithms: more precisely, each algorithm is projected on the ball of center $(21, 12)^T$ and of radius 12.

For each algorithm, we calculate the mean squared error
based on 100  independent samples of size $n=10\ 000$, with initialization $\theta_{0} = \theta + 10 U$, where $U$ follows an uniform law on the unit sphere of $\mathbb{R}^{2}$. 

We can see in Tables \ref{table1} and \ref{tablegrad2}
that Gauss-Newton methods perform globally better than gradient descents. 
This is certainly due to the fact that the eigenvalues of $L(\theta)$ are at quite different scales, 
so that the step sequence in gradient descent is less adapted to each direction. Remark that we could have been less fair play, considering an example where the eigenvalues of $L(\theta)$ would have been at most sensitively different scales. 
Nevertheless, without surprise, the estimates seem to be quite sensitive to the choices of the parameters $c_{\alpha}$ and $\alpha$.
\begin{table}[H] 
\centering
\small{\begin{tabular}{r|rrrr}

 \backslashbox{$c_{\alpha}$}{$\alpha$}  & 0.55 & 0.66 & 0.75  & 0.9 \\ 
  \hline
 0.1 & 0.0241 & 0.3873 & 2.1115 & 9.1682 \\ 
  0.5 & 0.1229 & 0.0130 & 0.0057 & 0.0171 \\ 
   1 & 0.0732 & 0.0257 & 0.0113 & 0.0030 \\ 
   2 & 0.1203 & 0.0503 & 0.0213 & 0.0073 \\ 
   5 & 0.4029 & 0.1647 & 0.0762 & 0.0168 \\ 
 \end{tabular}  \quad \begin{tabular}{r|rrrr}
 
\backslashbox{$c_{\alpha}$}{$\alpha$}  & 0.55 & 0.66 & 0.75 & 0.9 \\ 
  \hline
 0.1 & 0.3280 & 1.3746 & 4.1304 & 13.5156 \\ 
   0.5 & 0.1393 & 0.0078 & 0.0135 & 0.0928 \\ 
  1 & 0.0068 & 0.0049 & 0.0085 & 0.0440 \\ 
  2 & 0.0104 & 0.0058 & 0.0052 & 0.0082 \\ 
   5 & 0.2076 & 0.0260 & 0.0067 & 0.0027 \\ 
\end{tabular}}
\caption{\label{table1}
 Mean squared errors of Stochastic Newton estimates defined by \eqref{defsgnalpha} (on the left) and its averaged version defined by \eqref{asgn} (on the right) for different parameters $\alpha$ and $c_{\alpha}$, for $n=10\ 000$. }
\end{table}

\begin{table}[H] 
\centering
\small{\begin{tabular}{r|rrrr}
 
\backslashbox{$c_{\alpha}$}{$\alpha$}  & 0.55 & 0.66 & 0.75  & 0.9 \\ 
  \hline
 0.1 & 15.40 & 24.69 & 35.32 & 40.42 \\ 
   0.5 & 1.30 & 8.36 & 15.90 & 22.80 \\ 
   1 & 0.24 & 2.66 & 9.76 & 28.52 \\ 
   2 & 0.29 & 0.06 & 3.02 & 22.64 \\ 
   5 & 0.26 & 0.02 & 0.01 & 3.89 \\ 
\end{tabular}  \quad \begin{tabular}{r|rrrr}
\backslashbox{$c_{\alpha}$}{$\alpha$}  & 0.55 & 0.66 & 0.75 & 0.9 \\ 
  \hline
 0.1 & 19.78 & 28.03 & 38.49 & 43.01 \\ 
 0.5 & 4.49 & 12.48 & 18.79 & 24.20 \\ 
  1 & 1.16 & 7.28 & 13.68 & 30.84 \\ 
   2 & 0.41 & 1.75 & 7.46 & 26.90 \\ 
  5 & 0.27 & 0.02 & 0.18 & 7.45 \\ 
\end{tabular}}
\caption{\label{tablegrad2} Mean squared errors of Stochastic Gradient estimates with  step $c_{\alpha}n^{-\alpha}$ (on the left) and its averaged version (on the right) for different parameters $\alpha$ and $c_{\alpha}$, for $n=10\ 000$. }
\end{table}
To complete this study, let us now examine 
the behaviour of the mean squared error in function of the sample size
  for the standard Stochastic Gauss-Newton algorithm ("SN"), and its averaged version ("ASN"), for the Stochastic Gauss-Newton algorithm defined by \eqref{defsgnalpha} ("ND") 
   and for the stochastic gradient algorithm ("SGD") as well as its averaged version ("ASGD"). The stochastic Gauss-Newton  algorithms, has been computed with $c_{\beta}=0$ and $S_0 = I_2$.
For the averaged stochastic Newton algorithm  a step sequence of the form $c_{\alpha}n^{-\alpha}$ with $c_{\alpha}=1$ and $\alpha = 0.66$ has been chosen, while we have taken $c_{\alpha} = 5$ and $\alpha = 0.66$ for the stochastic gradient algorithm and its averaged version.
Finally, we have considered three different initializations: $\theta_{0} = \theta + r_{0}Z$, where $Z $ follows an uniform law on the unit sphere of $\mathbb{R}^{2}$ and  $r_{0} = 1,5,12$. 

\begin{figure}[H]\centering
\includegraphics[scale=1]{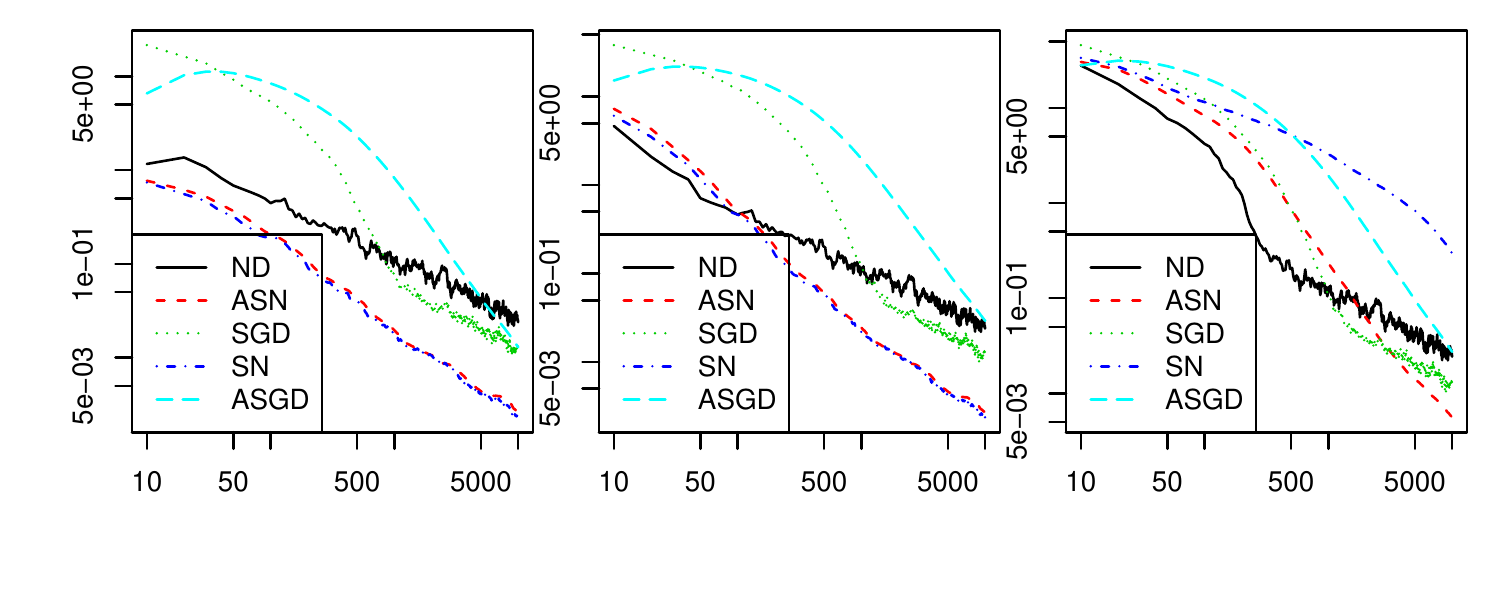}
\caption{Evolution of the mean squared error in relation with the sample size with, from the left to the right, $r_{0} = 1,5,12$. \label{newtonplot}}
\end{figure}

One can see in Figure \ref{newtonplot}
that stochastic Gauss-Newton algorithms perform better than gradient descents and their averaged version for quite good initialization. 
As explained before, this is due to the fact that stochastic Gauss-Newton methods enable to adapt the step sequence to each direction. Furthermore, one can see that when we have a quite good initialization, Stochastic Gauss-Newton and the averaged version give similar results. Nevertheless, when we deal with a bad initialization, to take a step sequence of the form $c_{\alpha}n^{-\alpha}$ enables the estimates to move faster, so that the averaged Gauss Newton estimates have better results compare to the non averaged version. Finally, on Figure~\ref{figboxplot}, one can see that Gauss-Newton methods globally perform better than gradient methods, but there are some bad trajectories due to bad initialization. 

\begin{figure}[H]\centering
\includegraphics[scale=0.8]{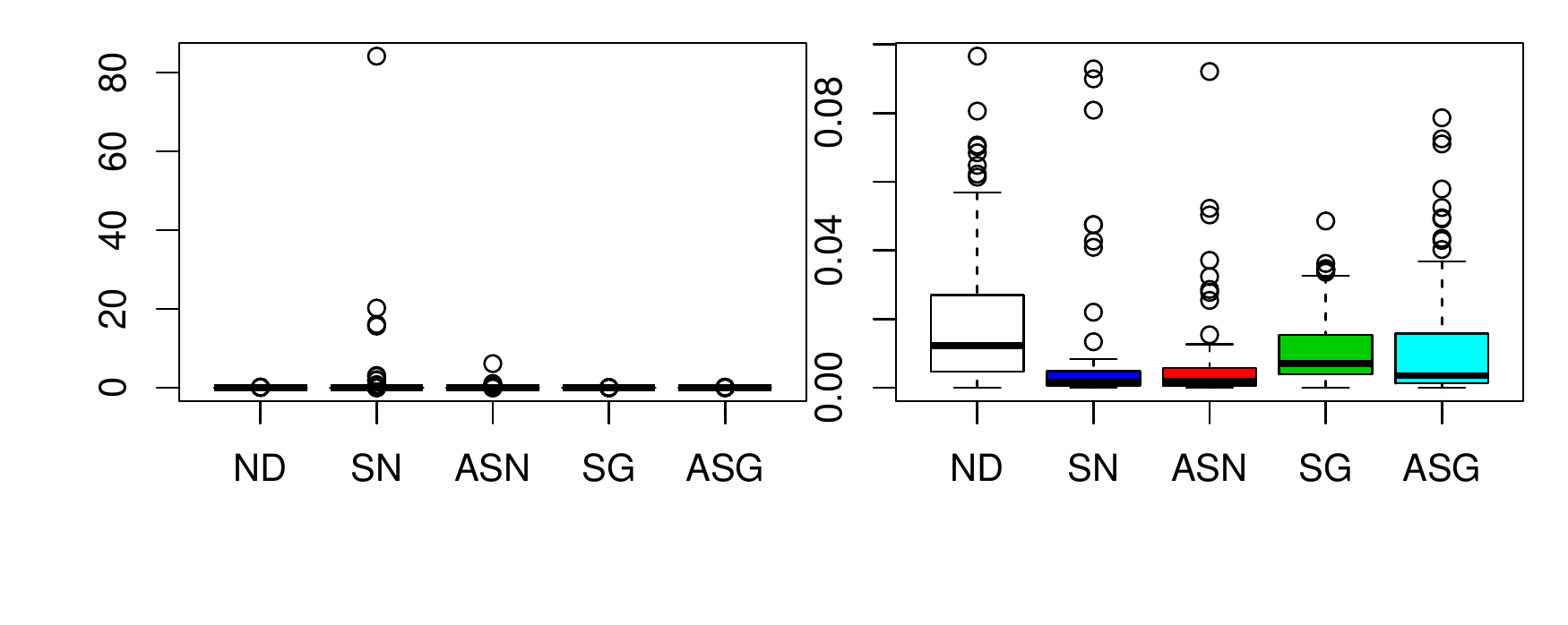}
\caption{Mean squared errors for the different methods  for $n=10\ 000$ and $r_{0} = 12$. \label{figboxplot}}
\end{figure}

\subsection{Influence of $\beta_{n}$}
We now focus on the influence of $\beta_n$ on the behaviour of the estimates. All the results are calculated from $100$ independent samples of size $n$. 
In table~\ref{tablebeta}, we have chosen $c_{\alpha}=0.66$ and $c=1$, and initialize the algorithms taking $\theta_{0} = \theta + 5 U$. Choosing small $c_{\beta}$ leads to goods results, which, without surprise, suggests that the use of the term $\sum_{k=1}^{n}\beta_{k}Z_{k}Z_{k}^{T}$ is only theoretical, but has no use in practice.
\begin{table}[H]
\centering
\scalebox{0.92}{
\scriptsize{\begin{tabular}{r|rrrr}
 \backslashbox{$c_{\beta}$}{$\beta$}  & 0.01 & 0.08 & 0.2 & 0.5 \\ 
  \hline
 $10^{-10}$ & 2.59 & 3.12 & 2.84 & 2.66 \\ 
 $10^{-5}$ & 2.59 & 2.90 & 2.42 & 1.88 \\ 
 $10^{-2}$ & 2.11 & 2.32 & 2.23 & 1.83 \\ 
 $10^{-1}$ & 0.69 & 1.09 & 1.98 & 2.64 \\ 
 $1$ & 4.04 & 0.61 & 0.66 & 2.06 \\ 
\end{tabular}
 \quad  \begin{tabular}{r|rrrr}
 \backslashbox{$c_{\beta}$}{$\beta$}  & 0.01 & 0.08 & 0.2 & 0.5 \\ 
  \hline
 $10^{-10}$ & 0.24 & 0.22 & 0.30 & 0.17 \\ 
 $10^{-5}$ & 0.29 & 0.26 & 0.20 & 0.23 \\ 
  $10^{-2}$ & 0.22 & 0.26 & 0.23 & 0.32 \\ 
   $10^{-1}$ & 0.25 & 0.24 & 0.21 & 0.27 \\ 
  $1$ & 26.37 & 11.19 & 1.46 & 0.30 \\ 
\end{tabular} \quad \begin{tabular}{r|rrrr}
 \backslashbox{$c_{\beta}$}{$\beta$} & 0.01 & 0.08 & 0.2 & 0.5 \\ 
  \hline
 $10^{-10}$ & 0.22 & 0.21 & 0.27 & 0.19 \\ 
 $10^{-5}$ & 0.27 & 0.24 & 0.19 & 0.21 \\ 
 $10^{-2}$ & 0.25 & 0.23 & 0.22 & 0.28 \\ 
 $10^{-1}$ & 11.77 & 3.89 & 0.85 & 0.26 \\ 
 $1$ & 444.61 & 409.27 & 184.09 & 9.77 \\ 
\end{tabular}
}}
\caption{Mean squared errors ($.10^{-2}$) of, from left to right, Stochastic Newton estimates with $c_{\alpha}n^{-\alpha}$ defined by \eqref{defsgnalpha}, its averaged version defined by \eqref{asgn} and Stochastic Newton estimates definded by \eqref{defsgn} for different values of $\beta$ and $c_{\beta}$. }
\label{tablebeta} 
\end{table}

\subsection{Asymptotic efficiency}

We now illustrate the asymptotic normality of the estimates. 
In order to consider all the components of parameter $\theta$,
we shall  in fact examine
the two following central limit theorem.
\[
C_n := \left( \widetilde{\theta}_{n}  - \theta \right)^T\widetilde{H}_n \left( \widetilde{\theta}_n - \theta \right) \xrightarrow[n\to + \infty]{\mathcal{L}} \chi_2^2 \quad \quad \text{and} \quad \quad \overline{C}_n := \left( \overline{\theta}_{n}  - \theta \right)^{T}S_{n} \left( \overline{\theta}_{n} - \theta \right) \xrightarrow[n\to + \infty]{\mathcal{L}} \chi_{2}^{2}.
\] 
These two results are straightforward applications of Theorems \ref{ratethetabar} and \ref{theothetatilde}, taking $\sigma^2 = 1$.

From $1000$ independent samples of size $n$,
and using the kernel method, we estimate the probability density function (pdf for short)  of $C_n$ and $\overline{C}_n$
that we compare to the pdf of a chi-squared with 2 degrees of freedom (df).
Estimates $\widetilde{\theta}_{n}$ and $\overline{\theta}_n$ are computed taking
$\theta_{0} = \theta + U$, $c_{\alpha} = 1$ and $\alpha = 0.66$.  

One can observe in Figure~\ref{chisq} that the estimated densities are quite similar to the   chi-squared pdf. Remark that a Kolmogorov-Smirnov test leads to $p$-values equal to $0.1259$ for $C_{n}$ and $0.33$ for $\overline{C}_{n}$. Therefore, the gaussian approximation provided by the TLC given by \ref{ratethetabar} and \ref{theothetatilde} is pretty good, and so, even for quite moderate sample sizes.

\begin{figure}[H]\centering
\includegraphics[scale=0.6]{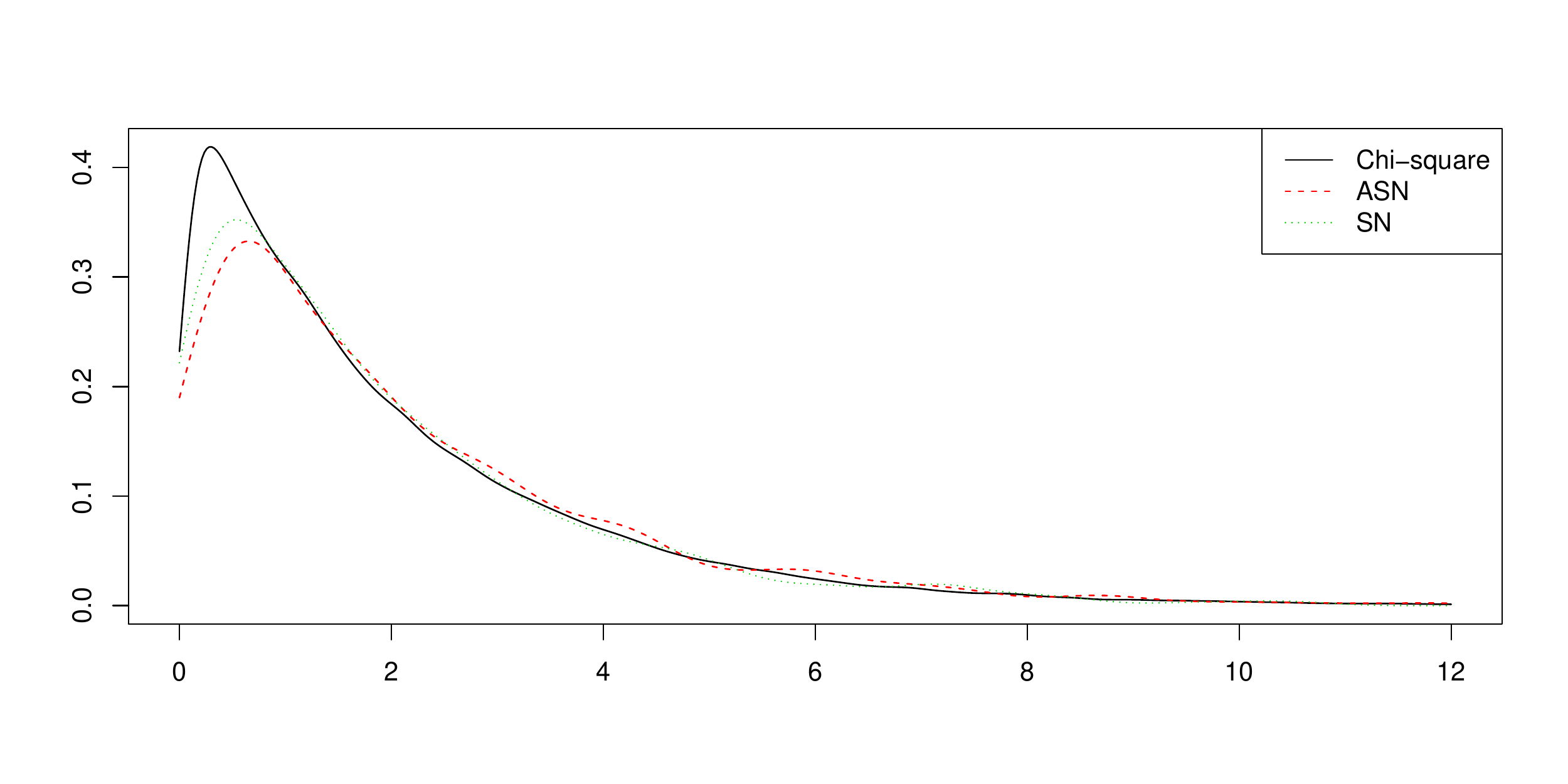}
\caption{Density function of a Chi squared with 2 df (in black)
and kernel-based density estimates of $C_{n}$ (in green) and $\overline{C}_{n}$ (in red) for $n= 5000$.}
\label{chisq} 
\end{figure}

\section{Proofs}\label{sectionproof}
\subsection{Proofs of Theorem \ref{consistency} and Corollary \ref{corconvsn}}

\begin{lem}\label{superlem}
Assuming \textbf{(H1)} and \textbf{(H2)} and taking $c_{\beta}> 0$, it comes
\[
\lambda_{\max} \left( S_{n}^{-1} \right) = \mathcal{O} \left( n^{\beta-1} \right) \quad a.s \quad \text{and} \quad \lambda_{\max} \left( S_{n} \right) =\mathcal{O}(n)\quad  a.s.
\]
\end{lem}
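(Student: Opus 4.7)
The plan is to use Riccati's formula twice per update to obtain the explicit representation
\[
S_n = S_0 + \sum_{i=1}^n \overline{\Phi}_i \overline{\Phi}_i^T + c_\beta \sum_{i=1}^n \frac{1}{i^\beta} Z_i Z_i^T,
\]
which is the key object. The two bounds then follow from controlling the eigenvalues of each piece separately: $\lambda_{\max}(S_n)$ is essentially controlled by \textbf{(H1b)} and $\lambda_{\min}(S_n)$ is controlled from below by the Gaussian regularising term alone.

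First I would establish the bound on $\lambda_{\max}(S_n)$. Since $S_0$, $\overline{\Phi}_i \overline{\Phi}_i^T$ and $Z_i Z_i^T$ are symmetric positive semidefinite, the operator norm is dominated by traces, giving
\[
\lambda_{\max}(S_n) \le \|S_0\|_{\mathrm{op}} + \sum_{i=1}^n \|\overline{\Phi}_i\|^2 + c_\beta \sum_{i=1}^n i^{-\beta} \|Z_i\|^2.
\]
By \textbf{(H1b)}, $\mathbb{E}[\|\overline{\Phi}_i\|^4\mid \mathcal{F}_{i-1}]\le C''$, so $\mathbb{E}[\|\overline{\Phi}_i\|^2\mid\mathcal{F}_{i-1}] \le \sqrt{C''}$. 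Decomposing $\|\overline{\Phi}_i\|^2$ into its conditional expectation plus a square-integrable martingale difference $\xi_i$ (with $\mathbb{E}[\xi_i^2\mid\mathcal{F}_{i-1}]\le C''$), the Kronecker argument from Section \ref{sectionmartingales} gives $n^{-1}\sum_i \xi_i \to 0$ a.s., hence $\sum_{i=1}^n \|\overline{\Phi}_i\|^2 = O(n)$ a.s. For the $Z$-sum, $\mathbb{E}\|Z_i\|^2 = q$ and the same martingale SLLN gives $\sum_i i^{-\beta}\|Z_i\|^2 = O(n^{1-\beta}) = o(n)$ a.s. Summing these yields $\lambda_{\max}(S_n)=O(n)$ a.s.

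For the harder bound on $\lambda_{\max}(S_n^{-1})=1/\lambda_{\min}(S_n)$, I would discard the positive semidefinite contribution of $\sum \overline{\Phi}_i\overline{\Phi}_i^T$ and reduce to
\[
\lambda_{\min}(S_n) \ge c_\beta\,\lambda_{\min}\!\left(\sum_{i=1}^n i^{-\beta} Z_i Z_i^T\right).
\]
Set $A_n := \sum_{i=1}^n i^{-\beta}(Z_i Z_i^T - I_q)$, a martingale in $q\times q$ matrices with independent, centred increments. Since $Z_i$ is standard Gaussian, $\mathbb{E}\|Z_i Z_i^T - I_q\|_F^2$ is a finite constant, hence $\mathbb{E}\|A_n\|_F^2 = O\bigl(\sum_{i=1}^n i^{-2\beta}\bigr) = O(n^{1-2\beta})$ because $\beta < \alpha - 1/2 < 1/2$. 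Since $\sum_i i^{-2\beta}/i^{2(1-\beta)} = \sum_i i^{-2} < \infty$, a Kronecker-type martingale strong law (again Section \ref{sectionmartingales}) gives $n^{-(1-\beta)}\|A_n\|_F \to 0$ a.s. Combined with $\sum_{i=1}^n i^{-\beta} = n^{1-\beta}/(1-\beta) + O(1)$, this yields, for $n$ large enough and a.s.,
\[
\lambda_{\min}\!\left(\sum_{i=1}^n i^{-\beta} Z_i Z_i^T\right) \ge \frac{n^{1-\beta}}{1-\beta} - \|A_n\|_F \ge c'\, n^{1-\beta},
\]
and therefore $\lambda_{\max}(S_n^{-1}) = O(n^{\beta-1})$ a.s.

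The main obstacle is this last step: showing that the Gaussian regularising sum $\sum_i i^{-\beta} Z_i Z_i^T$ is \emph{strongly} non-degenerate, i.e.~its smallest eigenvalue grows at the full deterministic rate $n^{1-\beta}$ a.s.~rather than merely in expectation. This is precisely what motivates the injection of the $Z_i Z_i^T$ terms into the algorithm: without them, $\lambda_{\min}\bigl(\sum \overline{\Phi}_i\overline{\Phi}_i^T\bigr)$ could degenerate when \textbf{(H*)} fails. The remaining work is essentially bookkeeping using \textbf{(H1)} and a standard martingale SLLN.
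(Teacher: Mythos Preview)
Your proof is correct and follows essentially the same route as the paper: for the lower bound on $\lambda_{\min}(S_n)$ both arguments discard the $\overline{\Phi}_i\overline{\Phi}_i^T$ contribution and use a martingale strong law to show $\bigl(\sum_{i\le n} i^{-\beta}\bigr)^{-1}\sum_{i\le n} i^{-\beta} Z_iZ_i^T \to I_q$ a.s.; for the upper bound both decompose into conditional expectation plus martingale difference and invoke \textbf{(H1b)}. The only cosmetic difference is that the paper works with the matrix-valued martingale $\Xi_i=\overline{\Phi}_i\overline{\Phi}_i^T-\mathbb{E}[\overline{\Phi}_i\overline{\Phi}_i^T\mid\mathcal{F}_{i-1}]$ and bounds $\|S_n\|_F$, whereas you pass to the scalar trace $\|\overline{\Phi}_i\|^2$ directly---your version is slightly more elementary but equivalent in substance.
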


\begin{proof}[Proof of Lemma \ref{superlem}]
First, note that 
\[
\lambda_{\min}\left( S_{n} \right) \geq \lambda_{\min} \left(\sum_{i=1}^{n} \frac{c_{\beta}}{i^{\beta}} Z_i Z_i^{T} \right),
\]
and one can check that 
\begin{equation}
\label{vitzi}
\frac{1}{\sum_{i=1}^{n}\frac{c_{\beta}}{i^{\beta}}} \sum_{i=1}^{n} \frac{c_{\beta}}{i^{\beta}}Z_{i}Z_{i}^{T} \xrightarrow[n\to + \infty]{a.s} I_{q} .
\end{equation}
Since 
\[\sum_{i=1}^{n} \frac{c_{\beta}}{i^{\beta}} \sim \frac{c_{\beta}}{1-\beta}n^{1-\beta},\] it comes
\[
\lambda_{\max} \left( S_{n}^{-1} \right) = \mathcal{O} \left( n^{\beta -1 } \right) \quad a.s.
\]
Let us now give a bound of $\lambda_{\max}\left( S_{n} \right)$. First, remark that $S_{n}$ can be written as
\begin{equation}
\label{decsn} S_{n} = S_{0} + \sum_{i=1}^{n} \mathbb{E}\left[ \overline{\Phi}_{i}\overline{\Phi}_{i}^{T} |\mathcal{F}_{i-1} \right] + \sum_{i=1} \Xi_{i} + \sum_{i=1}^{n} \frac{c_{\beta}}{i^{\beta}}Z_{i}Z_{i}^{T},
\end{equation}
where $\Xi_{i} := \overline{\Phi}_{i}\overline{\Phi}_{i}^{T} - \mathbb{E}\left[ \overline{\Phi}_{i}\overline{\Phi}_{i}^{T} |\mathcal{F}_{i-1} \right]$ and $\left( \mathcal{F}_{i} \right)$ is the $\sigma$-algebra generated by the sample, i.e $\mathcal{F}_{i} := \sigma \left( \left( X_{1}, Y_{1} \right) , \ldots ,\left( X_{i} , Y_{i} \right) \right)$. Thanks to assumption \textbf{(H1b)}, $\mathbb{E}\left[ \left\| \overline{\Phi}_{i}\overline{\Phi}_{i}^{T} \right\|_{F}^{2} |\mathcal{F}_{i-1} \right] \leq C''$, and applying Theorem~\ref{theomartmoy}, it comes for all positive constant $\delta > 0$,
\begin{equation}\label{vitXi}
\left\| \sum_{i=1}^{n} \Xi_{i} \right\|_{F}^{2} = o \left( n (\ln n)^{1+\delta} \right) \quad a.s,
\end{equation}
where $\left\| . \right\|_{F}$ is the Frobenius norm. Then,
\[
\left\| S_{n} \right\|_{F} = \mathcal{O} \left( \max \left( \left\| S_{0} \right\|_{F}, C''n , \sqrt{n (\ln n)^{1+\delta}} , n^{1-\beta} \right)\right) \quad a.s,  
\]
which concludes the proof.
\end{proof}

\begin{proof}[Proof of Theorem \ref{consistency}]
 With the help of a Taylor's decomposition, it comes
\[
G \left( \theta_{n+1} \right) = G \left( \theta_{n} \right) + \nabla G \left( \theta_{n} \right)^{T}\left( \theta_{n+1} - \theta_{n} \right) + \frac{1}{2}\left( \theta_{n+1} - \theta_{n} \right)^{T} \int_{0}^{1} \nabla^{2} G \left( \theta_{n+1} + t \left( \theta_{n+1} - \theta_{n} \right) \right) dt \left( \theta_{n+1} - \theta_{n} \right) .
\]
Then, assumption \textbf{(H2)} yields
\[
G \left( \theta_{n+1} \right) \leq G \left( \theta_{n} \right) + \nabla G \left( \theta_{n} \right)^{T}\left( \theta_{n+1} - \theta_{n} \right) + \frac{C'}{2}\left\| \theta_{n+1} - \theta_{n} \right\|^{2}.
\]
Replacing $\theta_{n+1}$,
\begin{align*}
G & \left( \theta_{n+1} \right)\leq G \left( \theta_{n} \right) - \gamma_{n+1}\nabla G \left( \theta_{n} \right)^{T}\overline{S}_{n}^{-1}\nabla_{h} g \left( X_{n+1} , Y_{n+1} , \theta_{n} \right) + \frac{C'}{2}\gamma_{n+1}^{2} \left\| \overline{S}_{n}^{-1} \nabla_{h} g \left( X_{n+1} , Y_{n+1} ,\theta_{n} \right) \right\|^{2}  \\
& \leq G \left( \theta_{n} \right) - \gamma_{n+1} \nabla G \left( \theta_{n} \right)^{T} \overline{S}_{n}^{-1} \nabla_{h} g \left( X_{n+1} , Y_{n+1} , \theta_{n} \right) + \frac{C'}{2}\gamma_{n+1}^{2} \left( \lambda_{\max}  \left( \overline{S}_{n}^{-1} \right) \right)^{2} \left\|  \nabla_{h} g \left( X_{n+1} , Y_{n+1} ,\theta_{n} \right) \right\|^{2}.
\end{align*}
Assumption \textbf{(H1a)} leads to
\begin{align*}
\mathbb{E}\left[ G \left( \theta_{n+1} \right) |\mathcal{F}_{n} \right] & \leq G \left( \theta_{n} \right) - \gamma_{n+1} \nabla G \left( \theta_{n} \right)^{T} \overline{S}_{n}^{-1} \nabla G \left( \theta_{n} \right)  + \frac{CC'}{2}\gamma_{n+1}^{2} \left( \lambda_{\max}  \left( \overline{S}_{n}^{-1} \right) \right)^{2} \\
& \leq G \left( \theta_{n} \right) - \gamma_{n+1}\lambda_{\min} \left( \overline{S}_{n}^{-1} \right) \left\| \nabla G \left( \theta_{n} \right) \right\|^{2} + \frac{CC'}{2}\gamma_{n+1}^{2}\left( \lambda_{\max}  \left( \overline{S}_{n}^{-1} \right) \right)^{2}.
\end{align*}
Thanks to Lemma \ref{superlem}, and since $\beta < \alpha - 1/2$, 
\[
\sum_{n\geq 1} \gamma_{n+1}^{2}\left(  \lambda_{\max} \left (\overline{S}_{n}^{-1} \right) \right)^{2} < + \infty \quad a.s,
\]
so that, applying Robbins-Siegmund Theorem (see \cite{Duf97} for instance), $G \left( \theta_{n} \right)$ converges almost surely to a finite random variable and
\[
\sum_{n\geq 1} \gamma_{n+1} \lambda_{\min} \left( \overline{S}_{n}^{-1} \right)\left\| \nabla G \left( \theta_{n} \right) \right\|^{2} = \sum_{n\geq 1 } \gamma_{n+1} \lambda_{\max} \left( \overline{S}_{n} \right)^{-1} \left\| \nabla G \left( \theta_{n} \right) \right\|^{2} < + \infty \quad a.s.
\]
Lemma~\ref{superlem} implies $\sum_{n\geq 1} \gamma_{n+1} \lambda_{\max} \left( \overline{S}_{n}\right)^{-1} = + \infty$ almost surely, so that there is alsmot surely a subsequence $\theta_{\varphi_{n}}$ such that $\left\| \nabla G \left( \theta_{\varphi_{n}} \right) \right\|^{2} $ converges to $0$.

\end{proof}

\begin{proof}[Proof of Corollary \ref{corconvsn}] Let us give the convergence of each term in decomposition (\ref{decsn}). Since $\beta > 0$ and applying (\ref{vitXi}), it comes
\[
\frac{1}{n}\left\| \sum_{i=1}^{n}\Xi_{i} \right\|_{F} \xrightarrow[n\to + \infty]{a.s} 0 \quad a.s  \quad \text{and} \quad \frac{1}{n}\left\| \sum_{i=1}^{n} \frac{c_{\beta}}{i^{\beta}}Z_{i}Z_{i}^{T} \right\|_{F} \xrightarrow[n\to + \infty]{a.s} 0 \quad a.s.
\]
Finally, since $\overline{\theta}_{n}$ converges almost surely to $\theta$, assumption \textbf{(H3)} together with Toeplitz lemma yield
\[
\frac{1}{n}\sum_{i=1}^{n} \mathbb{E}\left[ \overline{\Phi}_{i}\overline{\Phi}_{i}^{T} |\mathcal{F}_{i-1} \right] \xrightarrow[n\to + \infty]{a.s} \mathbb{E}\left[ \nabla_{h}f \left( X, \theta \right) \nabla_{h} f\left( X , \theta \right)^{T} \right] = L(\theta).
\]
\end{proof}

\subsection{Proof of Theorem \ref{ratetheta}}
First, $\theta_{n+1}$ can be written as
\begin{equation}
\label{decxi} \theta_{n+1} - \theta = \theta_{n} - \theta -\gamma_{n+1}\overline{S}_{n}^{-1} \nabla G \left( \theta_{n} \right) + \gamma_{n+1}\overline{S}_{n}^{-1}\xi_{n+1},
\end{equation}
with $\xi_{n+1} :=\nabla G\left( \theta_{n} \right) - \nabla_{h} g \left( X_{n+1} , Y_{n+1} , \theta_{n} \right)$. Remark that $\left( \xi_{n} \right)$ is a sequence of martingale differences adapted to the filtration $\left( \mathcal{F}_{n} \right)$. Moreover, linearizing the gradient and noting $H = \nabla^{2}G(\theta)$, it comes
\begin{align}
\label{decareutiliser} \theta_{n+1} - \theta & = \theta_{n} - \theta - \gamma_{n+1}\overline{S}_{n}^{-1} H \left( \theta_{n} - \theta \right) + \gamma_{n+1}\overline{S}_{n}^{-1}\xi_{n+1} - \gamma_{n+1}\overline{S}_{n}^{-1}\delta_{n} \\
\notag & = \left( 1-\gamma_{n+1} \right)\left( \theta_{n} - \theta \right) + \gamma_{n+1}\left( H^{-1} -\overline{S}_{n}^{-1} \right) H \left( \theta_{n} - \theta \right)\\
\label{decdelta} & + \gamma_{n+1}\overline{S}_{n}^{-1}\xi_{n+1} -\gamma_{n+1}\overline{S}_{n}^{-1}\delta_{n},
\end{align}
where $\delta_{n} = \nabla G \left( \theta_{n} \right) - H \left( \theta_{n} - \theta \right) $ is the remainder term in the Taylor's decomposition of the gradient. By induction, one can check that for all $n \geq 1$,
\begin{align}
\notag \theta_{n} - \theta & = \beta_{n,0} \left( \theta_{0} - \theta \right) +  \sum_{k=0}^{n-1} \beta_{n,k+1}\gamma_{k+1} \left( H^{-1} - \overline{S}_{k}^{-1}\right) H \left( \theta_{k} - \theta \right) -  \sum_{k=0}^{n-1} \beta_{n,k+1}\gamma_{k+1}\overline{S}_{k}^{-1}\delta_{k} \\
\label{decbeta} &  +  \sum_{k=0}^{n-1} \beta_{n,k+1} \gamma_{k+1}\overline{S}_{k}^{-1}\xi_{k+1} ,
\end{align}
with, for all $k,n \geq 0$ and $k \leq n$, \[\beta_{n,k} = \prod_{j=k+1}^{n} \left( 1-\gamma_{j} \right) \quad \mbox{and} \quad \beta_{n,n}=1.\]
Applying Theorem~\ref{theomartbeta},
\begin{equation}
\label{lembeta}\left\| \sum_{k=0}^{n-1}\beta_{n,k+1}\gamma_{k+1}\overline{S}_{k}^{-1} \xi_{k+1} \right\|^{2} = \mathcal{O} \left( \frac{\ln n}{n^{\alpha}} \right) \quad a.s.
\end{equation}
We can now prove Theorem~\ref{ratetheta}.
\begin{proof}[Proof of equation~\ref{ratethetaas}]
The aim is to give the rate of convergence of each term of decomposition (\ref{decbeta}). The rate of the martingale term is given  by equation \eqref{lembeta}.  Remark that there is a rank $n_{\alpha}$ such that for all $n \geq n_{\alpha}$, we have $\gamma_{n}\leq 1$, so that, for all $n \geq n_{\alpha}$,
\begin{align*}
\left\| \beta_{n,0}\left( \theta_{0} - \theta \right) \right\| & \leq \left\| \theta_{0} - \theta \right\| \prod_{i=1}^{n_{\alpha}-1} \left| 1- \gamma_{i+1} \right| \prod_{i=n_{\alpha}} \left( 1-\gamma_{i+1} \right) \\
& \leq \left\| \theta_{0} - \theta \right\| \prod_{i=1}^{n_{\alpha}-1} \left| 1- \gamma_{i+1} \right| \exp \left( -\sum_{i=n_{\alpha}}^{n}\gamma_{i+1} \right) .
\end{align*}
Since $\alpha < 1$, this term converges at an exponential rate, and more precisely
\begin{equation}\label{vitexp}
\left\| \beta_{n,0}\left( \theta_{0} - \theta \right) \right\| = \mathcal{O} \left( \exp \left( - \frac{c_{\alpha}}{1-\alpha}n^{1-\alpha} \right) \right)  \quad a.s.
\end{equation}
Let us denote
\[
\Delta_{n} :=  \sum_{k=0}^{n-1}\beta_{n,k+1}\gamma_{k+1}\left( H^{-1} - \overline{S}_{k}^{-1} \right) H \left( \theta_{k} - \theta \right) -  \sum_{k=0}^{n} \beta_{n,k+1}\gamma_{k+1} \overline{S}_{k}^{-1}\delta_{k} .
\]
The aim is so to prove that this term is negligible. First, remark that the Taylor's decomposition of the gradient yields
\[
\delta_{n} = \int_{0}^{1} \left( \nabla^{2}G \left( \theta + t \left( \theta_{n} - \theta \right) \right) - H \right) \left( \theta_{n} - \theta \right) dt ,
\]
and thanks to assumption \textbf{(H4a)}, since $\theta_{n}$ converges almost surely to $\theta$ and by dominated convergence,
\[
\left\| \delta_{n} \right\| \leq \left\| \theta_{n} - \theta \right\| \int_{0}^{1} \left\| \nabla^{2}G \left( \theta + t \left( \theta_{n} - \theta \right) \right) - H \right\|_{op} dt = o \left( \left\| \theta_{n} - \theta \right\| \right) \quad a.s
\] 
In a similar way, since $\overline{S}_{n}^{-1}$ converges almost surely to $H^{-1}$,
\[
\left\| \left( H^{-1} - \overline{S}_{n}^{-1} \right) H \left( \theta_{n} - \theta \right) \right\| = o \left( \left\| \theta_{n} - \theta \right\| \right) \quad a.s.
\]
Moreover, since
\[
\Delta_{n+1} = \left( 1-\gamma_{n+1} \right) \Delta_{n} + \gamma_{n+1} \left( H^{-1} - \overline{S}_{n}^{-1} \right) H \left( \theta_{n} - \theta \right) - \gamma_{n+1}\overline{S}_{n}^{-1}\delta_{n},
\]
we have, 
\begin{align*}
\left\| \Delta_{n+1} \right\| & \leq \left| 1- \gamma_{n+1} \right| \left\| \Delta_{n} \right\| + o \left( n^{-\alpha} \left\| \theta_{n} - \theta \right\| \right) \quad a.s \\
& \leq \left| 1- \gamma_{n+1} \right| \left\| \Delta_{n} \right\| + o \left( n^{-\alpha} \left\| \Delta_{n} + \sum_{k=0}^{n-1} \beta_{n,k+1} \gamma_{k+1} \overline{S}_{k}^{-1} \xi_{k+1} + \beta_{n,0}\left( \theta_{0} - \theta \right) \right\| \right) \quad a.s.
\end{align*}
Then, since $\beta_{n,0}\left( \theta_{0} - \theta \right)$ converges at an exponential rate to $0$ and thanks to equation \eqref{lembeta}, 
\[
\left\| \Delta_{n+1} \right\| \leq \left( 1- \gamma_{n+1} \right) \left\| \Delta_{n} \right\| + o \left( \gamma_{n+1} \sqrt{\frac{\ln n}{n^{\alpha}}} + \gamma_{n+1}\left\| \Delta_{n} \right\| \right) \quad a.s ,
\]
and applying a stabilization Lemma (see \cite{Duf97} for instance),
\[
\left\| \Delta_{n} \right\| = \mathcal{O} \left( \sqrt{\frac{\ln n}{n^{\alpha}}}\right) \quad a.s
\]
which concludes the proof.
\end{proof}

\begin{proof}[Proof of equation \eqref{tlc}]
In order to get the asymptotic normality, we will apply the Central Limit Theorem in \cite{Jak88} to the martingale term in decomposition (\ref{decbeta}) and prove that other terms of this decomposition are negligible. The rate of convergence of the first term on the right-hand side of equality (\ref{decbeta}) is given by equality (\ref{vitexp}). Moreover, applying Theorem \ref{ratetheta} and Remark \ref{rempourletrucpourri} as well as Lemma E.2 in \cite{CG2015}, one can check that for all $\delta > 0$,
\[
\left\| \sum_{k=0}^{n-1}\beta_{n,k+1}\gamma_{k+1}\left( H^{-1} - \overline{S}_{k}^{-1} \right) H \left( \theta_{k} - \theta \right) \right\|^{2} = o \left( \frac{(\ln n)^{1+\delta}}{n^{ \alpha + \beta}} \right) \quad a.s,
\] 
and this term is so negligible. In the same way, 
\[
\left\| \sum_{k=0}^{n-1}\beta_{n,k+1}\gamma_{k+1}\overline{S}_{k}^{-1} \delta_{k} \right\|^{2} = o \left( \frac{(\ln n)^{2+ \delta}}{n^{ 2\alpha }} \right) \quad a.s.
\]
Note that the martingale term can be written as
\[
\sum_{k=0}^{n-1} \beta_{n,k+1}\gamma_{k+1}\overline{S}_{k}^{-1}\xi_{k+1}= \sum_{k=0}^{n-1} \beta_{n,k+1} c_{\alpha}\gamma_{k+1} \left(  \overline{S}_{k}^{-1} - H^{-1} \right) \xi_{k+1} + \sum_{k=0}^{n-1} \beta_{n,k+1}\gamma_{k+1} H^{-1}\xi_{k+1} . 
\]
Applying Theorem \ref{theomartbeta} and Remark \ref{rempourletrucpourri},
\[
\left\| \sum_{k=0}^{n} \beta_{n+1,k+1} \gamma_{k+1} \left(  \overline{S}_{k}^{-1} - H^{-1} \right) \xi_{k+1} \right\|^{2} = O \left( \frac{\ln n}{n^{\alpha + 2\beta}} \right) \quad a.s.
\]
Finally, let us now prove that the martingale term verifies assumption in \cite{Jak88}, i.e that we have
\begin{equation}
\label{cond1} \forall \nu > 0 , \quad \lim_{n\to + \infty} \mathbb{P}\left[ \sup_{0\leq k \leq n} \sqrt{\frac{n^{\alpha}}{c_{\alpha}}} \left\| \beta_{n+1,k+1} c_{\alpha}(k+1)^{-\alpha}H^{-1} \xi_{k+1} \right\| > \nu \right] = 0 .
\end{equation}
\begin{equation}
\label{cond2} \frac{n^{\alpha}}{c_{\alpha}} \sum_{k=0}^{n} \beta_{n+1,k+1}^{2} \gamma_{k+1}^{2} H^{-1} \xi_{k+1} \xi_{k+1}^{T} H^{-1} \xrightarrow[n\to + \infty]{a.s} \frac{1}{2}\sigma^{2} H^{-1} .
\end{equation}
\medskip

\noindent\textbf{Proof of equation (\ref{cond1}):}  Thanks to assumption \textbf{(H5)}, one can check that for all $n\geq 1$,
\begin{equation}
\label{majxipourri}\mathbb{E}\left[ \left\| \xi_{n+1} \right\|^{2+2\eta} |\mathcal{F}_{k} \right] \leq 2^{2+2\eta}C_{\eta}
\end{equation}
Then, applying Markov's inequality,
\begin{align*}
\mathbb{P}\left[ \sup_{0\leq k \leq n} \sqrt{\frac{n^{\alpha}}{c_{\alpha}}} \left\| \beta_{n+1,k+1} \gamma_{k+1}H^{-1} \xi_{k+1} \right\| > \nu \right] & \leq \sum_{k=0}^{n} \mathbb{P}\left[  \sqrt{\frac{n^{\alpha}}{c_{\alpha}}} \left\| \beta_{n+1,k+1} \gamma_{k+1}H^{-1} \xi_{k+1} \right\| > \nu \right] \\
& \leq \sum_{k=0}^{n} \mathbb{P}\left[ \left\| \xi_{k+1} \right\| > \frac{\sqrt{c_{\alpha}}\nu}{n^{\alpha/2}\left| \beta_{n+1,k+1}\right|\gamma_{k+1}\left\| H^{-1}\right\|_{op}} \right] \\
& \leq \frac{c_{\alpha}^{1+\eta}\left\| H^{-1} \right\|_{op}^{2+2\eta}}{\nu^{2+2\eta}} 2^{2+2\eta}C_{\eta}n^{\alpha\left( 1+ \eta \right)}\sum_{k=0}^{n}\left| \beta_{n+1,k+1} \right|^{2+2\eta} k^{-2\alpha\left( 1+ \eta \right)}.
\end{align*}
Moreover, note that there is a rank $n_{\alpha}$ such that for all $j \geq n_{\alpha}$ we have $c_{\alpha} j^{-\alpha} \leq 1$. For the sake of readibility of the proof (in other way, one can split the sum into two parts as in the proof of Lemma 3.1 in \cite{CCG2015}), we consider from now on that $n_{\alpha} =1$. Then
\begin{align*}
\left| \beta_{n+1,k+1} \right| & =  \prod_{i=k+2}^{n+1} \left( 1-c_{\alpha}i^{-\alpha} \right)  \leq  \exp \left( - c_{\alpha}\sum_{i=k+2}^{n+1} i^{-\alpha} \right).
\end{align*}
Applying Lemma E.2 in \cite{CG2015}, it comes
\[
 \sum_{k=0}^{n}\left| \beta_{n+1,k+1} \right|^{2+2\eta} k^{-2\alpha\left( 1+ \eta \right)} =\mathcal{O} \left( n^{-\alpha -2 \alpha \eta} \right) ,
\]
and so
\[
\lim_{n\to + \infty} \frac{c_{\alpha}^{ 1+ \eta}\left\| H^{-1} \right\|_{op}^{2+2\eta}}{\nu^{2 +2 \eta}} 2^{2+2\eta}C_{\eta}n^{\alpha (1+\eta)}\sum_{k=0}^{n}\left| \beta_{n+1,k+1} \right| k^{-2\alpha( 1+ \eta)} = 0 .
\]

\medskip

\noindent\textbf{Proof of equation (\ref{cond2}): } First, note that
\begin{align*}
\sum_{k=0}^{n} \beta_{n+1,k+1}^{2} \gamma_{k+1}^{2} H^{-1} \xi_{k+1}\xi_{k+1}^{T}H^{-1}& = \sum_{k=0}^{n} \beta_{n+1,k+1}^{2} \gamma_{k+1}^{2} H^{-1} \mathbb{E}\left[ \xi_{k+1}\xi_{k+1}^{T}|\mathcal{F}_{k} \right] H^{-1}\\ &+\sum_{k=0}^{n} \beta_{n+1,k+1}^{2} \gamma_{k+1}^{2} H^{-1} \Xi_{k+1}H^{-1}, 
\end{align*}
with $\Xi_{k+1} = \xi_{k+1}\xi_{k+1}^{T} - \mathbb{E}\left[ \xi_{k+1}\xi_{k+1}^{T} |\mathcal{F}_{k} \right]$. $\left( \Xi_{k} \right)$ is a sequence of martingale differences adapted to the filtration $\left( \mathcal{F}_{k} \right)$. Applying Theorem~\ref{theomartbeta}, it comes
\[
\left\| \sum_{k=0}^{n}\beta_{n+1,k+1}^{2}\gamma_{k+1}^{2}H^{-1} \Xi_{k+1} H^{-1} \right\|^{2} = O \left( \frac{\ln n}{n^{3\alpha}}\right) \quad a.s.
\]
Moreover, note that
\begin{align*}
\sum_{k=0}^{n} & \beta_{n+1,k+1}^{2} \gamma_{k+1}^{2} H^{-1} \mathbb{E}\left[ \xi_{k+1}\xi_{k+1}^{T}|\mathcal{F}_{k} \right] H^{-1} =  -\sum_{k=0}^{n} \beta_{n+1,k+1}^{2} \gamma_{k+1}^{2} H^{-1} \nabla G\left(\theta_{k} \right) \nabla G\left( \theta_{k} \right)^{T} H^{-1} \\
& + \sum_{k=0}^{n} \beta_{n+1,k+1}^{2} \gamma_{k+1}^{2} H^{-1} \mathbb{E}\left[ \nabla_{h} g \left( X_{k+1},Y_{k+1} , \theta_{k} \right) \nabla_{h} g \left( X_{k+1} , Y_{k+1} , \theta_{k}\right)^{T}|\mathcal{F}_{k} \right] H^{-1}.
\end{align*}
Applying Theorem \ref{ratetheta}, equality (\ref{vitexp}) and Lemma E.2 in \cite{CG2015}, since the gradient of $G$ is Lipschitz, one can check that for all $\delta > 0$,
\[
\left\| \sum_{k=0}^{n} \beta_{n+1,k+1}^{2} \gamma_{k+1}^{2} H^{-1} \nabla G\left(\theta_{k} \right) \nabla G\left( \theta_{k} \right)^{T} H^{-1} \right\|_{F}^{2} = o \left( \frac{(\ln n)^{1+\delta}}{n^{2\alpha}} \right) \quad a.s.
\]
Moreover, noting 
\[
R_{k}=\mathbb{E}\left[ \nabla_{h} g \left( X_{k+1},Y_{k+1} , \theta_{k} \right) \nabla_{h} g \left( X_{k+1} , Y_{k+1} , \theta_{k}\right)^{T}|\mathcal{F}_{k} \right] - \mathbb{E}\left[ \nabla_{h} g \left( X_{k+1},Y_{k+1} , \theta \right)\nabla_{h} g \left( X_{k+1},Y_{k+1} , \theta \right)^{T} |\mathcal{F}_{k} \right] ,
\]
\begin{align*}
& \sum_{k=0}^{n}  \beta_{n+1,k+1}^{2} \gamma_{k+1}^{2} H^{-1} \mathbb{E}\left[ \nabla_{h} g \left( X_{k+1},Y_{k+1} , \theta_{k} \right) \nabla_{h} g \left( X_{k+1} , Y_{k+1} , \theta_{k}\right)^{T}|\mathcal{F}_{k} \right] H^{-1} \\
&  = \sum_{k=0}^{n} \beta_{n+1,k+1}^{2} \gamma_{k+1}^{2} H^{-1} R_{k} H^{-1} + \sum_{k=1}^{n}  \beta_{n+1,k+1}^{2} \gamma_{k+1}^{2} H^{-1} \mathbb{E}\left[ \nabla_{h} g \left( X_{k+1},Y_{k+1} , \theta \right) \nabla_{h} g \left( X_{k+1} , Y_{k+1} , \theta\right)^{T}|\mathcal{F}_{k} \right] H^{-1}
\end{align*}
Moreover, applying Theorem \ref{ratetheta}, Lemma E.2 in \cite{CG2015}, one chan check that for all $\delta > 0$,
\[
\left\| \sum_{k=0}^{n} \beta_{n+1,k+1}^{2} \gamma_{k+1}^{2} H^{-1} R_{k} H^{-1} \right\|_{F} = o \left( \frac{(\ln n)^{1+\delta}}{n^{2\alpha}} \right) \quad a.s.
\]
Furthermore, since $\mathbb{E}\left[ \nabla_{h} g \left( X_{k+1},Y_{k+1} , \theta \right) \nabla_{h} g \left( X_{k+1} , Y_{k+1} , \theta\right)^{T}|\mathcal{F}_{k} \right] = \sigma^{2}H$, applying Lemma A.1 in \cite{GB2017}, 
\[
\lim_{n\to \infty}\left\| H^{-1} \left( \frac{n^{\alpha}}{c_{\alpha}}\sum_{k=0}^{n}  \beta_{n+1,k+1}^{2} \gamma_{k+1}^{2}  \mathbb{E}\left[ \nabla_{h} g \left( X_{k+1},Y_{k+1} , \theta \right) \nabla_{h} g \left( X_{k+1} , Y_{k+1} , \theta\right)^{T}|\mathcal{F}_{k} \right] - \frac{1}{2}\sigma^{2}H \right) H^{-1} \right\|_{F} =0,
\]
which concludes the proof.
\end{proof}

\subsection{Proof of Theorem \ref{ratethetabar}}
Let us first give some decompositions of the averaged estimates. First, note that
\[
\overline{\theta}_{n} = \frac{1}{n+1}\sum_{k=0}^{n} \theta_{k}.
\]
Second, one can write decomposition (\ref{decareutiliser}) as
\begin{equation}
\label{equationcle} \theta_{n} - \theta = H^{-1} \overline{S}_{n}\frac{\left( \theta_{n} - \theta \right) - \left( \theta_{n+1} - \theta \right)}{\gamma_{n+1}} + H^{-1}\xi_{n+1} - H^{-1} \delta_{n} .
\end{equation}
Then, summing these equalities and dividing by $n+1$, it comes
\begin{equation}
\label{decmoy} \overline{\theta}_{n} - \theta = H^{-1}\frac{1}{n+1}\sum_{k=0}^{n} \overline{S}_{k}\frac{\left( \theta_{k} - \theta \right) - \left( \theta_{k+1} - \theta \right)}{\gamma_{k+1}} + H^{-1} \frac{1}{n+1} \sum_{k=0}^{n} \xi_{k+1} - H^{-1} \frac{1}{n+1}\sum_{k=0}^{n} \delta_{k} .
\end{equation}

\begin{proof}[Proof of Theorem \ref{ratethetabar}]
Let us give the rate of convergence of each term on the right-hand side of equality (\ref{decmoy}). First, in order to apply a Law of Large Numbers and a Central Limit Theorem for martingales, let us calculate
\[
\lim_{n\to \infty}\frac{1}{n+1}\sum_{k=0}^{n} \mathbb{E}\left[ \xi_{k+1}\xi_{k+1}^{T} |\mathcal{F}_{k} \right].
\]
By definition of $\left( \xi_{n} \right)$, it comes 
\begin{align*}
\frac{1}{n+1}\sum_{k=0}^{n} \mathbb{E}\left[ \xi_{k+1}\xi_{k+1}^{T} |\mathcal{F}_{k} \right]&  = \frac{1}{n+1}\sum_{k=0}^{n} \mathbb{E}\left[ \nabla_{h} g \left( X_{k+1} ,Y_{k+1} , \theta_{k} \right)\nabla_{h} g \left( X_{k+1} ,Y_{k+1} , \theta_{k} \right)^{T} |\mathcal{F}_{k} \right] \\
&  - \frac{1}{n+1}\sum_{k=0}^{n} \nabla G\left( \theta_{k} \right) \nabla G \left( \theta_{k} \right)^{T} .  
\end{align*}
By continuity and since $\theta_{k}$ converges amost surely to $\theta$, Toeplitz lemma implies
\[
\frac{1}{n+1}\sum_{k=0}^{n} \mathbb{E}\left[ \xi_{k+1}\xi_{k+1}^{T} |\mathcal{F}_{k} \right] \xrightarrow[n\to + \infty]{a.s} \mathbb{E}\left[ \nabla_{h} g \left( X,Y , \theta \right)\nabla_{h} g \left( X,Y , \theta \right)^{T} \right] . 
\] 
Furthermore,
\[
\mathbb{E}\left[ \nabla_{h} g \left( X,Y , \theta \right)\nabla_{h} g \left( X,Y , \theta \right)^{T} \right] = \mathbb{E}\left[ \epsilon^{2} \nabla_{h}f\left( X,Y,\theta \right) \nabla_{h}f \left( X,Y , \theta \right)^{T} \right] = \sigma^{2} H .
\]
Finally, since $\mathbb{E}\left[ \left\| \xi_{n+1} \right\|^{2+2\eta} |\mathcal{F}_{n} \right] \leq 2^{2+2\eta}C_{\eta}$,  applying a Law of Large Numbers for martingales (see \cite{Duf97}),
\[
\frac{1}{(n+1)^{2}} \left\| H^{-1} \sum_{k=0}^{n} \xi_{k+1} \right\|^{2} = \mathcal{O}\left( \frac{\ln n}{n} \right) \quad a.s.
\]
Moreover, applying a Central Limit Theorem for martingales (see \cite{Duf97}),
\[
\frac{1}{\sqrt{n}}H^{-1} \sum_{k=0}^{n} \xi_{k+1} \xrightarrow[n\to + \infty]{\mathcal{L}} \mathcal{N}\left( 0 , \sigma^{2}H^{-1} \right) .
\]
Let us now prove that other terms on the right-hand side of equality (\ref{decmoy}) are negligible. Thanks to assumption \textbf{(H4b)} and since $\theta_{n}$ converges almost surely to $\theta$, 
\[
\left\| \delta_{n} \right\| \leq \left\| \theta_{n} - \theta \right\| \int_{0}^{1} \left\| \nabla^{2}G \left( \theta + t \left( \theta_{n} - \theta \right) \right) - H \right\|_{op} dt = O \left( \left\| \theta_{n} - \theta \right\|^{2} \right) \quad a.s
\] 
Then applying Theorem \ref{ratetheta}, for all $\delta > 0$,
\[
\frac{1}{n+1}\left\| H^{-1}\sum_{k=0}^{n} \delta_{k} \right\| \leq \frac{\left\| H^{-1} \right\|_{op}}{n+1} \sum_{k=1}^{n} \left\| \delta_{k} \right\| = o \left( \frac{(\ln n)^{1+\delta}}{n^{\alpha}} \right) \quad a.s 
\]
and since $\alpha > 1/2$, this term is negligible. Finally, as in \cite{Pel00}, using an Abel's transform,
\begin{align}
\notag \frac{1}{n+1}\sum_{k=0}^{n} \overline{S}_{k}\frac{\left( \theta_{k} - \theta \right) - \left( \theta_{k+1} - \theta \right)}{c_{\alpha}(k+1)^{-\alpha}} & = - \frac{\overline{S}_{n+1}\left( \theta_{n+1} - \theta\right) }{(n+1)\gamma_{n+2}} + \frac{\overline{S}_{0}\left( \theta_{0}-\theta \right)}{(n+1)\gamma_{1}} \\
\label{termedemerde}&  - \frac{1}{n+1}\sum_{k=1}^{n+1} \left( \gamma_{k}^{-1}\overline{S}_{k-1} - \gamma_{k+1}^{-1}\overline{S}_{k}  \right) \left( \theta_{k} - \theta \right) .
\end{align}
Let us now give the rates of convergence of each term on the right-hand side of equality (\ref{termedemerde}). First, applying Theorem \ref{ratetheta} and since $\overline{S}_{n}$ converges almost surely to $H$, 
\[
\left\| \frac{\overline{S}_{n+1}\left( \theta_{n+1} - \theta\right) }{\gamma_{n+2}(n+1)} \right\|^{2} = O \left( \frac{\ln n}{n^{2 - \alpha}} \right) \quad a.s \quad \quad \quad \text{and} \quad \quad \quad \left\| \frac{\overline{S}_{0}\left( \theta_{0}-\theta \right)}{(n+1)\gamma_{1}} \right\|^{2} = O \left( \frac{1}{n^{2}} \right) \quad a.s
\]
and these terms are negligible since $\alpha < 1$. Furthermore, since
\[
\overline{S}_{k}  =\overline{S}_{k-1} - \frac{1}{k}\overline{S}_{k-1} + \frac{1}{k} \left( \overline{\Phi}_{k} \overline{\Phi}_ {k}^{T} + \frac{c_{\beta}}{k^{\beta}} Z_{k}Z_{k}^{T} \right),
\]
it comes
\begin{align*}
 (*) :& =\frac{1}{n+1}\sum_{k=1}^{n+1} \left( \gamma_{k}\overline{S}_{k-1} - \gamma_{k+1}\overline{S}_{k}  \right) \left( \theta_{k} - \theta \right) \\
& =  \frac{1}{n+1}\sum_{k=1}^{n+1} \left( \gamma_{k}^{-1} - \gamma_{k+1}^{-1} \right) \overline{S}_{k-1} \left( \theta_{k} - \theta \right) - \frac{1}{n+1}\sum_{k=1}^{n+1}\frac{\gamma_{k+1}^{-1}}{k}\overline{S}_{k-1}\left( \theta_{k} - \theta \right)  \\
&  + \underbrace{\frac{1}{n+1}\sum_{k=1}^{n+1}\frac{\gamma_{k+1}^{-1}}{k } \left(   \overline{\Phi}_{k} \overline{\Phi}_{k}^{T} + \frac{c_{\beta}}{k^{\beta}} Z_{k}Z_{k}^{T} \right) \left( \theta_{k} - \theta \right)}_{:=(**)} .
\end{align*}
For the first term on the right-hand side of previous equality, since $\left| \gamma_{k}^{-1} - \gamma_{k+1}^{-1} \right| \leq  \alpha c_{\alpha}^{-1} k^{\alpha -1}$ and since $\overline{S}_{k-1}$ converges almost surely to $H$, applying Theorem \ref{ratetheta}, for all $\delta > 0$,
\[
\frac{1}{(n+1)^{2}}\left\| \sum_{k=1}^{n+1} \left( \gamma_{k}^{-1} - \gamma_{k+1}^{-1} \right) \overline{S}_{k-1} \left( \theta_{k} - \theta \right) \right\|^{2} = o \left( \frac{(\ln n)^{1+\delta}}{n^{2-\alpha} }\right) \quad a.s
\]
which is negligible since $\alpha < 1$. Furthermore, since $\overline{S}_{k-1}$ converges almost surely to $H$, one can check that
\[
\frac{1}{(n+1)^{2}} \left\|  \sum_{k=1}^{n} \frac{\gamma_{k+1}^{-1}}{k} \overline{S}_{k-1}\left( \theta_{k} - \theta \right) \right\|^{2} = o \left( \frac{(\ln n)^{1+\delta}}{n^{2- \alpha}} \right) \quad a.s.
\]
Let us now give the rate of convergence of $(**)$. In this aim, let us consider $\delta > 0$ and introduce the events $\Omega_{k}= \left\lbrace \left\| \theta_{k}   - \theta \right\| \leq \frac{(\ln k)^{1/2 + \delta}}{k^{\alpha/2 }} \right\rbrace$. Since $\delta > 0$, and thanks to Theorem \ref{ratetheta}, $\frac{\left\| \theta_{n} - \theta \right\|^{2}n^{\alpha}}{(\ln n)^{1+\delta}}$ converges almost surely to $0$, so that $\mathbf{1}_{\Omega_{k}^{C}}$ converges almost surely to $0$. Furthermore,
\begin{align*}
(**) &= \underbrace{\frac{1}{n+1}\sum_{k=1}^{n+1}\frac{\gamma_{k+1}^{-1}}{k } \left(   \overline{\Phi}_{k} \overline{\Phi}_{k}^{T} + \frac{c_{\beta}}{k^{\beta}} Z_{k}Z_{k}^{T} \right) \left( \theta_{k} - \theta \right)\mathbf{1}_{\Omega_{k}}}_{:= (***)} \\&+ \frac{1}{n+1}\sum_{k=1}^{n+1}\frac{\gamma_{k+1}^{-1}}{k } \left(   \overline{\Phi}_{k} \overline{\Phi}_{k}^{T} + \frac{c_{\beta}}{k^{\beta}} Z_{k}Z_{k}^{T} \right) \left( \theta_{k} - \theta \right) \mathbf{1}_{\Omega_{k}^{C}}
\end{align*}
Since $\mathbf{1}_{\Omega_{k}^{C}}$ converges almost surely to $0$,
\[
\sum_{k\geq 1}\frac{\gamma_{k+1}^{-1}}{k } \left\|   \overline{\Phi}_{k} \overline{\Phi}_{k}^{T} + \frac{c_{\beta}}{k^{\beta}} Z_{k}Z_{k}^{T} \right\| \left\| \theta_{k} - \theta \right\|_{op} \mathbf{1}_{\Omega_{k}^{C}} < + \infty \quad a.s
\]
so that
\[
\frac{1}{n+1}\left\| \sum_{k=1}^{n+1}\frac{\gamma_{k+1}^{-1}}{k } \left(   \overline{\Phi}_{k} \overline{\Phi}_{k}^{T} + \frac{c_{\beta}}{k^{\beta}} Z_{k}Z_{k}^{T} \right) \left( \theta_{k} - \theta \right) \mathbf{1}_{\Omega_{k}^{C}} \right\| = O \left( \frac{1}{n}\right) \quad a.s.
\]
Moreover,
\begin{align*}
(***) \leq \frac{1}{n+1}\sum_{k=1}^{n+1} \frac{\gamma_{k+1}^{-1}}{k}\left\| \overline{\Phi}_{k} \overline{\Phi}_{k}^{T} + \frac{c_{\beta}}{k^{\beta}}Z_{k}Z_{k}^{T} \right\|_{op} \frac{(\ln k)^{1/2 + \delta}}{k^{\alpha /2}} .
\end{align*}
One can consider the sequence of martigales differences $\left( \Xi_{k} \right)$ adapted to the filtration $\left( \mathcal{F}_{k} \right)$ and defined for all $k \geq 1$ by
\[
\Xi_{k} = \left\| \overline{\Phi}_{k} \overline{\Phi}_{k}^{T} + \frac{c_{\beta}}{k^{\beta}}Z_{k}Z_{k}^{T} \right\|_{op} - \mathbb{E}\left[ \left\| \overline{\Phi}_{k} \overline{\Phi}_{k}^{T} + \frac{c_{\beta}}{k^{\beta}}Z_{k}Z_{k}^{T} \right\|_{op}  |\mathcal{F}_{k-1} \right] .
\]
Then, 
\[
(***) \leq \frac{1}{n+1}\sum_{k=1}^{n+1} \frac{\gamma_{k+1}(\ln k)^{1/2 + \delta}}{k^{\alpha/2 +1}} \mathbb{E}\left[ \left\| \overline{\Phi}_{k} \overline{\Phi}_{k}^{T} + \frac{c_{\beta}}{k^{\beta}}Z_{k}Z_{k}^{T} \right\|_{op} |\mathcal{F}_{k-1} \right] + \frac{1}{n+1}\sum_{k=1}^{n+1} \frac{\gamma_{k+1}(\ln k)^{1/2 + \delta}}{k^{\alpha/2 +1}} \Xi_{k} .
\]
\end{proof}
Then, thanks to Assumption \textbf{(H1b)} and Toeplitz lemma, 
\[
\frac{1}{n+1}\sum_{k=1}^{n+1} \frac{\gamma_{k+1}(\ln k)^{1/2 + \delta}}{k^{\alpha/2 +1}} \mathbb{E}\left[ \left\| \overline{\Phi}_{k} \overline{\Phi}_{k}^{T} + \frac{c_{\beta}}{k^{\beta}}Z_{k}Z_{k}^{T} \right\|_{op} |\mathcal{F}_{k-1} \right] = o \left( \frac{(\ln n)^{1/2 + \delta}}{n^{1- \alpha/2}} \right) \quad a.s .
\]
Furthermore, since $\alpha < 1$, with assumption \textbf{(H1b)}, Theorem~\ref{theomartmoy} leads  to
\[
\left\| \frac{1}{n+1}\sum_{k=1}^{n+1} \frac{\gamma_{k+1}(\ln k)^{1/2 + \delta}}{k^{\alpha/2 +1}} \Xi_{k} \right\|^{2} = O \left( \frac{1}{n^{2}} \right) \quad a.s.
\]

\subsection{Proof of Corollary \ref{ratesn}}
\begin{proof}
The aim is to give the rate of convergence of each term of decomposition (\ref{decsn}). Note that the rate of the martingale term is given by (\ref{vitXi}), while, thanks to equation (\ref{vitzi}), we have
\[
\frac{1}{n^{2}}\left\| \sum_{k=1}^{n} \frac{c_{\beta}}{k^{\beta}} Z_{k+1}Z_{k+1}^{T} \right\|_{F}^{2} = O \left( \frac{1}{n^{2\beta}} \right) \quad a.s.
\]
Finally, since the functional $h \longmapsto \mathbb{E}\left[ \nabla_{h}f \left( X , h \right) \nabla_{h} f ( X,h )^{T} \right]$ is $C_{f}$-Lipschitz on a neighborhood of $\theta$ and since $\overline{\theta}_{n}$ converges almost surely to $\theta$, applying Theorem \ref{ratethetabar} as well as Toeplitz lemma, for all $\delta > 0$,
\begin{align*}
\frac{1}{n^{2}}\left\| \sum_{k=1}^{n} \left( \mathbb{E}\left[ \overline{\Phi}_{k+1}\overline{\Phi}_{k+1}^{T} |\mathcal{F}_{k} \right] - H \right) \right\|_{F}^{2} & = \mathcal{O} \left(  \frac{C_{f}^{2}}{n^{2}} \left( \sum_{k=1}^{n} \left\| \overline{\theta}_{k} - \theta \right\| \right)^{2} \right) \quad a.s  \\
& = o \left( \frac{(\ln n)^{1+\delta}}{n} \right) \quad a.s,
\end{align*}
which concludes the proof.

\begin{rmq}\label{rempourletrucpourri}
Note that to prove equality \eqref{tlc} without knowing the rate of convergence of $\overline{\theta}_{n}$, it is necessary to have a first rate of convergence of $\overline{S}_{n}$. For that purpose, we study the asymptotic behaviour of
\[
\frac{1}{n^{2}}\left\| \sum_{k=1}^{n} \left( \mathbb{E}\left[ \overline{\Phi}_{k+1}\overline{\Phi}_{k+1}^{T} |\mathcal{F}_{k} \right] - H \right) \right\|_{F}^{2}.
\]
Equality \eqref{ratethetaas} yields for all $\delta > 0$,
\[
\left\| \overline{\theta}_{n} - \theta \right\| \leq \frac{1}{n+1}\sum_{k=0}^{n} \left\| \theta_{k} - \theta \right\| = O \left( \frac{(\ln n)^{1 /2 + \delta /2}}{n^{\alpha /2}} \right) , 
\]
so that, since the functional $h \longmapsto \mathbb{E}\left[ \nabla_{h}f \left( X , h \right) \nabla_{h} f ( X,h )^{T} \right]$ is $C_{f}$-Lipschitz on a neighborhood of $\theta$,
\begin{align*}
\frac{1}{n^{2}}\left\| \sum_{k=1}^{n} \left( \mathbb{E}\left[ \overline{\Phi}_{k+1}\overline{\Phi}_{k+1}^{T} |\mathcal{F}_{k} \right] - H \right) \right\|_{F}^{2} & = \mathcal{O} \left(  \frac{C_{f}^{2}}{n^{2}} \left( \sum_{k=1}^{n} \left\| \overline{\theta}_{k} - \theta \right\| \right)^{2} \right) \quad a.s  \\
& = o \left( \frac{(\ln n)^{1+\delta}}{n^{\alpha}} \right) \quad a.s,
\end{align*}
and then, since $\beta < \alpha  -1/2$,
\[
\left\| \overline{S}_{n} - H \right\|_{F} = O  \left( \max \left\lbrace  \frac{c_{\beta}}{n^{\beta}} , \sqrt{\frac{(\ln n)^{1+\delta}}{n^{\alpha}}} \right\rbrace  \right) \quad a.s
\]
\end{rmq}
\end{proof}

\subsection{Proof of Theorem \ref{theothetatilde}\label{prooftilde}}
Only the main lines of the proof are given since it is a mix between the proof of Theorem~\ref{ratethetabar} and the ones in \cite{BGBP2019}.

\begin{proof}[Proof of THeorem \ref{theothetatilde}]
Let us denote $\overline{H}_{n}^{-1} = (n+1) \widetilde{H}_{n}^{-1}$. First, remark that as in the proof of Corollary \ref{ratesn}, one can check that that on $\widetilde{\Gamma}_{\theta}$,
\[
\overline{H}_{n} \xrightarrow[n\to + \infty]{a.s} H \quad \quad \text{and} \quad \quad \overline{H}_{n}^{-1} \xrightarrow[n\to + \infty]{a.s} H^{-1} .
\]
Furthermore, decomposition \eqref{decxi} can be rewritten as
\begin{equation}
\label{decxitilde} \widetilde{\theta}_{n+1} = \widetilde{\theta}_{n} - \theta - \frac{1}{n+1} \overline{S}_{n}^{-1} \nabla G \left( \widetilde{\theta}_{n} \right) + \frac{1}{n+1}\overline{H}_{n}^{-1} \widetilde{\xi}_{n+1},
\end{equation}
where $\widetilde{\xi}_{n+1} := \nabla G \left( \widetilde{\theta}_{n} \right) - \nabla_{h} g \left( X_{n+1} , Y_{n+1} , \widetilde{\theta}_{n} \right)$. Then, $\left( \widetilde{\xi}_{n} \right)$ is a sequence of martingale differences adapted to the filtration $\left( \mathcal{F}_{n} \right)$. Linearizing the gradient, decomposition \eqref{decdelta} can be rewritten as
\begin{align}
\notag \widetilde{\theta}_{n+1} - \theta & = \frac{n}{n+1} \left( \widetilde{\theta}_{n} - \theta \right) + \frac{1}{n+1}\left( H^{-1} - \overline{H}_{n}^{-1} \right)H \left( \widetilde{\theta}_{n} - \theta \right) \\
\label{decdeltatilde} & + \frac{1}{n+1}\overline{H}_{n}^{-1}\widetilde{\xi}_{n+1} - \frac{1}{n+1}\overline{H}_{n}^{-1} \widetilde{\delta}_{n} 
\end{align}
where $\widetilde{\delta}_{n} = \nabla G \left( \widetilde{\theta}_{n} \right) - H \left( \widetilde{\theta}_{n} - \theta \right)$ is the reminder term in the Taylor's decomposition of the gradient. Then, by induction, it comes
\begin{align}
\label{dectot} \widetilde{\theta}_{n} - \theta = \frac{1}{n}\left( \widetilde{\theta}_{0} - \theta \right)+ \frac{1}{n}\sum_{k=0}^{n-1} \left( H^{-1} - \overline{H}_{k}^{-1} \right) H \left( \widetilde{\theta}_{k} - \theta \right) - \frac{1}{n}\sum_{k=0}^{n-1} \overline{H}_{k}^{-1} \widetilde{\delta}_{k} + \frac{1}{n}\sum_{k=0}^{n-1}\overline{H}_{k}^{-1}\widetilde{\xi}_{k+1} .
\end{align}
Since $\overline{H}_{k}^{-1}$ converges almost surely to $H^{-1}$, on $\widetilde{\Gamma}_{\theta}$, one can check that 
\[
\frac{1}{n}\sum_{k=0}^{n-1} \overline{H}_{k}^{-1} \widetilde{\xi}_{k+1}\widetilde{\xi}_{k+1}^{T}\overline{H}_{k}^{-1} \xrightarrow[n\to + \infty]{a.s} \sigma^{2}H^{-1}.
\]
Thanks to assumption \textbf{(H5)}, applying a law of large numbers for martingales, one can check that
\begin{equation}\label{vitxitilde}
\left\|\frac{1}{n}\sum_{k=0}^{n-1} \overline{H}_{k}^{-1} \widetilde{\xi}_{k+1} \right\|^{2} = \mathcal{O} \left( \frac{\ln n}{n} \right) \quad a.s
\end{equation}
In the same way, applying  a central limit theorem, it comes
\begin{equation}\label{tlctilde}
\frac{1}{\sqrt{n}} \sum_{k=0}^{n-1} \overline{H}_{k}^{-1} \widetilde{\xi}_{k+1} \xrightarrow[n\to + \infty]{\mathcal{L}} \mathcal{N}\left( 0 , \sigma^{2}H^{-1} \right) .
\end{equation}
Let us now prove that the other terms in decomposition \eqref{dectot} are negligible. First, clearly 
\begin{equation}
\label{vitdebile}\frac{1}{n}\left\| \widetilde{\theta}_{0} - \theta \right\| = \mathcal{O} \left( \frac{1}{n} \right) \quad a.s.
\end{equation}
Furthermore, let us denote 
\[
\widetilde{\Delta}_{n} = \frac{1}{n}\sum_{k=0}^{n-1} \left( H^{-1} - \overline{H}_{k}^{-1} \right) H \left( \widetilde{\theta}_{k} - \theta \right) - \frac{1}{n}\sum_{k=0}^{n-1} \overline{H}_{k}^{-1} \widetilde{\delta}_{k} .
\]
Furthermore, as in the proof of Theorem \ref{ratetheta}, one can verify
\begin{align*}
& \left\| \left( H^{-1} - \overline{H}_{n}^{-1} \right) H \left( \widetilde{\theta}_{n} - \theta \right) \right\| = o \left( \left\| \widetilde{\theta}_{n} - \theta \right\| \right) \quad a.s
& \left\| \widetilde{\delta}_{n} \right\| = o \left( \left\| \widetilde{\theta}_{n} - \theta \right\| \right) \quad a.s
\end{align*}
Then,
\begin{equation}\label{deltatilde}
\left\| \widetilde{\Delta}_{n+1} \right\| = \frac{n}{n+1} \left\| \widetilde{\Delta}_{n} \right\| + o \left( \widetilde{\theta}_{n} - \theta \right) \quad a.s.
\end{equation}
As in the proof of Theorem 6.2 in \cite{BGBP2019} (see equations (6.24) to (6.32)), it comes
\begin{equation}
\label{vitdeltatilde}\left\| \widetilde{\Delta}_{n} \right\|^{2} = \mathcal{O} \left( \frac{\ln n}{n} \right) \quad a.s.
\end{equation}
Then, thanks to equalities \eqref{vitxitilde},\eqref{vitdebile} and \eqref{vitdeltatilde}, it comes
\begin{equation}\label{vitthetatilde}
\left\|\widetilde{\theta}_{n} - \theta \right\|^{2} = \mathcal{O} \left( \frac{\ln n}{n} \right) \quad a.s.
\end{equation}
In order to get the asymptotic normality of $\widetilde{\theta}_{n}$, let us now give the rate of convergence of each term on the right-hand side of decomposition \eqref{dectot}. First, since
\[
\widetilde{\delta}_{n} = \mathcal{O} \left( \left\| \widetilde{\theta}_{n} - \theta \right\|^{2} \right) \quad a.s,
\]
and since $\overline{H}_{n}^{-1}$ converges almost surely to $H^{-1}$, thanks to equality \eqref{vitthetatilde}, for all $\delta > 0$ 
\[
\frac{1}{n}\left\| \sum_{k=0}^{n-1} \overline{H}_{k}^{-1}\widetilde{\delta}_{k} \right\| = o \left( \frac{(\ln n)^{2+\delta}}{n} \right) \quad a.s
\]
which is so negligible. Furthermore, as in the proof of Corollary \ref{ratesn}, one can check that for all $\delta > 0$,
\[
\left\| \overline{H}_{n}^{-1} - H^{-1} \right\|^{2} = \mathcal{O} \left( \max \left\lbrace \frac{c_{\beta}^{2}}{n^{2\beta}} , \frac{(\ln n)^{1+\delta}}{n} \right\rbrace \right) \quad a.s .
\]
Then, for all $\delta > 0$,
\[
\frac{1}{n}\left\|\sum_{k=1}^{n} \left( \overline{H}_{k}^{-1} - H^{-1} \right) H \left( \widetilde{\theta}_{k} - \theta \right) \right\| = o \left( \max \left\lbrace \frac{c_{\beta}(\ln n)^{1/2+\delta}}{n^{\beta + 1/2}} , \frac{(\ln n)^{2+\delta}}{n} \right\rbrace \right) \quad a.s
\]
and this term is no negligible.
\end{proof} 

\subsection{Proof of Corollary \ref{estsigma}}
First, note that
\[
\left( \hat{Y}_{k} - Y_{k} \right)^{2} - \sigma^{2} = \left( f \left( X_{k} , \overline{\theta}_{k-1} \right) - f \left( X_{k} , \theta \right) \right)^{2} -2 \epsilon_{k} \left( f \left( X_{k} , \overline{\theta}_{k-1} \right) - f \left( X_{k} , \theta \right) \right) + \left( \epsilon_{k}^{2} - \sigma^{2} \right) .
\]
Thanks to a Taylor's decomposition, there are $U_{0},\ldots,U_{n-1} \in \mathbb{R}^{q}$ such that 
\[
R_{1} := \frac{1}{n}\sum_{k=1}^{n} \left( f \left( X_{k} , \overline{\theta}_{k-1} \right) - f \left( X_{k} , \theta \right) \right)^{2} \leq \frac{1}{n}\sum_{k=1}^{n} \left\| \nabla_{h} f \left( X_{k} , U_{k-1} \right) \right\|^{2} \left\| \overline{\theta}_{k-1} - \theta \right\|^{2}
\]
Let us consider the filtration $\left( \mathcal{F}^{(1)}_{n} \right)$ defined by $\mathcal{F}_{n}^{(1)} = \sigma \left( \left( X_{k} , \epsilon_{k-1}\right), k=1,\ldots ,n  \right)$. Then, considering $\widetilde{\xi}_{k} = \mathbb{E}\left[ \left\| \nabla_{h} f \left( X_{k} , U_{k-1} \right) \right\|^{2} |\mathcal{F}^{(1)}_{k-1} \right] - \left\| \nabla_{h} f \left( X_{k} , U_{k-1} \right) \right\|^{2}$, it comes thanks to assumption \textbf{(H1b)},
\[
R_{1} \leq \frac{1}{n}\sum_{k=1}^{n}\sqrt{C''} \left\| \overline{\theta}_{k-1} - \theta \right\|^{2} -  \frac{1}{n}\sum_{k=1}^{n} \widetilde{\xi}_{k} \left\| \overline{\theta}_{k-1} - \theta \right\|^{2}  
\]
Then, applying Toeplitz Lemma and Theorem \ref{ratethetabar} to the term on the left hand-side of previous inequality, one can check that for all $\delta > 0$,
\[
\frac{1}{n}\sum_{k=1}^{n}\sqrt{C''} \left\| \overline{\theta}_{k-1} - \theta \right\|^{2}  = o \left( \frac{(\ln n)^{2+ \delta}}{n}\right) \quad a.s.
\]
Furthermore, since $\left( \widetilde{\xi}_{n} \right)$ is a sequence of martingale differences with bounded squared moments, with the help of Theorems \ref{ratethetabar} and \ref{theomartmoy}, it comes
\[
\frac{1}{n}\sum_{k=1}^{n} \widetilde{\xi}_{k} \left\| \overline{\theta}_{k-1} - \theta \right\|^{2} = \mathcal{O} \left( \frac{1}{n} \right) \quad a.s
\]
so that
\[
R_{1} = o \left( \frac{(\ln n)^{2+\delta}}{n}\right) \quad a.s.
\] 
Furthermore, let us consider
\[
R_{2} = \frac{1}{n}\sum_{k=1}^{n}\epsilon_{k} \left( f \left( X_{k} , \overline{\theta}_{k-1} \right) - f \left( X_{k} , \theta \right) \right) .
\]
We have a sum of martingale differences, and thanks to Theorems \ref{ratethetabar} and \ref{theomartmoy}, one can check that
\[
\left\| R_{2} \right\|^{2} = o \left( \frac{(\ln n)^{2+\delta}}{n^{2}} \right) \quad a.s.
\]
Then, considering that
\[
\epsilon_{n}^{2} - \sigma^{2} = \epsilon_{n}^{2} - \mathbb{E}\left[ \epsilon_{n}^{2} |\mathcal{F}^{(1)}_{n-1} \right]
\]
one can apply a Central Limit Theorem for martingale to get the asymptotic normality, and a strong law of large numbers for martingales to get the almost sure rate of convergence.

\section{Useful results on martingales}\label{sectionmartingales}

\subsection{A useful theorem for stochastic algorithms with step sequence $(n^{-\alpha})_n$}

\begin{theo}\label{theomartbeta}
Let $H$ be a separable Hilbert space and let us consider
\[
M_{n+1} = \sum_{k=1}^{n} \beta_{n,k} \gamma_{k}  R_{k} \xi_{k+1},
\]
where
\begin{itemize}
\item $\left( \xi_{n} \right)$ is a $H$-valued martingale differences  sequence adapted  to a filtration $\left( \mathcal{F}_{n} \right)$ such that \begin{align}
\notag & \mathbb{E}\left[ \left\| \xi_{n+1} \right\|^{2} |\mathcal{F}_{n} \right] \leq C + R_{2,n} \quad a.s, \\
\label{hypmart} & \sum_{n\geq 1}\gamma_{n}\mathbb{E}\left[ \left\| \xi_{n+1} \right\|^{2}\ind{\left\| \xi_{n+1} \right\|^{2} \geq \gamma_{n}^{-1}(\ln n)^{-1}} |\mathcal{F}_{n} \right] < + \infty \quad a.s, 
\end{align}
where $C\geq 0$ and $(R_{2,n})_n$ converges almost surely to $0$;
\item $\gamma_{n}=cn^{-\alpha}$ with $c> 0$ and $\alpha \in (1/2,1)$;
\item $(R_{n})$ is a sequence of operators on $H$ such that, for a deterministic sequence $\left( v_{n} \right)$, 
\[
\left\| R_{n} \right\|_{op} = o \left( v_{n} \right) \quad a.s \quad \quad \text{and} \quad \quad v_{n} = \frac{(\ln n)^{a}}{n^{b}} .
\]
with $a,b \geq 0$;
\item For all $n \geq 1$ and $1 \leq k \leq n$,
\[
\beta_{n,k} = \prod_{j=k+1}^{n} \left( I_{H} - \gamma_{k} \Gamma \right)\quad \text{and} \quad \beta_{n,n} = I_{H},
\]
where $\Gamma $ is a symmetric operator on $H$ such that $0 < \lambda_{\min} (\Gamma ) \leq \lambda_{\max} (\Gamma) < + \infty $.
\end{itemize}
Then, 
\[
\left\| M_{n+1} \right\|^{2} = O \left(  \gamma_{n}v_{n}^{2}\ln n \right)  \quad a.s.
\]
\end{theo}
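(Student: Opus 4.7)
The plan is to derive a one-step recursion for $M_{n+1}$ and then combine Robbins--Siegmund-type martingale techniques with the truncation built into assumption~\eqref{hypmart}. From $\beta_{n+1,k} = (I_H - \gamma_{n+1}\Gamma)\beta_{n,k}$ for $k\leq n$ and $\beta_{n+1,n+1} = I_H$, one immediately gets
\[
M_{n+2} = (I_H - \gamma_{n+1}\Gamma)\,M_{n+1} + \gamma_{n+1} R_{n+1}\,\xi_{n+2}.
\]
Since $R_{n+1}$ and $M_{n+1}$ are $\mathcal{F}_{n+1}$-measurable while $\xi_{n+2}$ is a martingale difference at time $n+2$, expanding $\|M_{n+2}\|^2$ and conditioning on $\mathcal{F}_{n+1}$ gives, for $n$ large enough so that $\gamma_{n+1}\|\Gamma\|_{op}^2 \leq \lambda_{\min}(\Gamma)$,
\[
\mathbb{E}\!\left[\|M_{n+2}\|^2 \bigm| \mathcal{F}_{n+1}\right] \leq \bigl(1 - \lambda_{\min}(\Gamma)\gamma_{n+1}\bigr)\|M_{n+1}\|^2 + \gamma_{n+1}^2 \|R_{n+1}\|_{op}^2 (C + R_{2,n+1}),
\]
which is of Robbins--Siegmund form but still contains the random factor $\|R_n\|_{op}^2$.

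I would next handle the possible unboundedness of $\xi_{k+1}$ by truncating at the Lindeberg threshold coming from \eqref{hypmart}. Writing $\xi_{k+1} = \xi_{k+1}' + \xi_{k+1}''$ with $\xi_{k+1}'' = \xi_{k+1}\,\ind{\|\xi_{k+1}\|^2 > \gamma_k^{-1}(\ln k)^{-1}}$, and splitting $M_{n+1} = M_{n+1}' + M_{n+1}''$ accordingly, the tail contribution $M_{n+1}''$ is absolutely summable in $k$ thanks to \eqref{hypmart} together with $\|R_k\|_{op} = o(v_k)$ a.s., so that $\|M_{n+1}''\|^2 = o(\gamma_n v_n^2 \ln n)$ a.s. For the truncated part $M_{n+1}'$, the increments are pathwise controlled by $c\,\gamma_k^{1/2}(\ln k)^{-1/2}\|R_k\|_{op}\|\beta_{n,k}\|_{op}$, and using the exponential-decay estimate $\|\beta_{n,k}\|_{op}\leq \exp\bigl(-\lambda_{\min}(\Gamma)\sum_{j=k+1}^n \gamma_j\bigr)$ together with Lemma~E.2 of \cite{CG2015}, the conditional quadratic variation is at most of order $\gamma_n v_n^2$ a.s. A strong law of large numbers for martingales (see \cite{Duf97}) then upgrades this to $\|M_{n+1}'\|^2 = O(\gamma_n v_n^2 \ln n)$ a.s., which is the desired bound.

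The main obstacle is that $M_{n+1}$ is not a martingale in the variable $n$ (the weights $\beta_{n,k}$ depend on the terminal index), and that the a.s.\ control $\|R_k\|_{op} = o(v_k)$ cannot be directly converted into a moment estimate. I would circumvent both issues by working pathwise on the almost sure event where $\|R_k\|_{op} \leq \varepsilon v_k$ for all $k$ larger than some random rank $k_0 = k_0(\varepsilon, \omega)$; the finitely many early indices then contribute only an exponentially decaying boundary term through $\beta_{n,k}$, while the bulk of the sum is handled by the martingale argument above. Keeping the sharp factor $\ln n$ — rather than the weaker $(\ln n)^{1+\delta}$ that a naive combination of Markov's inequality and Borel--Cantelli would yield — is the most delicate point, and relies on using a martingale SLLN precisely matched to the Lindeberg truncation scale fixed in \eqref{hypmart}.
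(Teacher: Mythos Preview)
Your overall architecture --- set up the one-step recursion, split via the truncation threshold $\delta_n = \gamma_n^{-1/2}(\ln n)^{-1/2}$, and treat the tail part through assumption~\eqref{hypmart} --- matches the paper. But the heart of the argument, the bound on the truncated piece $M_{n+1}'$, has a real gap.

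You write that ``a strong law of large numbers for martingales (see \cite{Duf97}) then upgrades this to $\|M_{n+1}'\|^2 = O(\gamma_n v_n^2 \ln n)$ a.s.'' This does not go through as stated: $M_{n+1}' = \sum_{k=1}^{n}\beta_{n,k}\gamma_k R_k\xi_{k+1}'$ is a \emph{triangular array}, since the weights $\beta_{n,k}$ depend on the terminal index $n$. The martingale SLLNs in \cite{Duf97} apply to a single martingale $n\mapsto \sum_{k\le n}a_k\xi_{k+1}$ with weights fixed in $n$, and yield rates in terms of the bracket $\langle M\rangle_n$; they do not give a uniform-in-$n$ statement for an array whose whole weight profile shifts with $n$. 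You yourself flag this as ``the main obstacle,'' but the pathwise-freezing device you propose only handles the randomness of $R_k$, not the triangular structure.

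The paper resolves this precisely by \emph{not} invoking an SLLN. For each fixed $n$, it runs a Pinelis-type exponential supermartingale argument (with $\cosh$) on the finite martingale $j\mapsto \sum_{k\le j}\beta_{n,k}\gamma_k\Xi_{k+1}$, using that the truncated increments are bounded by $2\delta_k v_k\|\beta_{n,k}\|_{op}\gamma_k$. With the choice $\lambda = \gamma_n^{-1/2}v_n^{-1}\sqrt{\ln n}$ this yields a sub-Gaussian tail $\mathbb{P}[\|M_{2,n+1}\|\ge r]\le 2\exp(-\lambda r + C'''\ln n)$, and taking $r\sim v_n\sqrt{\gamma_n\ln n}$ gives a bound of order $n^{-2}$, so Borel--Cantelli closes. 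Your remark that ``a naive combination of Markov's inequality and Borel--Cantelli would yield $(\ln n)^{1+\delta}$'' is true for \emph{second-moment} Markov, but the exponential inequality is exactly what recovers the sharp $\ln n$. So the route you dismiss is in fact the one that works; the SLLN route you favor does not apply.

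A smaller point: for the tail piece $M_{n+1}''$ you claim it is ``absolutely summable in $k$'' from \eqref{hypmart}. That assumption only controls $\sum_n\gamma_n\mathbb{E}[\|\xi_{n+1}\|^2\ind{\cdot}\mid\mathcal{F}_n]$, not $\sum_n\|\xi_{n+1}''\|$ itself. The paper instead feeds the one-step recursion for $\|M_{3,n+1}\|^2$ into Robbins--Siegmund with remainder $\gamma_n v_n^2\cdot\gamma_n\mathbb{E}[\|\xi_{n+1}''\|^2\mid\mathcal{F}_n]$, which is summable after normalizing by $\gamma_n v_n^2$. Your sketch should do the same rather than assert absolute summability.
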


\begin{rmq}
Note equation \eqref{hypmart} holds since there are $\eta > \frac{1}{\alpha} -1$ and a positive constant $C_{2}$ such that
\[
\mathbb{E}\left[ \left\| \xi_{n+1} \right\|^{2+2\eta} |\mathcal{F}_{n} \right] \leq C_{2} + R_{\eta,n}
\]
with $R_{\eta,n}$ converging to $0$.
\end{rmq}

\begin{rmq}
Previous theorem remains true considering a sequence $(R_{n})$ satisfying that there are a positive constant $C_{R}$ and a rank $n_{R}$ such that for all $n \geq n_{R}$, $\left\| R_{n} \right\|_{op} \leq C_{R}v_{n}$.
\end{rmq}

\begin{proof}
Let us now consider the events
\begin{align*}
& A_{n} = \left\lbrace R_{n} > v_{n} \quad \text{or} \quad R_{2,n}> C  \right\rbrace \\
& B_{n+1} =  \left\lbrace R_{n} \leq v_{n}, R_{2,n} \leq C, \left\| \xi_{n+1} \right\|\leq \delta_{n}  \right\rbrace \\
& C_{n+1} = \left\lbrace R_{n} \leq v_{n}, R_{2,n} \leq C, \left\| \xi_{n+1} \right\| > \delta_{n}  \right\rbrace
\end{align*}
with $\delta_{n} = \gamma_{n}^{-1/2}(\ln n)^{-1/2}$. One can remark that $A_{n}^{c} = B_{n+1} \sqcup C_{n+1}$. Then, one can write $M_{n+1}$ as
\begin{align*}
M_{n+1} & = \sum_{k=1}^{n} \beta_{n,k} \gamma_{k}  R_{k} \xi_{k+1}\ind{A_{k}} + \sum_{k=1}^{n} \beta_{n,k} \gamma_{k}  R_{k} \xi_{k+1}\ind{A_{k}^{c}} \\
& = \sum_{k=1}^{n} \beta_{n,k} \gamma_{k}  R_{k} \xi_{k+1}\ind{A_{k}} + \sum_{k=1}^{n} \beta_{n,k} \gamma_{k}  R_{k} \left( \xi_{k+1}\ind{B_{k+1}} - \mathbb{E}\left[ \xi_{k+1} \ind{B_{k+1}} |\mathcal{F}_{k} \right] \right) \\
& + \sum_{k=1}^{n} \beta_{n,k} \gamma_{k}  R_{k} \left( \xi_{k+1}\ind{C_{k+1}} - \mathbb{E}\left[ \xi_{k+1} \ind{C_{k+1}} |\mathcal{F}_{k} \right] \right).
\end{align*}
Let us now give the rates of convergence of these three terms.

\bigskip

\noindent\textbf{Bounding $ M_{1,n+1} := \sum_{k=1}^{n} \beta_{n,k} \gamma_{k}  R_{k} \xi_{k+1}\ind{A_{k}} $. }
Remark that there is a rank $n_{0}$ such that for all $n \geq n_{0}$, $\left\| I_{H} - \gamma_{n} \Gamma \right\|_{op} \leq \left( 1- \lambda_{\min} \gamma_{n} \right) $. Furthermore, $M_{1,n+1} = \left( I_{H} - \gamma_{n} \Gamma \right) M_{1,n} + \gamma_{n}  R_{n} \xi_{n+1} \ind{A_{n}}$. Then, for all $n \geq n_{0}$,
\[
\mathbb{E}\left[ \left\| M_{1,n+1} \right\|^{2} |\mathcal{F}_{n} \right] \leq \left( 1- \lambda_{\min}\gamma_{n} \right)^{2} \left\| M_{1,n} \right\|^{2} + \gamma_{n}^{2} \left\| R_{n} \right\|_{op}^{2} \left( C + R_{2,n} \right) \ind{A_{n}} .
\]
Considering $V_{n+1} = \prod_{k=1}^{n} \left( 1 +  \lambda_{\min} \gamma_{k} \right)^{2} \left\| M_{1,n+1} \right\|^{2}$, it comes
\[
\mathbb{E}\left[ V_{n+1} |\mathcal{F}_{n} \right] \leq \left( 1 - \lambda_{\min}^{2} \gamma_{n}^{2} \right)^{2} V_{n} + \prod_{k=1}^{n} \left( 1 +  \lambda_{\min} \gamma_{k} \right)^{2}\gamma_{n}^{2}  \left\| R_{n} \right\|_{op}^{2} \left( C + R_{2,n} \right) \ind{A_{n}} 
\] 
Moreover, $\ind{A_{n}}$ converges almost surely to $0$ so that
\[
\sum_{n\geq 1} \prod_{k=1}^{n} \left( 1 +  \lambda_{\min} \gamma_{k} \right)^{2}\gamma_{n}^{2}  \left\| R_{n} \right\|_{op}^{2} \left( C + R_{2,n} \right) \ind{A_{n}} < + \infty \quad a.s
\]
and applying Robbins-Siegmund Theorem, $V_{n}$ converges almost surely to a finite random variable, i.e
\[
\left\| M_{1,n+1} \right\|^{2} = \mathcal{O} \left( \prod_{k=1}^{n} \left( 1 +  \lambda_{\min} \gamma_{k} \right)^{-2}  \right) \quad a.s
\]
and converges exponentially fast.

\bigskip

\noindent\textbf{Bounding $M_{2,n+1} := \sum_{k=1}^{n} \beta_{n,k} \gamma_{k}  R_{k} \left( \xi_{k+1}\ind{B_{k+1}} - \mathbb{E}\left[ \xi_{k+1} \ind{B_{k+1}} |\mathcal{F}_{k} \right] \right)$. }
Let us denote $\Xi_{k+1} =   R_{k} \left( \xi_{k+1}\ind{B_{k+1}} - \mathbb{E}\left[ \xi_{k+1} \ind{B_{k+1}} |\mathcal{F}_{k} \right] \right)$. Remark that $\left( \Xi_{n} \right)$ is a sequence of martingale differences adapted to the filtration $\left( \mathcal{F}_{n} \right)$. As in \cite{Pinelis} (proofs of Theorems~3.1 and 3.2), let $\lambda > 0$ and consider for all $t \in [ 0, 1]$ and $j \leq n$,
\[
\varphi(t) = \mathbb{E}\left[ \cosh \left( \lambda \left\| \sum_{k=1}^{j-1} \beta_{n,k}\gamma_{k} \Xi_{k+1} + t \beta_{n,j} \gamma_{j}\Xi_{j+1} \right\| \right) \left|\mathcal{F}_{j}\right. \right].
\]
One can check that $\varphi '(0) = 0$ and (see Pinellis for more details)
\[
\varphi '' (t) \leq \lambda^{2} \mathbb{E}\left[ \left\| \beta_{n,j} \gamma_{j}\Xi_{j+1} \right\|^{2}e^{ \lambda t \left\| \beta_{n,j} \gamma_{j}\Xi_{j+1} \right\|} \cosh \left( \lambda \left\| \sum_{k=1}^{j-1} \beta_{n,k}\gamma_{k} \Xi_{k+1} \right\| \right) |\mathcal{F}_{j} \right]
\]
Then,
\begin{align*}
\mathbb{E}\left[ \cosh \left( \lambda \left\| \sum_{k=1}^{j} \beta_{n,k}\gamma_{k} \Xi_{k+1} \right\| \right) | \mathcal{F}_{j} \right]	  = \varphi (1) & = \varphi (0) + \int_{0}^{1} (1-t) \varphi ''(t) dt \\
& \leq \left( 1+ e_{j,n } \right)\cosh \left( \lambda \left\| \sum_{k=1}^{j-1} \beta_{n,k}\gamma_{k} \Xi_{k+1} \right\| \right) 
\end{align*}
with $e_{j,n} = \mathbb{E}\left[ e^{\lambda \left\| \beta_{n,j} \gamma_{j}\Xi_{j+1} \right\|} -1 - \lambda\left\| \beta_{n,j} \gamma_{j}\Xi_{j+1} \right\| |\mathcal{F}_{j} \right]$, which is well defined since $\Xi_{j+1}$ is a.s. finite. Moreover, considering
\[
G_{n+1} = \frac{\cosh \left( \lambda \left\| \sum_{k=1}^{n} \beta_{n,k}\gamma_{k} \Xi_{k+1} \right\| \right)}{\prod_{j=1}^{n} \left( 1 + e_{j,n} \right)} \quad \quad \text{and} \quad \quad G_{0} = 1
\]
and since $\mathbb{E}\left[ G_{n+1} |\mathcal{F}_{n} \right] =G_{n}$, it comes $\mathbb{E}\left[ G_{n+1} \right] = 1 $. For all $r > 0$,
\begin{align*}
\mathbb{P}\left[ \left\| M_{2,n+1} \right\| \geq r \right] & = \mathbb{P}\left[ G_{n+1} \geq \frac{\cosh (\lambda r )}{\prod_{j=1}^{n}\left( 1+ e_{j,n} \right)} \right]  \leq \mathbb{P}\left[ 2G_{n+1} \geq \frac{e^{\lambda r }}{\prod_{j=1}^{n} \left( 1+ e_{jn} \right)} \right].
\end{align*}
Furthermore, let $\epsilon_{j+1} = \xi_{j+1}\ind{B_{j}} - \mathbb{E}\left[\xi_{j+1} \ind{B_{j}} | \mathcal{F}_{j} \right]$ and remark that $\mathbb{E}\left[ \left\| \epsilon_{j+1} \right\|^{2} |\mathcal{F}_{j} \right] \leq 2C$. Then, recalling that $\delta_{n} = \gamma_{n}^{-1/2}(\ln n)^{-1/2}$, and since for all $k \geq 2$, 
\[
\mathbb{E}\left[ \left\| \epsilon_{j+1}\right\|^{k} |\mathcal{F}_{j} \right] \leq 2^{k-2}\delta_{j}^{k-2} \mathbb{E}\left[ \left\| \xi_{j+1} \right\|^{2} \ind{B_{j}} |\mathcal{F}_{j} \right] \leq 2^{k-1}C\delta_{j}^{k-2} ,
\]
\begin{align*}
e_{j,n}  = \sum_{k=2}^{\infty} \lambda^{k} \left\| \beta_{n,j}\right\|_{op}^{k} \gamma_{j}^{k} \mathbb{E}\left[ \left\|  \Xi_{j+1} \right\|^{k} |\mathcal{F}_{j} \right]  
& \leq \sum_{k=2}^{\infty} \lambda^{k} \left\| \beta_{n,j}\right\|_{op}^{k} \gamma_{j}^{k} v_{j}^{k} \mathbb{E}\left[ \left\| \epsilon_{j+1} \right\|^{k} |\mathcal{F}_{k} \right] \\
& \leq \sum_{k=2}^{\infty} \lambda^{k} \left\| \beta_{n,j}\right\|_{op}^{k} \gamma_{j}^{k} v_{j}^{k} 2^{k-1}C\delta_{j}^{k-2} \\
& \leq 2C\lambda^{2} \left\| \beta_{n,j} \right\|_{op}^{2} \gamma_{j}^{2}v_{j}^{2} \sum_{k=2}^{\infty} (2\lambda)^{k-2} \left\| \beta_{n,j}\right\|_{op}^{k-2}\gamma_{j}^{\frac{k-2}{2}} v_{j}^{k-2}\ln j^{-\frac{k-2}{2}} \\
& = 2C\lambda^{2} \left\| \beta_{n,j} \right\|_{op}^{2} \gamma_{j}^{2}v_{j}^{2} \exp \left( 2 \lambda \left\| \beta_{n,j}\right\|_{op}\sqrt{\gamma_{j}} v_{j} \right)
\end{align*}
Then,
\[
\mathbb{P}\left[ \left\| M_{2,n+1} \right\| \geq r \right] \leq \mathbb{P}\left[ 2G_{n+1} \geq \frac{e^{\lambda r}}{\prod_{j=1}^{n}\left( 1+  2C\lambda^{2} \left\| \beta_{n,j} \right\|_{op}^{2} \gamma_{j}^{2} v_{j}^{2}\exp \left( 2 \lambda \left\| \beta_{n,j}\right\|_{op} v_{j}\sqrt{\gamma_{j}\ln j} \right) \right) } \right]
\]
Applying Markov's inequality, 
\[
\mathbb{P}\left[ \left\| M_{2,n+1} \right\| \geq r \right] \leq 2 \exp \left( - \lambda r + 2C\lambda^{2} \sum_{j=1}^{n} \left\| \beta_{n,j} \right\|_{op}^{2} \gamma_{j}^{2}v_{j}^{2} \exp \left( 2 \lambda \left\| \beta_{n,j}\right\|_{op}v_{j}\sqrt{\gamma_{j}\ln j} \right) \right) .
\]
Take $\lambda = \gamma_{n}^{-1/2}v_{n}^{-1}\sqrt{\ln n}$. Let $C_{0} = \left\| \beta_{n_{0},0} \right\|_{op}$ and remark that for $n \geq 2n_{0}$ (i.e such that $\gamma_{n/2} \lambda_{\max}(\Gamma) \leq 1$),  and for all $j \leq n/2$,
\[
\left\| \beta_{n,j} \right\|_{op} \leq C_{0}\exp \left( - c\lambda_{\min}(n/2)^{1-\alpha} \right),
\]
so that for all $j \leq n/2$,
\[
\lambda \left\|\beta_{n,j} \right\|_{op} \gamma_{j}v_{j} \leq C_{0}\exp \left( - \lambda_{\min}(n/2)^{1-\alpha} \right)\sqrt{n^{2b+\alpha}\ln n} \limite{n\to + \infty}{a.s} 0 .
\]
Furthermore, for all $n \geq 2n_{0}$, and for all $j \geq n/2$, 
\[
\lambda \left\|\beta_{n,j} \right\|_{op} \frac{\sqrt{\gamma_{j}}v_{j}}{\sqrt{\ln j}} \leq C_{0} 2^{2b+\alpha+1}.
\]
Then, there is a positive constant $C''$ such that for all $n \geq 1$ and $j \leq n$,
\[
\exp \left( \lambda \left\|\beta_{n,j} \right\|_{op} \sqrt{\gamma_{j}}v_{j} \right) \leq C''
\]
Finally, one can easily check that (see Lemma E.2 in \cite{CG2015})
\[
\sum_{j=1}^{n} \left\| \beta_{n,j}\right\|_{op}^{2} \gamma_{j}^{2} \frac{(\ln j)^{2a}}{j^{2b}} =\mathcal{O} \left( \frac{(\ln n)^{2a}}{n^{2b+\alpha}} \right) .
\]
There is a positive constant $C'''$ such that
\[
\mathbb{P}\left[ \left\| M_{2,n+1} \right\| \geq r \right] \leq \exp \left( - r v_{n}^{-1}\gamma_{n}^{-1/2}\sqrt{\ln n} + C''' \ln n \right)
\]
Then , taking $r = \left( 2+C''' \right) v_{n} \sqrt{\gamma_{n}\ln n}$, it comes
\[
\mathbb{P}\left[ \left\| M_{2,n+1} \right\| \geq \left( 2+C''' \right) v_{n} \sqrt{\gamma_{n}\ln n} \right] \leq \exp \left( - 2 \ln n \right) = \frac{1}{n^{2}}
\]
and applying Borell Cantelli's lemma,
\[
\left\| M_{2,n+1} \right\| = \mathcal{O} \left( v_{n}\sqrt{\gamma_{n}\ln n}  \right) \quad a.s.
\]

\noindent\textbf{Bounding $M_{3,n+1} := \sum_{k=1}^{n} \beta_{n,k} \gamma_{k}  R_{n} \left( \xi_{k+1}\ind{C_{k+1}} - \mathbb{E}\left[ \xi_{k+1} \ind{C_{k+1}} |\mathcal{F}_{k} \right] \right)$. } Let us denote $\epsilon_{k+1} = \xi_{k+1}\ind{C_{k+1}} - \mathbb{E}\left[ \xi_{k+1} \ind{C_{k+1}} |\mathcal{F}_{k} \right]$ and remark that for $n \geq n_{0} $,
\begin{align*}
\mathbb{E}\left[ \left\| M_{3,n+1} \right\|^{2} |\mathcal{F}_{n} \right] & \leq  \left( 1- \lambda_{\min} \gamma_{n} \right) \left\| M_{3,n} \right\|^{2} +  \gamma_{n}^{2} v_{n}^{2} \mathbb{E}\left[ \left\| \epsilon_{n+1} \right\|^{2} |\mathcal{F}_{n} \right] \\
& \leq \left( 1- \lambda_{\min} \gamma_{n} \right) \left\| M_{3,n} \right\|^{2} +  \gamma_{n}^{2}v_{n}^{2} \mathbb{E}\left[ \left\| \xi_{n+1} \right\|^{2} \ind{\left\| \xi_{n+1} \right\|^{2} \geq \gamma_{n}^{-1}} |\mathcal{F}_{n} \right]
\end{align*}
Let $V_{n}' = \frac{1}{\gamma_{n}v_{n}^{2}} \left\| M_{3,n} \right\|^{2}$. There are a rank $n_{1}$ and a positive constant $c$ such that for all $n \geq n_{1}$
\[
\mathbb{E}\left[ V_{n+1} |\mathcal{F}_{n} \right] \leq \left( 1- c\gamma_{n} \right) V_{n} + \mathcal{O} \left( \gamma_{n}\mathbb{E}\left[ \left\| \xi_{n+1} \right\|^{2} \ind{\left\| \xi_{n+1} \right\|^{2} \geq \gamma_{n}^{-1}} |\mathcal{F}_{n} \right] \right)  \quad a.s.
\]
Applying Robbins-Siegmund Theorem as well as equation \eqref{hypmart}, it comes
\[
\left\| M_{3,n+1} \right\|^{2} = \mathcal{O} \left( \gamma_{n}v_{n}^{2} \right) \quad a.s.
\]

\end{proof}

\subsection{A useful theorem for averaged stochastic algorithms}
\begin{theo}\label{theomartmoy}
Let $H$ be a separable Hilbert space and let
\[
M_{n} = \frac{1}{n}\sum_{k=1}^{n} R_{k} \xi_{k+1},
\]
where 
\begin{itemize}
\item $\left( \xi_{n} \right)$ is a $H$-valued martingale differences sequence adapted to a filtration $\left( \mathcal{F}_{n} \right)$ verifying
\[
\mathbb{E}\left[ \left\| \xi_{n+1} \right\|^{2} |\mathcal{F}_{n} \right] \leq C + R_{2,n}
\]
where $(R_{2,n})_n$ converges almost surely to $0$.
\item $(R_{n})$ is a sequence of operators on $H$ such that for a deterministic sequence $\left( v_{n} \right)$,
\[
\left\| R_{n} \right\|_{op} = \mathcal{O} \left( v_{n} \right)\quad a.s  \quad \text{and} \quad \exists c \leq 1,(a_{n}), \quad \frac{v_{n}}{v_{n+1}} = 1 +  \frac{c}{n} +  \frac{a_{n}}{n} + o \left( \frac{a_{n}}{n} \right), 
\]
with $(a_{n})$ converging to $0$.
\end{itemize}
Then, for all $\delta > 0$,
\begin{itemize}
\item If $\sum_{n\geq 1} v_{n}^{2} < + \infty \quad a.s$, 
\[
\left\| M_{n}^{2} \right\|^{2} = \mathcal{O} \left( \frac{1}{n^{2}} \right) \quad a.s.
\]
\item If $c < 1/2$, 
\[
\left\| M_{n} \right\|^{2} = o \left( n^{-1}v_{n}^{-2}(\ln n)^{1+\delta} \right) \quad a.s.
\]
\item If $\sum_{n\geq 1}\frac{a_{n}}{n} < + \infty$ and if $1/2 \leq c \leq 1$, 
\[
\left\| M_{n} \right\|^{2}= o \left( n^{2c-2}v_{n}^{-2}(\ln n)^{1+\delta} \right) \quad a.s
\]
\item If $\sum_{n\geq 1}\frac{a_{n}}{n} = + \infty$ and if $1/2 \leq c < 1$, for all $a<2-2c$ 
\[
\left\| M_{n} \right\|^{2} = o \left( n^{-a}v_{n}^{-2} \right) \quad a.s.
\]

\end{itemize}

\end{theo}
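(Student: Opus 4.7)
The plan is to set $N_n = \sum_{k=1}^{n} R_k \xi_{k+1}$, so that $M_n = N_n/n$. Since $(R_k)$ is adapted to $(\mathcal{F}_k)$ and $(\xi_k)$ is a sequence of martingale differences, $(N_n)$ is an $H$-valued martingale whose conditional quadratic variation is controlled by
\[
\mathbb{E}\bigl[\|N_{k+1}-N_k\|^{2}\mid\mathcal{F}_k\bigr]\le \|R_k\|_{op}^{2}\bigl(C+R_{2,k}\bigr)=\mathcal{O}\bigl(v_k^{2}\bigr)\quad \text{a.s.}
\]
The whole proof consists in turning this control on $\langle N\rangle_n$ into an almost sure rate for $\|N_n\|$ and then dividing by $n^{2}$.

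For the first case, $\sum_{n\ge 1}v_n^{2}<+\infty$ a.s. forces $\sum_{n\ge 1}\mathbb{E}\bigl[\|N_{n+1}-N_n\|^{2}\mid\mathcal{F}_n\bigr]<+\infty$ a.s., hence the martingale convergence theorem in the Hilbert space $H$ (e.g.\ Duflo, Chapter 1) gives that $N_n$ converges almost surely to a finite random variable, so $\|N_n\|^{2}=\mathcal{O}(1)$ a.s. and $\|M_n\|^{2}=\mathcal{O}(1/n^{2})$ a.s.\ as claimed. For the remaining three cases, $\sum v_n^{2}$ diverges and I would call on a strong law of large numbers for $H$-valued martingales of Chow type (as already used throughout the paper, e.g.\ for \eqref{vitXi}), which yields, for every $\delta>0$,
\[
\|N_n\|^{2}=o\Bigl(A_n(\ln A_n)^{1+\delta}\Bigr)\quad \text{a.s.},\qquad A_n:=\sum_{k=1}^{n}\|R_k\|_{op}^{2}\bigl(C+R_{2,k}\bigr).
\]

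The core of the remaining work is then to translate the ratio assumption $v_n/v_{n+1}=1+c/n+a_n/n+o(a_n/n)$ into sharp asymptotics for $A_n$ expressed back in the variables $(n,v_n)$. Iterating the ratio and using Kronecker/Abel summation shows that $v_n$ behaves like $n^{-c}$ up to a slowly varying factor governed by $a_n$. In the sub-critical range $c<1/2$ one gets $A_n\sim n v_n^{2}/(1-2c)$, which after division by $n^{2}$ produces the bound announced in the second item. In the critical and supercritical range $1/2\le c\le 1$, the partial sum $A_n$ carries an additional power of $n$ relative to $n v_n^{2}$; when $\sum a_n/n<+\infty$ the corrections are absorbed into the $(\ln n)^{1+\delta}$ factor and one recovers the $n^{2c-2}v_n^{-2}(\ln n)^{1+\delta}$ bound of item three, while when $\sum a_n/n=+\infty$ the slow variation degrades the rate to $n^{-a}v_n^{-2}$ for every $a<2-2c$, as in item four. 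Once the right asymptotics of $A_n$ are available, the four rates for $\|M_n\|^{2}=\|N_n\|^{2}/n^{2}$ follow directly from the Chow bound.

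The main obstacle will be the fine bookkeeping in this last step. The ratio assumption is flexible enough to allow genuine logarithmic corrections (as in the application to Corollary~\ref{estsigma}, where $v_k^{2}\sim \ln k/k$), so one has to be careful when transferring from $A_n$ back to $v_n^{-2}$ and $n$: splitting the sum near the boundaries $c=1/2$ and $c=1$, as in Lemma~E.2 of \cite{CG2015}, and a separate Robbins–Siegmund argument on $V_n=\|N_n\|^{2}/w_n^{2}$ for a well-chosen deterministic sequence $w_n$ if Chow's bound is too crude near $c=1$. I expect the supercritical regime of item four to be the most delicate, since there the slowly varying factor $a_n$ drives the exponent and the loss $n^{-a}$ has to be shown to hold for every $a<2-2c$ without an $(\ln n)^{1+\delta}$ remainder.
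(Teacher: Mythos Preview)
Your outline is sound, but the paper takes a more direct route. Rather than passing to the unscaled martingale $N_n=\sum_{k\le n}R_k\xi_{k+1}$, applying a Chow-type strong law, and then translating the asymptotics of $A_n\asymp\sum_{k\le n}v_k^2$ back to $(n,v_n)$ via Karamata-style arguments, the paper applies Robbins--Siegmund directly to a rescaling of $\|M_n\|^2$. From $(n+1)M_{n+1}=nM_n+R_n\xi_{n+1}$ it sets $V_{n,a}=n^{a}v_n^{-2}(\ln n)^{-1-\delta}\|M_n\|^{2}$ and obtains
\[
\mathbb{E}\bigl[V_{n+1,a}\mid\mathcal{F}_n\bigr]\le\Bigl(1-\tfrac{2-a-2c}{n}+O\bigl(\tfrac{a_n}{n}\bigr)+O\bigl(\tfrac{1}{n^2}\bigr)\Bigr)V_{n,a}+O\bigl(n^{a-2}(\ln n)^{-1-\delta}\bigr),
\]
so the four cases fall out simply by choosing $a=1$, $a=2-2c$, or any $a<2-2c$, according to what makes both the multiplicative perturbation and the additive term summable. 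The ratio hypothesis $v_n/v_{n+1}=1+c/n+a_n/n+\cdots$ enters only through the single factor $v_n^2/v_{n+1}^2$ in this recursion, and the partial sums $\sum v_k^2$ never appear; the ``fine bookkeeping'' you flag as the main obstacle in your plan is precisely what the paper's argument bypasses. Your Chow route would also succeed and in places gives slightly sharper constants, but at the cost of that extra regular-variation analysis --- and the Robbins--Siegmund fallback you mention for the borderline cases is in fact the paper's entire proof. (Incidentally, the statement writes $v_n^{-2}$ in the last three items while the paper's proof concludes with $v_n^{2}$; your computation via $A_n$ would naturally yield $v_n^{2}$ as well.)
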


\begin{proof}
If $\sum_{n\geq 1} v_{n}^{2} < + \infty$, let us consider $W_{n} = n^{2} \left\| M_{n}\right\|^{2}$. We have
\[
\mathbb{E}\left[ W_{n+1} |\mathcal{F}_{n} \right] \leq W_{n} + \left\| R_{n} \right\|_{op}  \mathbb{E}\left[ \left\| \xi_{n+1} \right\|^{2} |\mathcal{F}_{n} \right] .
\]
Then,
\[
\mathbb{E}\left[ W_{n+1} |\mathcal{F}_{n} \right] \leq W_{n} + \mathcal{O} \left( v_{n+1}^{2} \right) \quad a.s
\]
and applying Robbins-Siegmund Theorem,
\[
\left\| M_{n} \right\|^{2} = \mathcal{O} \left( \frac{1}{n^{2}} \right) .
\]
Let us consider $a \leq 1$ and $V_{n+1,a} = \frac{(n+1)^{a}}{v_{n+1}^{2}(\ln (n+1))^{1+\delta}} \left\| M_{n+1} \right\|^{2}$. Then,
\begin{align*}
\mathbb{E}\left[ V_{n+1,a} |\mathcal{F}_{n} \right] & =  \frac{(n+1)^{a}}{(\ln (n+1))^{1+\delta}} \left( \frac{n}{n+1} \right)^{2}\frac{v_{n}^{2}}{v_{n+1}^{2}}\left\| M_{n} \right\|^{2} + \frac{(n+1)^{a}}{(\ln (n+1))^{1+\delta}}\frac{1}{(n+1)^{2}} \frac{\left\| R_{n} \right\|_{op}}{v_{n+1}^{2}} \mathbb{E}\left[ \left\| \xi_{n+1} \right\|^{2} |\mathcal{F}_{n} \right] \\
& \leq \left( \frac{n}{n+1}\right)^{2-a}\frac{v_{n+1}^{2}}{v_{n}^{2}} 	 V_{n} + \mathcal{O} \left( \frac{1}{n^{2-a}(\ln n)^{1+\delta}} \right) \quad a.s \\
& = \left( 1- (2-a -2c) \frac{1}{n} + \frac{a_{n}}{n} + \mathcal{O} \left( \frac{1}{n^{2}} \right) \right) V_{n} + \mathcal{O} \left( \frac{1}{n^{2-a}(\ln n)^{1+\delta}} \right) \quad a.s
\end{align*}
Applying Robbins-Siegmund theorem, 
\begin{itemize}
\item If $c < 1/2$, one can take $a= 1$ and
\[
\left\| M_{n} \right\|^{2} = o \left( n^{-1}v_{n}^{2}(\ln n)^{1+\delta} \right) \quad a.s.
\]
\item If $\sum_{n\geq 1}\frac{a_{n}}{n} < + \infty$ and if $1/2 \leq c \leq 1$, one can take $a=2-2c$ and
\[
\left\| M_{n} \right\|^{2}= o \left( n^{2c-2}v_{n}^{2}(\ln n)^{1+\delta} \right) \quad a.s
\]
\item If $\sum_{n\geq 1}\frac{a_{n}}{n} = + \infty$ and if $1/2 \leq c < 1$, for all $a<2-2c$ 
\[
\left\| M_{n} \right\|^{2} = o \left( n^{-a}v_{n}^{2}(\ln n)^{1+\delta} \right) \quad a.s.
\]

\end{itemize}

\end{proof}

\bibliographystyle{apalike}
\bibliography{biblio_redaction_2}

\end{document}